\DeclarePairedDelimiter\floor{\lfloor}{\rfloor}
\newtheorem{definition}{Definition}
\newtheorem{lemma}{Lemma}
\newtheorem{proposition}{Proposition}
\newtheorem{theorem}{Theorem}
\newtheorem{corollary}{Corollary}
\newtheorem{remark}{Remark}
\renewcommand{\v}[1]{\boldsymbol{#1}}
\newcommand{\m}[1]{\mathbf{#1}}
\newcommand{\idef}{\stackrel{\mathrm{def}}{=}}
\title{\vspace{-3mm}Kernel Density Estimation with Linked Boundary Conditions}
\author{Matthew J. Colbrook
\thanks{Department of Applied Mathematics and Mathematical Physics, University of Cambridge, Wilberforce Road, Cambridge CB3 0WA, UK. Email: {m.colbrook@damtp.cam.ac.uk}}, Zdravko I. Botev
 \thanks{School of Mathematics and Statistics, The University of New South Wales, Sydney, NSW 2052, Australia.},	Karsten Kuritz
\thanks{Institute for Systems Theory and Automatic Control, University of Stuttgart, 70569 Stuttgart, Germany.},	Shev MacNamara
  \thanks{ARC Centre of Excellence for Mathematical and Statistical Frontiers, School of Mathematical and Physical Sciences, University of Technology Sydney, NSW 2007,  Australia.}
}
\date{}
\begin{document}

\maketitle

\vspace{-10mm}

\begin{abstract}
Kernel density estimation on a finite interval poses an outstanding challenge because of the well-recognized bias at the boundaries of the interval. Motivated by an application in cancer research, we consider a boundary constraint linking the values of the unknown target density function at the boundaries. We provide a kernel density estimator (KDE) that successfully incorporates this linked boundary condition, leading to a non-self-adjoint diffusion process and expansions in non-separable generalized eigenfunctions. The solution is rigorously analyzed through an integral representation given by the unified transform (or Fokas method). The new KDE possesses many desirable properties, such as consistency, asymptotically negligible bias at the boundaries, and an increased rate of approximation, as measured by the AMISE. We apply our method to the motivating example in biology and provide numerical experiments with synthetic data, including comparisons with state-of-the-art KDEs (which currently cannot handle linked boundary constraints). Results suggest that the new method is fast and accurate. Furthermore, we demonstrate how to build statistical estimators of the boundary conditions satisfied by the target function without apriori knowledge. Our analysis can also be extended to more general boundary conditions that may be encountered in applications.
\end{abstract}

\vspace{2mm}

\noindent{}\textit{Keywords:} density estimation, diffusion, unified transform, linked boundary
conditions, boundary bias, biological cell cycle.

\section{Introduction and Background}
\label{sec:Introduction}

Suppose we are given an independent and identically distributed sample $X_1,\ldots,X_n$ from some unknown density function $f_X$. Throughout, we will use a subscript $X$ in $f_X$ to indicate that $f_X$ is the probability density function of the random variable $X$. We will also denote expectation and variance with respect to $f_X$ by $\mathbb{E}_{f_X}$ and $\mathrm{Var}_{f_X}$ respectively. Estimating the density $f_X$ is one of the most common problems for discovering patterns in statistical data \cite{chevallier2017kernel,silverman2018density,simonoff2012smoothing}. When the support of $f_X$ is the whole real line, a simple and popular non-parametric method for estimating $f_X$ is the kernel density estimator (KDE)
\begin{equation}
\label{standard}
\widehat f(x;t)=\frac{1}{n\sqrt{t}}\sum_{k=1}^n \varphi\left(\frac{x-X_k}{\sqrt{t}}\right) , 
\end{equation}
with a kernel $\varphi(x)$. A common choice is a Gaussian kernel $\varphi(x)=\exp(-x^2/2)/\sqrt{2\pi}$. Here $\sqrt{t}$ is the so-called bandwidth parameter that controls the smoothness of the estimator (see, for example, \cite{wand1994kernel,tsybakov2008introduction,silverman2018density,simonoff2012smoothing} and references therein). Another viewpoint is to connect kernel density estimation to a diffusion equation, an approach pioneered by the second author in \cite{Botev2010}. Our goal in this article is to extend this analysis to linked boundary conditions. A key tool in our analysis is the unified transform (also known as the Fokas method), a novel transform for analyzing boundary value problems for linear (and integrable non-linear) partial differential equations \cite{fokas1997,fokas2002integrable,fokas2005transform,fokasbook2008,trogdon2012solution,deconinck2014non,deconinck2017fokas,colbrook2018fokas,colbrook2019unified,colbrook2019spectral,colbrook2019hybrid,colbrook2020extending,sheils2020time}. An excellent pedagogical review of this method can be found in the paper of Deconinck, Trogdon \& Vasan \cite{deconinck2014method}.

It is well-known that $\widehat f(x;t)$ is not an appropriate kernel estimator when $f_X$ has compact support \cite{geenens2014probit}, which (without loss of generality) we assume to be the unit interval $[0,1]$. The main reason for this is that $\widehat f(x;t)$ exhibits significant boundary bias at the end-points of the interval. For example, with a Gaussian kernel, no matter how small the bandwidth parameter, $\widehat f(x;t)$ will have non-zero probability mass outside the interval $[0,1]$. Various solutions have been offered to cope with this boundary bias issue, which may be classified into three main types:
\begin{enumerate}[label=(\alph*)]
\item Using special (non-Gaussian) kernels with support on $[0,1]$ or on $[0,\infty)$, as in \cite{chen1999beta,malec2014nonparametric,scaillet2004density};
\item Adding bias-correction terms to $\widehat f(x;t)$ as in \cite{dai2010simple,karunamuni2005boundary};
\item Employing domain transformations \cite{geenens2014probit,marron1994transformations}, which work by mapping the data to $(-\infty,\infty)$, constructing a KDE on the whole real line, and finally mapping the estimate back to $[0,1]$. 
\end{enumerate}

Additionally, sometimes we not only know that $f_X$ has support on $[0,1]$, but also have extra information about the values of $f_X$ at the boundaries. One example of this situation is what we will refer to as a \emph{linked boundary condition}, where we know apriori that 
\[
f_X(0)=r f_X(1)
\]
for some known given parameter $r\geq0$. Most of our analysis also carries over to complex $r$, as long as $r \neq -1$ (the PDE (\ref{eq:pde:candidate}) is degenerate irregular and the problem ill-posed when $r=-1$), but we focus on $r\geq0$ since in statistics $f_X\geq 0$. An example that motivated the current article arises in the field of biology \cite{Kuritz2017,Kuritz2020}, in particular cell cycle studies in cancer research. The cell cycle itself is one of the fundamentals of biology and knowledge about its regulation is crucial in the treatment of various diseases, most prominently cancer. Cancer is characterized by an uncontrolled cell growth and commonly treated with cytotoxic drugs. These drugs interfere with the cell cycle and in this way cause cancer cells to die. By studying the effect of chemicals on the cell cycle one can discover new drugs, identify potential resistance mechanisms or evaluate combinatorial therapy. These kind of studies have benefited from continued improvement in cell population analysis methods like fluorescence microscopy, flow cytometry, CyTOF or single-cell omics, where the abundance of up to thousands of cellular components for every individual cell in a population is measured. In such an experiment, cells in an unsynchronized cell population are spread over all stages of the cell cycle. Trajectory inference algorithms then reduce the dimensionality to a pseudotime scale by ordering cells in the population based on their similarity in the dataset \cite{Saelens2019}. Subsequently, mathematical methods based on ergodic principles infer molecular kinetics in the cell cycle from the distribution of cells in pseudotime. The value at the left boundary of this distribution must, because of cell division, be double the value at the right boundary. In other words, we have linked boundary conditions with the constant $r = 2$, but otherwise, we do not know the value of the density at the boundaries of the domain. The problem is described in more detail in Section~\ref{num_exam}, where we also demonstrate the estimator with linked boundary condition on a real dataset. In particular, for this example, respecting the linked boundary condition is crucial for generating the correct kinetics due to a certain mapping between pseudotime and real time. See also \cite{Kuritz2017,Kuritz2020}, for example, for further motivation and discussion. In other applications, even if we do not know the value of $r$, one can approximate the true value of $r$ which, together with the methods proposed in this article, leads to an increase in the rate of approximation of $f_X$ as the sample size $n$ becomes large (we do this for an example in Section \ref{synth_num_exam}, see also \S \ref{asymp_properties} for some results in this direction).

Unfortunately, to the best of our knowledge, all of the currently existing kernel density estimation methods, bias-correcting or not, cannot satisfactorily handle the linked boundary condition. Figure~\ref{fig:ksdensity:BAD} shows a typical example of what can go wrong when a standard density estimator is applied to real biological data.
The result is a smooth density with \textit{two unacceptable features}:
\begin{itemize}
\item The domain $x \in [0,1]$ is not respected, and instead the solution has positive density for negative values of $x$, and also for $x>1$, which are physically unreasonable. This problem can be addressed using existing bias-correction methods and is not the challenge that we had to overcome in this article.
\item The density does not respect the important biological constraint of the linked boundary condition (that the left value should be double the right, in this particular application), and instead the density decays to zero as $|x|$ becomes large. Existing bias-correction methods do not address this problem.  
\end{itemize}

\begin{figure}
\centering
    \includegraphics[scale=0.5]{./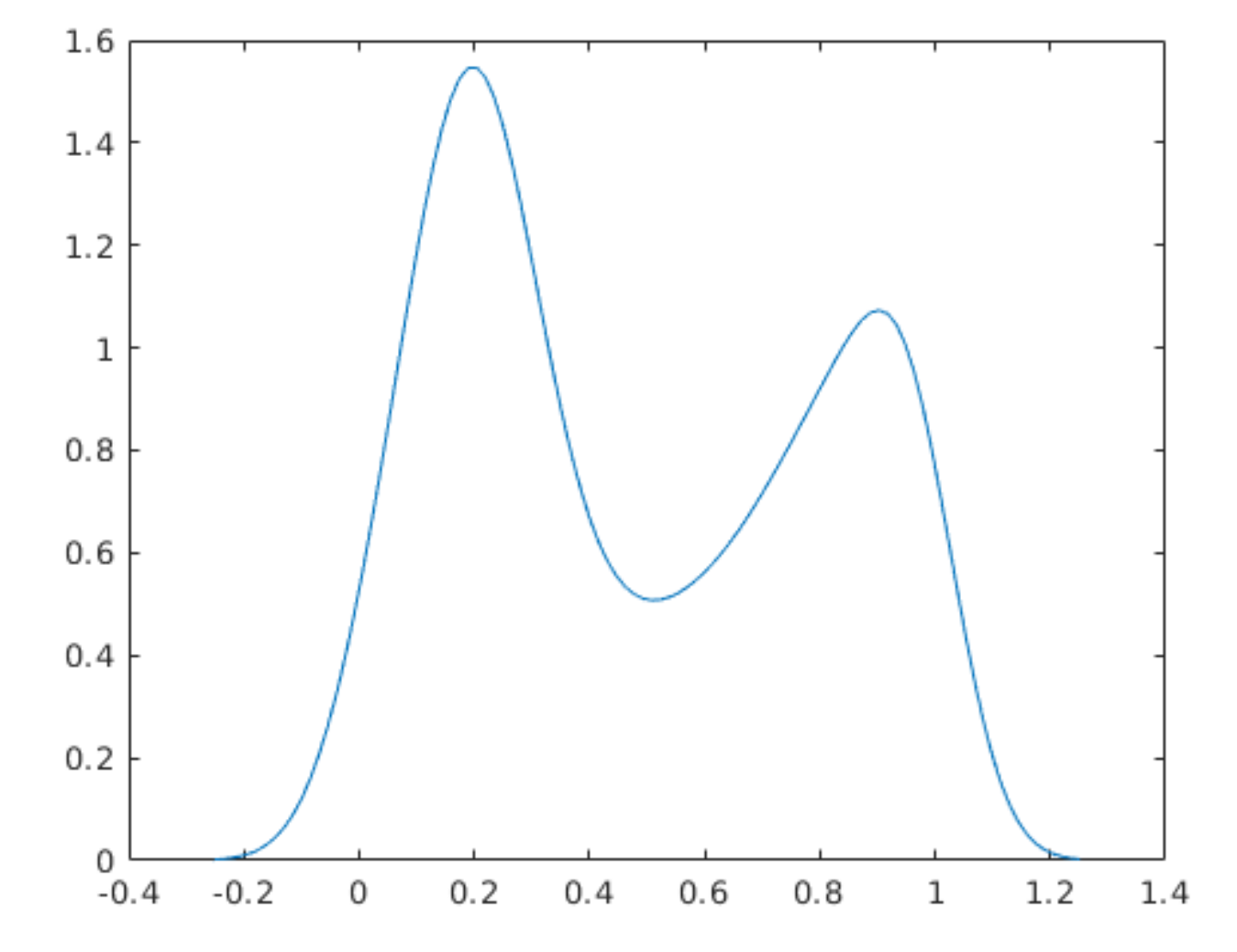}
    \caption{A typical example of output from a KDE (\texttt{ksdensity} from MATLAB) applied to our real biological data. This does not respect the domain, and it also does not respect the important linked boundary conditions. The methods that we propose in this article address those issues simultaneously, with results for this data set shown in Figure \ref{fig:Example}.}
    \label{fig:ksdensity:BAD}
\end{figure}

The purpose of this article is to describe a new KDE that can handle the more general problem of linked boundary conditions with an arbitrary value of $r$; the situation of interest in the biological application where $r=2$ is then solved as an important special case. Figure~\ref{fig:Example} (C) shows a successful application of our proposed method. The MAPiT toolbox for single-cell data analysis \cite{Kuritz2020} applies our new KDE with linked boundary conditions to analyze cell cycle dependent molecular kinetics.

Our proposed estimator is of type (a), that is, we construct a special kernel with support on $[0,1]$, and such that the linked boundary condition is incorporated into the resulting estimator. 
Our kernel is inspired by the solution of a diffusion-type PDE \cite{agarwal2010data,Botev2010,mehmood2016clustering,santhosh2013bivariate,xu2015estimating}. In particular, we modify the diffusion model in \cite{Botev2010} so that it satisfies the linked boundary conditions. Unlike the case in \cite{Botev2010}, the non-self-adjoint initial-boundary problem that arises cannot be diagonalized, meaning the solution cannot be expressed as a series solution of eigenfunctions of the spatial differential operator in the usual sense. Instead, we use the unified transform, which provides an algorithmic recipe for solving these types of problems via an integral solution. This was the way we first found the solution formula to our diffusion model, and the integral representation simplifies many of the proofs in our analysis. So far, the only case of our problem considered in the literature on this method has been $r=1$ \cite{trogdon2012solution} (periodic). For the heat equation with oblique Robin boundary conditions/non-local boundary conditions we refer the reader to \cite{mantzavinos2013unified,miller2018diffusion,pelloni2018nonlocal} and for interface problems we refer the reader to \cite{sheils2014heat,deconinck2015interface,sheils2015interface}. Recently linked boundary conditions have been considered for the Schr\"odinger equation in \cite{olver2019revivals} (however, in \cite{olver2019revivals}, the parameters were chosen such that the characteristic values were simple, in other words the eigenvalues were simple, making the analysis easier and leading to a series solution in terms of bona fide eigenfunctions).

We then construct a series expansion in non-separable generalized eigenfunctions of the spatial derivative operator by deforming the contours in the integral representation and applying Cauchy's residue theorem. This formal solution is then rigorously verified and studied via a non-symmetric heat kernel. Each of these representations (integral and series) is beneficial for different analysis. For instance, the integral representation is much easier to construct and makes it easier to study regularity properties, as well as some parts of the behavior as $t\downarrow0$, whereas the kernel representation is useful for proving conservation of mass (the solution generates a true probability measure) and studying the asymptotic mean integrated squared error (AMISE). Although it is not the goal of the present article, we envisage that the method that we demonstrate here can also be generalized to the multivariate case and to scenarios where other types of boundary conditions (such as linked derivatives or on-local boundary conditions) arise or can be estimated. In these situations, we recommend using the unified transform to find the solution of the resulting PDE. For numerical implementation of the unified transform, we refer the reader to \cite{de2019hybrid}.

We also consider the discrete counterpart of the continuous model for two reasons. First, it is a numerical approximation to the continuous model and a useful way to compute the solution of the PDE. Second, the discrete model is relevant when we deal with data which is already pre-binned.

The rest of the article is organized as follows. In the next section, we describe the continuous model for the application at hand. Our results provide the necessary assurances that the PDE model is a valid and accurate density estimator. We then discuss the issue of choosing an optimal bandwidth (stopping time for the PDE model), including pointwise bias, asymptotic properties and the AMISE. We briefly discuss numerical methods for calculating the estimator and, in particular, a discretized version of the continuous PDE, which we prove converges to the unique continuous solution. Finally, the new method is applied to a real dataset from a biological application in Section~\ref{num_exam}, and we also provide an illustrative set of examples with synthetic datasets. We compare our new estimator to several well-known methods and these results suggest that our new method is typically more accurate and faster, and that it does not suffer from boundary bias. We finish with a short conclusion.

All technical analysis and proofs are moved to the appendices to ensure that the presentation flows more easily. Freely available code for the new kernel methods is also provided at {\url{https://github.com/MColbrook/Kernel-Density-Estimation-with-Linked-BCs}}.

\section{The Continuous Linked--Boundaries Model}
\label{cts_model_properties}

In this section, we present the continuous diffusion model that satisfies the linked boundary condition and discuss the analytical properties of its solution. Our proposed diffusion model for a linked-boundary KDE is the solution of the formal PDE system:
\begin{equation}
\begin{split}
\frac{\partial f}{\partial t} &=\frac{1}{2}\frac{\partial ^2 f}{\partial x^2} ,\qquad x\in [0,1], \;\;\;t>0,\\
\mathrm{IC:}\quad \lim_{t\downarrow 0}f(\cdot,t)&=f_0,\\
\mathrm{BCs:}\quad f(0,t)&=    r  f(1,t), \quad \frac{\partial f }{\partial x} (0,t)= \frac{\partial f}{\partial x} (1,t), \quad\forall t>0.
\end{split}
 \label{eq:pde:candidate}
 \end{equation}
The boundary condition $\frac{\partial f }{\partial x} (0,t)= \frac{\partial f}{\partial x} (1,t)$ is enforced so that the solution at any time $t\geq 0$ gives a probability measure (see Theorem \ref{cons_thm}). When considering the setup described in the introduction, the initial condition is given by 
\begin{equation}
\label{IC} \textstyle
f_0=\frac{1}{n}\sum_{k=1}^n\delta_{X_k},
\end{equation}
the empirical measure of the given sample $X_1,\ldots,X_n$. In other words, $f_0$ is a sum of Dirac delta distributions. However, in our analysis we also consider more general initial conditions. Many of the existence and uniqueness theorems carry over from the well-known $r=1$ (periodic) case. In particular, the boundary conditions and PDE make sense when the initial data is given by a finite Borel measure, which we also denote by $f_0$. Sometimes we will also refer to a function $g$ as a measure through the formula $g(U)=\int_Ug(x)dx$ for Borel sets $U$. Therefore, since the initial data is a distribution, we need to be precise by what we mean when writing $\lim_{t\downarrow 0}f(\cdot,t)=f_0$.

\begin{definition}
Denote the class of finite Borel measures on $[0,1]$ by $M([0,1])$ and equip this space with the vague topology (i.e. weak$^*$ topology). We let $C^w(0,T;M([0,1]))$ denote the space of all continuous maps
\begin{align*}
&\mu:[0,T)\rightarrow M([0,1]),\\
&\mu(t)=\mu_{t}.
\end{align*}
In other words, $\mu_{t}$ is continuous as a function of $t$ in the vague topology, meaning that for any given function $g$ that is continuous on the interval $[0,1]$, the integral $\int_0^1g(x)d\mu_t(x)$ is continuous as a function of $t\in[0,T)$.
\end{definition}

We look for weak solutions of (\ref{eq:pde:candidate}). In terms of notation, we will denote the derivative with respect to $x$ by $g_x$ and use $\mu(g)$ to denote the integration of a function $g$ against a measure $\mu$. The following adjoint boundary conditions are exactly those that arise from formal integration by parts.

\begin{definition}
Let $\mathcal{F}(r)$ denote all $g\in C^{\infty}([0,1])$ satisfying the adjoint linked boundary conditions
\begin{equation}
\label{adjoint_bcs}
g(1)=g(0),\quad g_x(1)=r g_x(0).
\end{equation}
\end{definition}

\begin{definition}[Weak Solution]
Let $f_0\in M([0,1])$ such that $f_0(\{0\})=r f_0(\{1\})$. We say that $\mu\in C^w(0,T;M([0,1]))$ is a weak solution to (\ref{eq:pde:candidate}) for $t\in [0,T)$ if $\mu_0=f_0$ and for all $g\in \mathcal{F}(r)$, $\mu_t(g)$ is differentiable for $t>0$ with
\begin{equation}
\label{weak_sol}
\frac{d}{dt}\mu_t(g)=\frac{1}{2}\mu_t(g_{xx}).
\end{equation}
\end{definition}

We can now precisely state the well-posedness of (\ref{eq:pde:candidate}).

\begin{theorem}[Well-Posedness]
\label{wk_thm}
Assume our initial condition $f_0$ lies in $M([0,1])$ and satisfies $f_0(\{0\})=r f_0(\{1\})$. Then there exists a unique weak solution to (\ref{eq:pde:candidate}) for $t\in [0,T)$ for any $T\in(0,\infty]$, which we denote by $f(\cdot,t)$. For $t>0$ this weak solution is a function that is smooth in $t$ and real analytic as a function of $x$. Furthermore, the solution has the following properties which generalize the classical periodic case of $r=1$:

\begin{enumerate}
	\item If $f_0\in C([0,1])$ (the space of continuous functions on $[0,1]$), then for any $x\in (0,1)$, $f(x,t)$ converges to $f_0(x)$ as $t\downarrow0$. If $f_0(0)=r f_0(1)$ then $f(\cdot,t)$ converges to $f_0$ as $t\downarrow0$ uniformly over the whole closed interval $[0,1]$.
	\item If $1\leq p<\infty$ and $f_0\in L^p([0,1])$, then $f$ is the unique weak solution in $C(0,T;L^p([0,1]))$ and $f(\cdot,t)$ converges to $f_0$ as $t\downarrow0$ in $L^p([0,1])$.
\end{enumerate}
\end{theorem}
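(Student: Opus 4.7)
The plan is to build the solution constructively using the unified (Fokas) transform, establish the weak identity by an integration-by-parts engineered to cancel boundary terms for $g\in\mathcal{F}(r)$, and close with a duality argument against the backward adjoint problem. Because the underlying spatial operator is non-self-adjoint, standard spectral/semigroup methods do not apply directly. Applying the unified transform to (\ref{eq:pde:candidate}) with a finite Borel initial measure $f_0$ yields an integral representation of the form
\begin{equation*}
f(x,t) = \frac{1}{2\pi}\int_{\mathbb{R}} e^{ikx - k^2 t/2}\,\hat f_0(k)\, dk + \frac{1}{2\pi}\int_{\Gamma} e^{ikx - k^2 t/2}\, B(k;\hat f_0)\, dk,
\end{equation*}
where $\hat f_0(k) = \int_{[0,1]} e^{-iky}\, df_0(y)$ is entire and bounded on horizontal strips, $\Gamma$ is a contour obtained by deforming $\mathbb{R}$ into the region $\operatorname{Re}(k^2)\ge 0$, and $B$ is an explicit rational--exponential function of $k$ encoding the linked boundary data through the Fokas global relation. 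The Gaussian factor $e^{-k^2 t/2}$ then gives absolute convergence and justifies differentiation under the integral for every $t>0$, so that $f$ is $C^\infty$ in $t$ and real analytic in $x$ on $(0,1)\times(0,\infty)$, and the PDE together with both boundary conditions can be verified pointwise by the standard Fokas calculation.

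Next I would check the weak identity. For $g\in\mathcal{F}(r)$ and $t>0$, smoothness of $f(\cdot,t)$ permits integrating by parts twice:
\begin{equation*}
\int_0^1 f_{xx}(x,t)\, g(x)\, dx - \int_0^1 f(x,t)\, g_{xx}(x)\, dx = \bigl[f_x\, g - f\, g_x\bigr]_0^1,
\end{equation*}
and the right-hand side vanishes after substituting $f(0,t) = r f(1,t)$, $f_x(0,t)=f_x(1,t)$, $g(1)=g(0)$ and $g_x(1)=r g_x(0)$. Combining with $\partial_t f=\tfrac{1}{2}f_{xx}$ gives exactly (\ref{weak_sol}). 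Matching of the initial condition $\mu_0=f_0$ and the continuity claims of items 1--2 then follow from showing that the kernel $K(x,y,t)$ defined by $f(x,t)=\int_0^1 K(x,y,t)\, df_0(y)$ is a generalised heat kernel: continuous on $(0,1)\times[0,1]\times(0,\infty)$, an approximation of the identity as $t\downarrow 0$, and well behaved in the relevant $L^p$ norms.

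For uniqueness, suppose $\mu\in C^w(0,T;M([0,1]))$ is a weak solution with $\mu_0=0$. Given any terminal datum $\phi\in\mathcal{F}(r)$, I would use the unified transform again --- now applied to the adjoint spatial operator, whose boundary conditions are precisely the original ones of (\ref{eq:pde:candidate}) --- to construct a classical solution $g(x,s)$ of the backward problem $g_s + \tfrac{1}{2}g_{xx}=0$ on $[0,T]$ with $g(\cdot,T)=\phi$ and $g(\cdot,s)\in\mathcal{F}(r)$ for every $s\in[0,T]$. Applying (\ref{weak_sol}) then gives $\tfrac{d}{ds}\mu_s(g(\cdot,s)) = \mu_s(g_s) + \tfrac{1}{2}\mu_s(g_{xx}) = 0$, so $\mu_T(\phi)=\mu_0(g(\cdot,0))=0$. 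Since $\mathcal{F}(r)$ separates points of the class of measures in which $\mu_T$ lives (once one accounts for the endpoint constraint enforced by (\ref{weak_sol}) with constant test functions), this forces $\mu_T\equiv 0$, and $T$ was arbitrary.

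The hardest step I anticipate is the small-$t$ analysis. Because the spatial operator is non-self-adjoint the zeros of the boundary determinant need not be simple and generalised rather than bona fide eigenfunctions appear, so classical spectral arguments are unavailable. Controlling the contour integrals in the Fokas formula to show that $K(\cdot,y,t)$ concentrates at $y$ at the correct rate --- uniformly up to the boundary precisely when the compatibility $f_0(0)=r f_0(1)$ holds --- is the main technical burden, and it is where the interplay between the original boundary conditions and the adjoint class $\mathcal{F}(r)$ genuinely enters.
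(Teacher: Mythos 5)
Your construction, regularity, and weak-identity steps follow essentially the same route as the paper (unified transform representation, differentiation under the integral thanks to the Gaussian factor, integration by parts against $g\in\mathcal{F}(r)$), and like the paper you defer the genuinely technical part --- the approximation-of-identity behaviour of the kernel as $t\downarrow0$, pointwise, uniformly under the compatibility condition, and in $L^p$ --- to a separate analysis; that is fine as a plan. Your uniqueness argument, however, takes a different route (duality against a backward adjoint solution, versus the paper's expansion of $m_t(g)$ in generalized eigenfunctions of the Birkhoff-regular adjoint operator and solving the resulting ODEs along Jordan chains), and it is here that there is a genuine gap. The class $\mathcal{F}(r)$ does \emph{not} separate the relevant measures: every $\phi\in\mathcal{F}(r)$ satisfies $\phi(0)=\phi(1)$, so the measure $\delta_0-\delta_1$ is annihilated by all of $\mathcal{F}(r)$. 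Hence your duality step only yields $\mu_T=a(T)\,(\delta_0-\delta_1)$ for some scalar $a(T)$, and your proposed fix --- ``the endpoint constraint enforced by (\ref{weak_sol}) with constant test functions'' --- gives nothing, since for constant $g$ both $m_t(g)$ and $m_t(g_{xx})$ vanish identically on $\mathrm{span}\{\delta_0-\delta_1\}$ (constants only encode conservation of total mass, which $\delta_0-\delta_1$ already has). The correct way to kill this residual component, and what the paper does, is to feed $m_t=a(t)\delta_0+b(t)\delta_1$ back into (\ref{weak_sol}) with \emph{non-constant} test functions: since $g(0)=g(1)$ the left-hand side is $\tfrac{d}{dt}[(a+b)g(1)]$ while the right-hand side is $\tfrac12[a\,g_{xx}(0)+b\,g_{xx}(1)]$, and because $g\in\mathcal{F}(r)$ can be chosen (e.g.\ a quartic polynomial) with $g(0)=g(1)$, $g_x(1)=rg_x(0)$ but $g_{xx}(0),g_{xx}(1)$ arbitrary, this forces $a=b=0$. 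Note also that nothing in the definition of a weak solution imposes $\mu_t(\{0\})=r\mu_t(\{1\})$ for $t>0$, so this endpoint component cannot be excluded by appeal to the admissible class of measures.

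Two further points are fixable but should be made explicit. First, applying (\ref{weak_sol}) to the time-dependent test function $g(\cdot,s)$ requires justifying the product rule $\tfrac{d}{ds}\mu_s(g(\cdot,s))=\mu_s(g_s)+\tfrac12\mu_s(g_{xx})$; this works because vague continuity of $s\mapsto\mu_s$ on compact subintervals gives a uniform total-variation bound (Banach--Steinhaus), and $g$ is smooth in $s$ uniformly in $x$. Second, your duality argument presupposes existence of a classical backward solution with $g(\cdot,s)\in\mathcal{F}(r)$ for all $s<T$ and $g(\cdot,s)\to\phi$ uniformly as $s\uparrow T$; this is a well-posedness statement for the adjoint problem with boundary conditions (\ref{adjoint_bcs}) that must itself be proved (by the same unified-transform machinery, the adjoint being Birkhoff regular), so this route essentially duplicates the forward small-$t$ analysis rather than avoiding it --- which is precisely what the paper's eigenfunction-expansion argument is designed to sidestep.
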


\begin{proof}
See Appendix \ref{well-posedness}.
\end{proof}

The system \eqref{eq:pde:candidate} is a natural candidate for density estimation with such a linked boundary condition. Whilst Theorem \ref{wk_thm} is expected and analogous to the $r=1$ case, due to the non-self-adjoint boundary conditions, it is not immediately obvious what properties solutions of (\ref{eq:pde:candidate}) have. For example, one question is whether or not the solution is a probability density for $t>0$, and what its asymptotic properties are. Moreover, we would like to be able to write down an explicit solution formula (and ultimately use this to numerically compute the solution), a formal derivation of which is given in Appendix \ref{sol_deriv} using the unified transform.

\subsection{Solution formula and consistency of KDE at boundaries}

If we ignore the constant $r$ in the boundary conditions of \eqref{eq:pde:candidate} (and replace it by the special case $r=1$), then we would have the \textit{simple diffusion equation with periodic boundary conditions.} One can successfully apply Fourier methods, separation-of-variables or Sturm--Liouville theory to solve the periodic version of this PDE \cite{EvansPDEBook,gilbarg2015elliptic}.
However, when $r \ne 1$, making the ansatz that a solution is of the `rank one', separable form $f(x,t)=g(x)h(t)$ leads to a non-complete set of functions and \textit{separation of variables fails.} The differential operator associated with the evolution equation in (\ref{eq:pde:candidate}) is regular in the sense of Birkhoff \cite{birkhoff1908boundary} but not self-adjoint when $r\neq1$, due to the boundary conditions. Nevertheless, it is possible to generalize the notion of eigenfunctions of the differential operator \cite{coddington1955theory} and these generalized eigenfunctions form a complete system in $L^2([0,1])$ \cite{naimark1952linear,locker2000spectral} (and in fact form a Riesz basis). This allows us to obtain a series expansion of the solution. The easiest way to derive this is through the unified transform, which also generates a useful integral representation.

\begin{theorem}[Integral and Series Representations of Diffusion Estimator]
\label{rep_thm}
Suppose that the conditions of Theorem \ref{wk_thm} hold. Then the the unique solution of \eqref{eq:pde:candidate} has the following representations for $t>0$.

\vspace{1mm}

\noindent{}\textbf{Integral representation:}
	\begin{equation}
\label{thm_st}
\begin{split}
&2\pi f(x,t)=\int_{-\infty}^\infty {\exp(ikx-k^2t/2)}\hat{f}_0(k)dk\\
&-  \textstyle  \int_{\partial D^+}\frac{\exp(ikx-k^2t/2)}{{\Upsilon(k)}}\left\{\hat{f}_0(k)[(1+r)\exp(ik)-2r]+\hat{f}_0(-k)(1-r)\exp(-ik)\right\}dk\\
&- \textstyle\int_{\partial D^-}\frac{\exp(ik(x-1)-k^2t/2)}{\Upsilon(k)}\left\{\hat{f}_0(k)[2\exp(ik)-(1+r)]+\hat{f}_0(-k)(1-r)\right\}dk.
\end{split}
\end{equation}
Here the contours $\partial D^{\pm}$ are shown in Figure \ref{domains} and are deformations of the boundaries of $D^{\pm}=\{k\in\mathbb{C}^{\pm}:\mathrm{Re}(k^2)<0\}$. The determinant function is given by $\Upsilon(k)=2(1+r)(\cos(k)-1)$ and
$
\hat{f}_0(k):=\int_0^1 \exp(-ikx)f_0(x)dx.
$

\vspace{1mm}

\noindent{}\textbf{Series representation:}
	\begin{equation}
\label{series2}
\begin{split}
f(x,t)=&\frac{2}{(1+r)}\hat{c}_0(0)\phi_0(x)\\
&+\sum_{n\in\mathbb{N}}\frac{4\exp(-k_n^2t/2)}{(1+r)}\big\{\hat{c}_0(k_n)\phi_n(x)-k_nt(1-r)\hat{c}_0(k_n)\sin(k_nx)\\
&\quad\quad\quad\quad\quad+[\hat{s}_0(k_n)-(1-r)\hat{s}_1(k_n)]\sin(k_nx)\big\},
\end{split}
\end{equation}
where $k_n=2n\pi$ and
\begin{align*}
&\phi_n(x)=\left(r+(1-r)x\right)\cos(k_nx),&&\hat{s}_0(k)=\int_0^1 \sin(kx) f_0(x)dx,\\
&\hat{c}_0(k)=\int_0^1 \cos(kx) f_0(x)dx,&&\hat{s}_1(k)=\int_0^1 \sin(kx) xf_0(x)dx.
\end{align*}

\end{theorem}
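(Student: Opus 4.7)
The plan is to derive both representations from the unified transform applied to \eqref{eq:pde:candidate}, exactly as sketched for the classical $r=1$ (periodic) case in \cite{deconinck2014method,trogdon2012solution}. Since Theorem \ref{wk_thm} already provides uniqueness, it suffices to produce one explicit formula and verify (by dominated convergence and termwise differentiation) that it satisfies the PDE, the boundary conditions, and the initial condition.

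The first ingredient is the local conservation law
$$\bigl(e^{-ikx+k^2t/2}\,f\bigr)_t \;=\; \tfrac{1}{2}\bigl(e^{-ikx+k^2t/2}(f_x+ikf)\bigr)_x, \qquad k\in\mathbb{C},$$
which holds for any classical solution of $f_t=\tfrac12 f_{xx}$. Integrating over $[0,1]\times[0,t]$ and applying Green's theorem gives the global relation
$$e^{k^2t/2}\hat{f}(k,t) \;=\; \hat{f}_0(k) + \tfrac{1}{2}e^{-ik}\bigl[\tilde{g}(k,t)+ik\tilde{f}_1(k,t)\bigr] - \tfrac{1}{2}\bigl[\tilde{g}(k,t)+ik\tilde{f}_0(k,t)\bigr],$$
where $\hat{f}(k,t)=\int_0^1 e^{-ikx}f(x,t)\,dx$ and $\tilde{f}_j(k,t),\,\tilde{g}_j(k,t)$ are the Laplace-type transforms $\int_0^t e^{k^2s/2}$ applied to $f(j,s)$ and $f_x(j,s)$. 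The Neumann-type condition forces $\tilde{g}_0=\tilde{g}_1=:\tilde{g}$, and the linked condition gives $\tilde{f}_0=r\tilde{f}_1$. Replacing $k$ by $-k$ (using that $k^2$ is invariant) yields a second relation, and the resulting $2\times 2$ system in $(\tilde{f}_1,\tilde{g})$ has determinant proportional to $\Upsilon(k)=2(1+r)(\cos(k)-1)$, exactly the stated function. Solving and substituting produces an explicit formula for $\hat{f}(k,t)$ in terms of $\hat{f}_0(\pm k)$ and $\hat{f}(\pm k,t)$.

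For the integral representation \eqref{thm_st}, insert this formula into the Fourier inversion $2\pi f(x,t)=\int_{-\infty}^\infty e^{ikx-k^2t/2}\hat{f}(k,t)\,dk$ valid for $x\in(0,1)$, then split the integrand into three pieces: a term in $\hat{f}_0(k)$ that stays on $\mathbb{R}$, and two terms of the form $e^{ikx}/\Upsilon(k)$ and $e^{ik(x-1)}/\Upsilon(k)$ multiplying $\hat{f}_0(\pm k)$. For $x\in(0,1)$ these last two pieces decay exponentially in $D^+$ and $D^-$ respectively, so Cauchy's theorem and Jordan's lemma let me deform their contours onto $\partial D^\pm$ (chosen to skirt the zeros of $\Upsilon$ on the real axis). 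The decisive cancellation at the heart of the unified transform is that, on the deformed contours, the pieces containing the \emph{unknown} transform $\hat{f}(\pm k,t)$ vanish identically, because the $s$-integrands over $[0,t]$ admit bounded analytic extensions into $D^\pm$ and the closed contour integral is zero. What remains after this cancellation and routine rearrangement is precisely \eqref{thm_st}.

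The series representation \eqref{series2} then follows by closing each contour $\partial D^\pm$ back onto the real axis and applying the residue theorem; the arcs at infinity contribute nothing since $e^{-k^2t/2}$ decays on $\partial D^\pm$ for $t>0$. The enclosed poles are the zeros of $\Upsilon$, namely the double roots $k_n=2n\pi$, $n\in\mathbb{Z}$. Because these are double zeros, the Laurent expansion $1/\Upsilon(k)=-[(1+r)(k-k_n)^2]^{-1}+O(1)$ forces each residue to involve a $k$-derivative of the numerator: differentiating $e^{-k^2t/2}$ contributes a factor $-kt$, which is the origin of the non-separable term $-k_n t(1-r)\hat{c}_0(k_n)\sin(k_n x)$. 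Using $e^{ik_n}=1$ the bracketed expressions in \eqref{thm_st} collapse to combinations of $1-r$ and $r$, from which $\phi_n(x)=(r+(1-r)x)\cos(k_n x)$ emerges naturally, and the $n=0$ residue produces the constant prefactor $\tfrac{2}{1+r}\hat{c}_0(0)\phi_0(x)$. The main obstacles are justifying the contour deformations rigorously for merely measure-valued $f_0$ (Paley--Wiener-type bounds on $\hat{f}_0$ plus absolute convergence of the $k$-integrals after deformation), and bookkeeping the double-pole residues carefully so that the \emph{generalized} eigenfunctions, rather than ordinary eigenfunctions, appear; this latter feature is the defining consequence of the non-self-adjoint boundary conditions and is the genuinely new content of the series expansion.
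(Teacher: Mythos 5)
Your proposal is correct and follows essentially the same route as the paper: the unified transform (divergence form, Green's theorem, global relation, $k\mapsto-k$ symmetry, the $2\times2$ system with determinant $\Upsilon$, and elimination of the unknown boundary transforms by analyticity/Jordan's lemma) to obtain \eqref{thm_st}, followed by residue calculus at the double zeros $k_n=2n\pi$ to obtain \eqref{series2}, with the double poles producing the non-separable (generalized eigenfunction) terms. The paper's rigorous verification likewise checks the PDE by differentiating under the integral, the boundary conditions via the series, and the initial data by dominated convergence against the kernel, then appeals to uniqueness, exactly as you outline.
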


\begin{proof}
See Appendix \ref{well-posedness}.
\end{proof}

In the case where $r\neq 1$, in addition to the usual \textit{separable} solutions $\exp(ik_nx-k_n^2t/2)$, the series expansion also includes the \textit{non-separable} solutions $\exp(ik_nx-k_n^2t/2)(x+ik_nt)$. We can understand these as being generalized eigenfunctions in the following sense (see the early papers \cite{machover1965generalized,tamarkin1928some}). 
Define the operator
\begin{equation}
\label{domain}
\mathbb{A}=-\frac{d^2}{dx^2},\quad \mathcal{D}(\mathbb{A})=\{u\in H^2([0,1]):u(0)=r u(1),u_x(0)=u_x(1)\},
\end{equation}
where $\mathcal{D}(\mathbb{A})$ denotes the domain of $\mathbb{A}$. We use $\mathcal{N}$ to denote the null space, which is sometimes often termed the kernel, of an operator, i.e. $\mathcal{N}(S)$ is the space of all vectors $v$ with $S(v)=0$. It is then easily checked that $\phi_n\in\mathcal{N}((\mathbb{A}-k_n^2I)^2)$. In particular, both $\phi_n$ and $(\mathbb{A}-k_n^2I)\phi_n$ satisfy the required boundary conditions. These functions \textit{block diagonalize} the operator in an analogous form to the Jordan normal form for finite matrices. If we consider any generalized eigenspace $\mathcal{N}((\mathbb{A}-k_n^2I)^2)$ corresponding to $k_n^2=4\pi^2n^2$ with $n>0$ and choose the basis $\{\sin(k_nx),\phi_n(x)/(2k_n)\}$, the operator acts on this subspace as the matrix
$$
\left(
\begin{tabular}{cc}
$k_n^2$   & $1-r$ \\
$0$       & $k_n^2$
\end{tabular}
\right),
$$
which cannot be diagonalized for $r\neq 1$.

For our purposes of kernel density estimation, we define an \textit{integral kernel} $K$ so that we can write the solution as
$$
f(x,t)=\int_0^1K(r;x,y,t)f_0(y)dy.
$$
After some residue calculus (see \eqref{solution2} in the Appendix), this is given by the somewhat complicated expression:
\begin{equation}
\label{kernel0}
\begin{split}
K(r;x,y,t)&=\sum_{n\in\mathbb{Z}}{\exp(ik_nx-k_n^2t/2)}\Big[\exp(-ik_ny)
+\frac{1-r}{1+r}(x+ik_nt)\exp(-ik_ny)\\
&+\frac{1-r}{1+r}(x+ik_nt-1)\exp(ik_ny)+\frac{1-r}{1+r}y(\exp(ik_ny)-\exp(-ik_ny))\Big],
\end{split}
\end{equation}
which can be re-expressed in terms of the more common $r=1$ kernel and its derivative, as in \eqref{kernel1}. For the initial data (\ref{IC}) this gives the density estimate
$$
f(x,t)=\frac{1}{n}\sum_{k=1}^nK(r;x,X_k,t),
$$
a generalization of (\ref{standard}). A key consequence of the solution from Theorem \ref{rep_thm} is that the pointwise bias of the corresponding diffusion estimator vanishes if $f_X$ is continuous with $f_X(0)=rf_X(1)$. Namely, we have the following.

\begin{theorem}[Consistency of Estimator at Boundaries]
\label{thm:boundary consistency}
Suppose that the initial data is given by (\ref{IC}) and that $f_X\in C([0,1])$ with $f_X(0)=rf_X(1)$. Then the solution of the PDE \eqref{eq:pde:candidate} satisfies
\begin{equation}
\lim_{t\downarrow0} \mathbb{E}_{f_X}(f(x,t))=f_X(x),
\end{equation}
uniformly in $x$. Further, if in addition $f_X\in C^1([0,1])$ and $x_t=x+\mathcal{O}(\sqrt{t})$, then our estimator satisfies
$$
\left|\mathbb{E}_{f_X}(f(x_t,t))-f_X(x)\right|\leq C(f_X)\sqrt{t},
$$
with $C(f_X)$ a constant independent of $x\in[0,1]$, but dependent on the true $f_X$.
\end{theorem}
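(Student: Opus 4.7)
The plan is to reduce the statement to a deterministic PDE fact and then combine the qualitative convergence of Theorem~\ref{wk_thm} with a direct kernel-level estimate for the rate. Because the sample is i.i.d.\ with density $f_X$, linearity of expectation gives
\[
\mathbb{E}_{f_X}(f(x,t)) = \int_0^1 K(r;x,y,t)\,f_X(y)\,dy =: u(x,t),
\]
which by Theorem~\ref{rep_thm} is the unique weak solution of \eqref{eq:pde:candidate} with initial datum $f_X$. Since $f_X\in C([0,1])$ and $f_X(0)=rf_X(1)$, part~1 of Theorem~\ref{wk_thm} applied to this deterministic initial condition yields $u(\cdot,t)\to f_X$ uniformly on $[0,1]$ as $t\downarrow 0$, which is exactly the first claim.

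For the $\sqrt{t}$ rate I would exploit the structure of \eqref{kernel0} to write $K(r;x,y,t) = K_{\mathrm{per}}(x-y,t) + \tfrac{1-r}{1+r}\,\mathcal{R}(x,y,t)$, where $K_{\mathrm{per}}$ is the $1$-periodic Gaussian heat kernel and $\mathcal{R}$ collects the terms $(x-y)K_{\mathrm{per}}(x-y,t)$, $(x+y-1)K_{\mathrm{per}}(x+y,t)$, and $t\,\partial_x K_{\mathrm{per}}$ visible in \eqref{kernel0}. Standard Gaussian moment bounds give $\int_0^1|\mathcal{R}(x,y,t)|\,dy = O(\sqrt{t})$ uniformly in $x$, so the $\mathcal{R}$-contribution to $u$ is $O(\sqrt{t})\|f_X\|_\infty$. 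The periodic piece equals the convolution of the Gaussian on $\mathbb{R}$ with the $1$-periodic extension $\tilde f_X$ of $f_X$; for $x$ bounded away from $\{0,1\}$, a first-order Taylor expansion and the classical KDE-bias argument give the interior bound $O(\sqrt{t})\|f_X'\|_\infty$. Near the endpoints, $\tilde f_X$ has a jump of size $(1-r)f_X(1)$, so the periodic piece alone would leave an $O(1)$ boundary layer---but this is precisely the layer that the corrections $\mathcal{R}$ cancel once the compatibility $f_X(0)=rf_X(1)$ is used. A representative check at $x=0$: a short computation from \eqref{kernel0} shows that all non-periodic contributions collapse into a single multiplicative factor,
\[
K(r;0,y,t) = \tfrac{2r}{1+r}\,K_{\mathrm{per}}(y,t),
\]
so $u(0,t) = \tfrac{2r}{1+r}\bigl[\tfrac12(f_X(0)+f_X(1)) + O(\sqrt{t})\bigr] = f_X(0) + O(\sqrt{t})$ after substituting $f_X(0)=rf_X(1)$. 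The mirror calculation at $x=1$, interpolated with the interior estimate, delivers $\|u(\cdot,t)-f_X\|_\infty \le C(f_X)\sqrt{t}$, and then the shifted claim follows from
\[
|u(x_t,t)-f_X(x)| \le \|u(\cdot,t)-f_X\|_\infty + |f_X(x_t)-f_X(x)| \le C(f_X)\sqrt{t} + \|f_X'\|_\infty\cdot O(\sqrt{t}).
\]

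The main obstacle is making the boundary cancellation quantitative and uniform in $x$: individually $K_{\mathrm{per}}$ and $\mathcal{R}$ each produce $O(1)$ boundary layers when $r\neq 1$, and only the delicate cancellation between them---driven by $f_X(0)=rf_X(1)$---recovers the $\sqrt{t}$ rate, so bridging the explicit endpoint computation with the interior Taylor argument through a boundary layer of width $\sqrt{t}$ must be done carefully. A more abstract alternative would be an analytic-semigroup route: the operator $\mathbb{A}$ from \eqref{domain} is Birkhoff-regular, so one may identify $\{g\in C^1([0,1]):g(0)=rg(1)\}$ with a subspace of an intermediate space (morally $\mathcal{D}(\mathbb{A}^{1/2})$) on which the general smoothing bound $\|S(t)g-g\|_\infty\lesssim\sqrt{t}\,\|g\|$ for analytic semigroups holds automatically; this avoids the explicit kernel analysis at the cost of developing the functional-analytic framework for the non-self-adjoint generator.
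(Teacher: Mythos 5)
Your first claim is handled correctly and exactly as in the paper: by linearity of expectation the bias question reduces to the deterministic solution with initial datum $f_X$, and the uniform convergence under $f_X(0)=rf_X(1)$ is precisely the content of Theorem~\ref{wk_thm} (Proposition~\ref{cty_bdy} in the appendix), which you may legitimately invoke.

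The $\sqrt{t}$ rate, however, has a genuine gap. Your asserted bound $\int_0^1|\mathcal{R}(x,y,t)|\,dy=\mathcal{O}(\sqrt{t})$ \emph{uniformly in $x$} is false: for $x$ within $\mathcal{O}(\sqrt{t})$ of an endpoint, the wrap-around Gaussian bumps of $K_1(x-y,t)$ (at $y\approx x\pm1$) and of $K_1(x+y,t)$ (at $x+y\approx 0,2$) carry $\Theta(1)$ mass while the weights $(x-y)$ and $(x+y-1)$ are $\Theta(1)$ there, so the $\mathcal{R}$-contribution is $\mathcal{O}(1)$ in that layer --- as you yourself concede later when you say both pieces produce $\mathcal{O}(1)$ boundary layers that must cancel. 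The proposed repair, namely the exact identities $K(r;0,y,t)=\frac{2r}{1+r}K_1(y,t)$ and its mirror at $x=1$ ``interpolated with the interior estimate,'' is not an argument: those identities hold only at the two endpoints, the interior Taylor bound degenerates at distance $\mathcal{O}(\sqrt{t})$ from the boundary, and no mechanism (the bias $u-f_X$ does not solve the heat equation, so no maximum principle applies directly) is supplied to bridge the layer $0<x\lesssim\sqrt{t}$. This bridging is exactly what the paper's proof is built to avoid: it subtracts the affine function $f_X(0)+x(1-r)f_X(1)$, which the kernel reproduces \emph{exactly} (equation \eqref{want_this}, proved by residue calculus), so the bias becomes $\int_0^1K(r;x,y,t)[p_0(y)-p_0(x)]\,dy$ with $p_0(0)=p_0(1)=0$; differentiating in $t$ and integrating by parts then isolates the entire boundary effect in the single explicit term of \eqref{missing_piece}, proportional to $[p_0'(1)-p_0'(0)]\int_0^tK_1(x,s)\,ds$ plus an $\mathcal{O}(t)$ error uniform in $x$, and the elementary bound $\int_0^tK_1(x,s)\,ds\leq C\sqrt{t}$ (uniform in $x$) delivers the rate, with the shift $x_t=x+\mathcal{O}(\sqrt{t})$ absorbed via $|f_X(x)-f_X(x_t)|=\mathcal{O}(\sqrt{t})$ just as you do. You would need either to reproduce an identity of this type (quantifying the cancellation inside the layer), or to carry out in full the semigroup route you sketch --- which, for this non-self-adjoint generator in the sup norm, is only gestured at (``morally $\mathcal{D}(\mathbb{A}^{1/2})$'') and is not yet a proof.
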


\begin{proof}
See Appendix \ref{consist_append}.
\end{proof}

\begin{remark}
For consistency in $L^p$ spaces, we refer the reader to Proposition \ref{cty_bdy2} in Appendix \ref{consist_append}.
\end{remark}

\subsection{Conservation of probability and non-negativity}

In addition to establishing that the behavior of the PDE solution near the boundaries is satisfactory, we also want the PDE solution to be a proper bona fide probability density --- a non-negative function integrating to unity. The main tool in the proof of this is the Maximum Principle \cite{EvansPDEBook,gilbarg2015elliptic} for parabolic PDEs. The Maximum Principle states that a solution of the diffusion equation attains a maximum on the `boundary' of the two-dimensional region in space $x \in [0,1]$ and time $t\ge 0$. If our initial condition is given by a continuous function, then the maximum principle gives the following.

\begin{proposition}[Bounds on Diffusion Estimator]
\label{max_prin}
Suppose that the conditions of Theorem \ref{wk_thm} hold and that $f_0$ is a continuous function with $f_0(0)=rf_0(1)$ and non-negative with $0\leq a \leq f_0(x)\leq b$ for all $x\in [0,1]$. Then for any $t>0$ and $x\in[0,1]$ we have
\begin{equation}\textstyle
\label{sol_bound}
\min\left\{\frac{2r}{1+r},\frac{2}{1+r}\right\}a\leq f(x,t)\leq \max\left\{\frac{2r}{1+r},\frac{2}{1+r}\right\}b.
\end{equation}
In particular, $f$ remains bounded away from $0$ if $a>0$ and $r>0$.
\end{proposition}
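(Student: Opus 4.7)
I would prove the two-sided bound by reducing it to controlling the single auxiliary solution $u(x,t)$ with the constant (non-admissible) initial datum $u_0\equiv 1$. Throughout, $\tilde{\phi}_0(x) := 2\phi_0(x)/(1+r)$ denotes the normalized stationary density, whose extrema $\max\tilde{\phi}_0 = 2\max(r,1)/(1+r)$ and $\min\tilde{\phi}_0 = 2\min(r,1)/(1+r)$ are precisely the constants appearing in the proposition.

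For the comparison step, I would substitute $v = f/\phi_0$ (licit since $\phi_0(x) = r+(1-r)x > 0$ on $[0,1]$ when $r>0$): direct calculation gives $v_t = \tfrac{1}{2} v_{xx} + \frac{1-r}{\phi_0(x)} v_x$ with the periodic-type boundary conditions $v(0,t)=v(1,t)$ and $v_x(1,t) = r\,v_x(0,t)$. Any putative boundary extremum $v(0,t_0)=v(1,t_0)$ with $t_0>0$ would, by Hopf's lemma, force $v_x(0,t_0)$ and $v_x(1,t_0)$ to have opposite signs, contradicting $v_x(1,t_0) = r\,v_x(0,t_0)$ for $r>0$; combined with the strong maximum principle this shows the extrema of $v$ are attained at $t=0$. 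This yields positivity preservation for the semigroup and, by linearity applied to the $L^p$ solutions with initial data $b-f_0\ge 0$ and $f_0-a\ge 0$, the sandwich $a\,u(x,t) \le f(x,t) \le b\,u(x,t)$.

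For the sharp bound on $u$, I would specialise the series representation~\eqref{series2} to $f_0\equiv 1$. Because $\hat{c}_0(k_n) = \hat{s}_0(k_n) = 0$ and $\hat{s}_1(k_n) = -1/k_n$ for $k_n = 2n\pi$ and $n\ge 1$, the series collapses to
\[
u(x,t) \;=\; \tilde{\phi}_0(x) + \frac{4(1-r)}{1+r}\sum_{n=1}^\infty \frac{e^{-k_n^2 t/2}}{k_n}\,\sin(k_n x).
\]
Since $\sin(k_n\cdot 0) = \sin(k_n\cdot 1) = 0$, one reads off the pivotal pinning identities $u(0,t) = \tilde{\phi}_0(0)$ and $u(1,t) = \tilde{\phi}_0(1)$ for every $t>0$. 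Setting $w := u - \tilde{\phi}_0$, the function $w$ then solves the ordinary heat equation on $(0,1)\times(0,T)$ with \emph{Dirichlet-zero} lateral data for $t>0$ and initial datum $w(x,0) = (1-r)(1-2x)/(1+r)$. The linear functions $\psi_+(x) = (r-1)x/(1+r)$ and $\psi_-(x) = -(r-1)(1-x)/(1+r)$ (when $r\ge 1$, and their $x\mapsto 1-x$ reflections when $r\le 1$) are stationary heat-equation solutions that dominate, respectively are dominated by, $w$ on the parabolic boundary, so the classical Dirichlet comparison principle gives $\psi_-\le w\le \psi_+$. Adding $\tilde{\phi}_0$ yields $\min\tilde{\phi}_0 \le u(x,t) \le \max\tilde{\phi}_0$, and combining with the preceding sandwich completes the proof.

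\textbf{The main obstacle} is that the maximum principle applied directly to $f$ (or through the single substitution $v = f/\phi_0$) only yields the strictly weaker bound $f \le b\,\max(r,1)/\min(r,1)$, which overshoots the sharp constant by a factor of $(1+r)/(2\min(r,1))$. Recovering the stated constants hinges on the boundary-layer pinning for the constant auxiliary datum $u_0\equiv 1$: the non-selfadjoint boundary conditions instantaneously relax $u(0,t)$ and $u(1,t)$ to the stationary values, which is exactly what reduces the problem to a Dirichlet-zero heat equation on which elementary linear comparison functions become sharp, and verifying this across both signs of $r-1$ is the technical heart of the argument.
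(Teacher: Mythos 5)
Your proposal is correct in substance, but it takes a genuinely different route from the paper, and the paper's route is considerably shorter. The paper applies the parabolic maximum principle to $f$ itself (after noting that compatible continuous data give continuity on $[0,1]\times[0,T)$), which reduces everything to the parabolic boundary, and then evaluates the lateral boundary values \emph{exactly}: the series collapses at the endpoints to $f(0,t)=\tfrac{2r}{1+r}\int_0^1K_1(y,t)f_0(y)\,dy$ and $f(1,t)=\tfrac{2}{1+r}\int_0^1K_1(y,t)f_0(y)\,dy$, a nonnegative periodic heat average of $f_0$, so these values lie in $[\tfrac{2ra}{1+r},\tfrac{2rb}{1+r}]$ and $[\tfrac{2a}{1+r},\tfrac{2b}{1+r}]$; together with $\min\{\tfrac{2r}{1+r},\tfrac{2}{1+r}\}\le 1\le\max\{\tfrac{2r}{1+r},\tfrac{2}{1+r}\}$ this gives \eqref{sol_bound} for all $r\ge 0$. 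So the ``main obstacle'' you describe is not actually an obstacle for the paper: the plain maximum principle does yield the sharp constants once the lateral values are computed explicitly, with no change of variables or auxiliary solution. Your route --- the substitution $v=f/\phi_0$ with Hopf's lemma at the linked corners to get positivity preservation, the sandwich $a\,S_t1\le f\le b\,S_t1$, and the explicit series for $S_t1$ (your computation $\hat c_0(k_n)=\hat s_0(k_n)=0$, $\hat s_1(k_n)=-1/k_n$ is correct) followed by a linear-barrier Dirichlet comparison --- is a valid alternative, and it has the merit of isolating positivity of the semigroup as a standalone fact, which the paper only obtains later (Theorem \ref{cons_thm}, whose proof uses this very proposition).

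Two steps need patching before your outline is a proof. First, the data you feed into the positivity step, $b-f_0$ and $f_0-a$, do not satisfy the compatibility condition $g(0)=rg(1)$ unless $r=1$ or the relevant constant vanishes; by Proposition \ref{cty_bdy} the corresponding solutions are then not continuous at the corners $(0,0)$ and $(1,0)$, so the clean statement ``the extrema of $v$ are attained at $t=0$'' does not apply verbatim. The same corner discontinuity occurs in $w=u-2\phi_0/(1+r)$, whose initial datum $(1-r)(1-2x)/(1+r)$ does not vanish at the endpoints, so the ``classical Dirichlet comparison principle'' needs a limiting or kernel argument (e.g.\ prove positivity for compatible continuous data, where your Hopf argument is airtight, then approximate $b-f_0$ and $f_0-a$ in $L^1$ by nonnegative compatible functions vanishing at the endpoints and use boundedness of $K(r;x,\cdot,t)$ for fixed $t>0$; for $w$, observe it is a genuine Dirichlet heat solution, since only the modes $\sin(2n\pi x)$ appear, and run the comparison through the nonnegative Dirichlet heat kernel). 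These are standard repairs, but as written they are gaps. Second, the substitution $v=f/\phi_0$ and the bounded drift $(1-r)/\phi_0$ require $r>0$; the proposition is stated for $r\ge 0$, and at $r=0$ (where $\phi_0(x)=x$) your argument degenerates, whereas the paper's proof covers $r=0$ without modification.
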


\begin{proof}
See Appendix \ref{ndprf}.
\end{proof}

However, we also want this to hold when $f_0$ is given by (\ref{IC}). Furthermore, if we start with a probability measure as our initial condition, then we want the solution to be the density function of a probability distribution for any $t>0$. In the context of density estimation, this essential property corresponds to \textit{conservation of probability.} This is made precise in the following theorem, which does not require continuous initial data.

\begin{theorem}[A Bona Fide Kernel Density Estimator]
\label{cons_thm}
Suppose that the conditions of Theorem \ref{wk_thm} hold and that the initial condition $f_0$ is a probability measure. Then,
\begin{enumerate}
	\item$\int_0^1 f(x,t)dx=1$, for $t>0$,
	\item $f(x,t)\geq 0$ for $t>0$ and $x\in [0,1]$.
\end{enumerate}
\end{theorem}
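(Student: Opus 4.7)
For (1), my plan is to test the weak formulation \eqref{weak_sol} against the constant function $g\equiv 1$. Since $g(0)=g(1)$ and $g_x(0)=g_x(1)=0$, we have $g\in\mathcal{F}(r)$, and $g_{xx}\equiv 0$ gives $\tfrac{d}{dt}\mu_t(1)=0$. Hence $\int_0^1 f(x,t)\,dx=\mu_t(1)=\mu_0(1)=1$ for every $t>0$, which is (1).

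For (2), I would combine the maximum principle of Proposition~\ref{max_prin} with an approximation argument. The key observation is that Proposition~\ref{max_prin} already yields non-negativity whenever the initial datum is a continuous non-negative function satisfying the compatibility condition $f_0(0)=rf_0(1)$ (take $a=0$). So the plan is to produce a sequence of smooth non-negative compatible initial data converging vaguely to $f_0$, invoke the proposition for each approximant, and pass to the limit via the kernel representation.

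The delicate step is the approximation, because compatibility must be preserved exactly at every stage. I would handle the possibility of point masses at the endpoints by the splitting
\[
f_0 = f_0^{\mathrm{int}} + \alpha\,(r\delta_0+\delta_1), \qquad \alpha:=f_0(\{1\}),
\]
where $f_0^{\mathrm{int}}$ is a finite non-negative measure on $(0,1)$ (the hypothesis forces $f_0(\{0\})=r\alpha$). Then I would mollify $f_0^{\mathrm{int}}$ with bumps whose supports shrink strictly inside $(0,1)$ (so both boundary values vanish automatically), and approximate the boundary part by $r\psi_\varepsilon^{(0)}+\psi_\varepsilon^{(1)}$, where $\psi_\varepsilon^{(0)}$ and $\psi_\varepsilon^{(1)}$ are two copies of a common smooth profile localized near $0$ and $1$, chosen to share a common boundary value and to vanish at the opposite endpoint. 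Summing produces smooth non-negative $f_0^{(\varepsilon)}$ with $f_0^{(\varepsilon)}(0)=rf_0^{(\varepsilon)}(1)$ and $f_0^{(\varepsilon)}\to f_0$ vaguely.

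To finish, for $t>0$ Theorem~\ref{rep_thm} provides an integral kernel $K(r;x,y,t)$ that is continuous (in fact smooth) in $y$, so the solution map $f_0\mapsto f(\cdot,t)$ extends continuously to $M([0,1])$ in the vague topology; applying it to the approximants gives $f^{(\varepsilon)}(x,t)\to f(x,t)$ pointwise, and non-negativity passes to the limit. The main technical obstacle is precisely preserving the linked boundary condition under regularization when $f_0$ carries mass at the endpoints, which a naive mollification would destroy; the decomposition above sidesteps this by treating the interior and boundary pieces separately.
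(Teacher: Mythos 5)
Your proposal is correct, but it reaches the conclusion by a route that differs from the paper's in both parts, so a comparison is worth recording. For part (1) you test the weak formulation \eqref{weak_sol} against $g\equiv 1\in\mathcal{F}(r)$ and use the vague continuity at $t=0$ to get $\mu_t(1)\equiv\mu_0(1)=1$; this is shorter and more structural than the paper's argument, which instead computes $\int_0^1 K(r;x,y,t)\,dx=1$ directly from the series representation \eqref{kernel1} by term-by-term integration and a parity argument in $k_n$, then applies Fubini. What the paper's computation buys is the stronger pointwise statement that the kernel itself has unit mass in $x$ for every $y$, which is reused implicitly elsewhere; what your argument buys is independence from the explicit solution formula. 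For part (2) both arguments ultimately rest on Proposition \ref{max_prin} plus an approximation step and the continuity of $y\mapsto K(r;x,y,t)$, but they are organized differently: the paper proves the stronger statement that $K(r;x,y,t)\geq0$ for all $x,y$, arguing by contradiction at a putative interior point $(x_0,y_0)$ with bump initial data concentrating at $y_0$ (which are trivially compatible since they vanish near the endpoints), and it must separately check the boundary cases $x=0,1$ via the Poisson summation identities $K(r;0,y,t)=\frac{2r}{1+r}K_1(y,t)$, $K(r;1,y,t)=\frac{2}{1+r}K_1(y,t)$. You instead approximate the full initial measure by smooth, non-negative, compatible data and pass to the limit, which avoids the explicit boundary evaluation of the kernel but forces you to preserve the linked condition under regularization; your decomposition $f_0=f_0^{\mathrm{int}}+\alpha(r\delta_0+\delta_1)$ handles this correctly, with one small imprecision: mollifying $f_0^{\mathrm{int}}$ alone does not make the boundary values vanish \emph{automatically} if $f_0^{\mathrm{int}}$ carries mass arbitrarily close to the endpoints (a fixed-width mollifier then leaks positive values to $x=0,1$ and generically breaks $f(0)=rf(1)$). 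The standard fix is to first push the interior measure inward by the affine map $x\mapsto\varepsilon+(1-2\varepsilon)x$ and then mollify at scale smaller than $\varepsilon/2$; with that adjustment your limit argument via the kernel representation of Theorem \ref{rep_thm} goes through and yields $f(x,t)\geq0$, though note it gives non-negativity of the solution only, not the pointwise non-negativity of $K$ that the paper establishes.
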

\begin{proof}
See Appendix \ref{ndprf2}.
\end{proof}

From the solution formula \eqref{series2}, we can also characterize the behavior of the solution for large bandwidths (large $t$), that is, when the estimator oversmooths the data. An example of this is given in Figure \ref{fig:toy:example}.

\begin{corollary}[Oversmoothing Behavior with Large Bandwidth]
\label{large_time}
Suppose that the conditions of Theorem \ref{wk_thm} hold, then as $t\rightarrow\infty$, $f$ converges uniformly on $[0,1]$ to the linear function
\begin{equation}
\label{inf_t} \textstyle
f_{\infty}(x):=\frac{2}{(1+r)}\hat{c}_0(0)\phi_0(x).
\end{equation}
This linear function is the unique stationary function that obeys the boundary conditions and has the same integral over $[0,1]$ as $f_0$.
\end{corollary}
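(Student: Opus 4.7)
My plan is to read the conclusion directly off the series representation \eqref{series2} from Theorem \ref{rep_thm}. The $n=0$ summand is exactly $f_\infty(x)$, so the task reduces to two things: first, showing that the remaining sum over $n\in\mathbb{N}$ tends to zero uniformly in $x\in[0,1]$ as $t\to\infty$; second, verifying that $f_\infty$ is indeed the unique stationary solution subject to the prescribed boundary and integral constraints.

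For the decay of the tail, I would bound every $x$-dependent factor uniformly on $[0,1]$: $|\phi_n(x)|\leq\max(r,1)$ and $|\sin(k_n x)|\leq 1$. The Fourier-type coefficients $\hat{c}_0(k_n)$, $\hat{s}_0(k_n)$, $\hat{s}_1(k_n)$ are uniformly bounded in $n$ by the total variation $|f_0|([0,1])$, which is finite by hypothesis. Combining these yields an estimate of the form
\begin{equation*}
\sup_{x\in[0,1]}|f(x,t)-f_\infty(x)|\leq C\sum_{n=1}^{\infty}(1+k_n t)\exp(-k_n^2 t/2),
\end{equation*}
where $C$ depends only on $r$ and $|f_0|([0,1])$. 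For $t\geq 1$ the right-hand side is dominated by its $n=1$ term, since the Gaussian decay in $n^2$ easily absorbs the linear-in-$n$ prefactor, so the whole expression is $O(t\exp(-2\pi^2 t))$ and tends to zero. I expect this step to be the main (but still straightforward) point: one has to verify that the explicit $k_n t$ term coming from the non-separable generalized eigenfunctions does not destroy the decay, which it does not because of the $\exp(-k_n^2 t/2)$ factor.

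For the uniqueness claim, any stationary solution must solve $u''=0$ and therefore be affine, $u(x)=\alpha x+\beta$. The linked boundary condition $u(0)=r u(1)$ becomes $\beta(1-r)=r\alpha$, while the derivative boundary condition is automatic. When $r\neq 1$ this identifies $u$ up to a scalar with $\phi_0(x)=r+(1-r)x$; when $r=1$ it forces $\alpha=0$, leaving the constants, which are again spanned by $\phi_0\equiv 1$. A one-line computation gives $\int_0^1\phi_0(x)\,dx=(1+r)/2$, so the integral constraint $\int_0^1 u=\int_0^1 f_0=\hat{c}_0(0)$ pins down the scalar as $2\hat{c}_0(0)/(1+r)$, which reproduces exactly $f_\infty$. (Conservation of the integral under the evolution was already established in Theorem \ref{cons_thm}, so $\int_0^1 f(\cdot,t)=\int_0^1 f_0$ is automatic and consistent with the limit.)
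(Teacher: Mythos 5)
Your proposal is correct and follows essentially the same route as the paper, which presents this corollary as an immediate consequence of the series representation \eqref{series2}: the $n=0$ term is $f_\infty$, the $\exp(-k_n^2t/2)$ factors kill the tail uniformly (the linear-in-$t$ terms from the generalized eigenfunctions being harmless, as you note), and the affine stationary solutions of the boundary-value problem are spanned by $\phi_0$, with the integral constraint fixing the scalar. Your write-up simply supplies the details the paper leaves implicit, and they check out.
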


\begin{figure}
\centering
    \includegraphics[width=0.5\textwidth,trim={32mm 93mm 35mm 95mm},clip]{./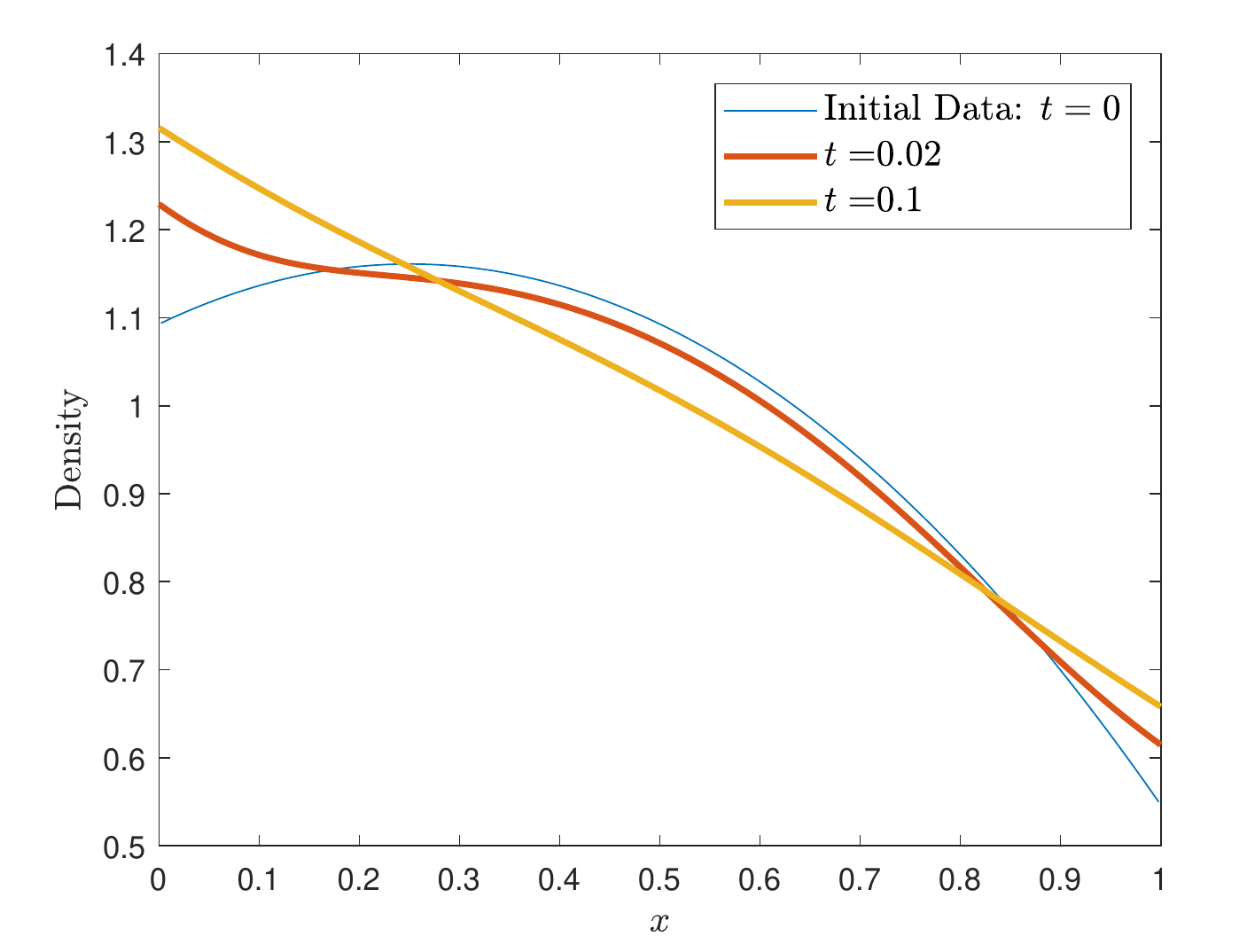}
    \caption{An example of the solution of the continuous PDE \eqref{eq:pde:candidate} at three time points, with $f_0(x) = \frac{6}{11}(-2x^2 +x+2)$. The values at the boundaries change with time, but the {ratio} remains a constant with $f(0,t)=2f(1,t)$. As $t\rightarrow\infty$, the solution converges to a straight line.}
    \label{fig:toy:example}
\end{figure}

\section{Asymptotic Properties and Bandwidth Choice}
\label{asymp_properties}

An important issue in kernel density estimation is how to choose the bandwidth parameter or, equivalently, the final or stopping time $T$ at which we compute the solution of the PDE. This issue has already received extensive attention in the literature \cite{jones1992progress,sheather1991reliable,devroye1997universal,jones1996brief}. We now give a brief summary of that issue, and we also make two suggestions for known methods already available. After that, we address the issue specifically in the context of our linked boundaries model.

At one extreme, if we choose $T=0$, then we recover the initial condition, which is precisely the raw data, with an estimator with zero bias, but infinite variance. At the other extreme, if we let $T \rightarrow \infty$, then we obtain a stationary density that is a straight line (see Corollary \ref{large_time}), which contains no information whatsoever about the raw data (other than the empirical mean), giving an estimator of zero variance, but significant bias. In between, $0<T<\infty$, we have some smoothing effect while also retaining some information from the original data --- an optimal balance between the variance and the bias of the estimator.

One would also like a consistent estimator --- as more and more data are included, it must converge to the true density (for instance, in the mean squared sense). Various proposals for the stopping times and their properties are available. One of the most common choices is `Silverman's rule of thumb' \cite{silverman2018density}, which works very well when the data is close to being normally distributed. We expect that this choice is fine for the simpler datasets and examples that we consider in this article. Another possible approach is to use the output from the freely available software of one of the authors: \url{https://au.mathworks.com/matlabcentral/fileexchange/14034-kernel-density-estimator}. This is expected to be a better choice than Silverman's rule in situations where there are many widely separated peaks in the data. In particular, \cite{Botev2010} introduced a non-parametric selection method that avoids the so-called \emph{normal reference rules} that may adversely affect plug-in estimators of the bandwidth.

We now give a more precise treatment of the choice of smoothing bandwidth for the linked boundaries model, as well as discussing the Mean Integrated Squared Error (MISE) defined by
\begin{align}
\label{MISE_def}
\mathrm{MISE}\{f\}(t)&=\mathbb{E}_{f_X}\left\{\int_0^1[f(x,t)-f_X(x,t)]^2dx\right\}\\
&=\int_0^1\mathbb\{\mathbb{E}_{f_X}[f(x,t)]-f_X(x)\}^2dx+\int_0^1\mathrm{Var}_{f_X}[f(x,t)]dx.
\end{align}
Often one is interested in the asymptotic approximation to the MISE, denoted AMISE, under the requirements that $t=t_n\downarrow0$ and $n\sqrt{t_n}\rightarrow\infty$, which ensure consistency of the estimator. The asymptotically optimal bandwidth is then the minimizer of the AMISE. For our continuous model of kernel density estimation we have the following result (proven in Appendix~\ref{AMISE_proof}) which gives the same $\mathcal{O}(n^{-4/5})$ rate of convergence as the Gaussian KDE on the whole real line.

\begin{theorem}[Asymptotic Bias and Variance of Diffusion Estimator]
\label{AMISE_1}
Let $t_n$ be such that $\lim_{n\rightarrow\infty}t_n=0$ and $\lim_{n\rightarrow\infty}n\sqrt{t_n}=\infty$ and suppose that $f_X\in C^2([0,1])$ (twice continuously differentiable) with $f_X(0)=rf_X(1)$. Then the following hold as $n\rightarrow\infty$:
\begin{enumerate}
	\item The integrated variance has the asymptotic behavior
	\begin{equation}
	\label{Var_asym}
	\int_0^1\mathrm{Var}_{f_X}[f(x,t_n)]dx\sim\frac{1}{2n\sqrt{\pi t_n}}.
	\end{equation}
	\item If $f_X'(0)=f_X'(1)$ then the integrated squared bias is
	\begin{equation}
	\label{Exp_asym}
	\int_0^1\left\{\mathbb{E}_{f_X}[f(x,t_n)]-f_X(x)\right\}^2dx\sim t_n^2\int_0^1\frac{1}{4}\left[f_X^{''}(x)\right]^2dx.
	\end{equation}
\item 	If $f_X'(0)\neq f_X'(1)$ then the integrated squared bias is
	\begin{equation}
	\label{Exp_asym2}
	\int_0^1\{\mathbb{E}_{f_X}[f(x,t_n)]-f_X(x)\}^2dx\sim t_n^{3/2}\frac{4-2\sqrt{2}}{3\sqrt{\pi}}\frac{r^2+1}{(1+r)^2}[f_X'(1)- f_X'(0)]^2.
	\end{equation}
\end{enumerate}
\end{theorem}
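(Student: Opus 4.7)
My plan is to handle the variance and the bias separately, exploiting the kernel representation $f(x,t)=\tfrac{1}{n}\sum_k K(r;x,X_k,t)$ from \eqref{kernel0} and the key observation that $\mathbb{E}_{f_X}[f(x,t)]=u(x,t)$, where $u$ solves \eqref{eq:pde:candidate} with the (deterministic) initial condition $f_X$.

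\textbf{Step 1 (variance, part 1).} By the i.i.d.\ assumption,
\[
\int_0^1 \mathrm{Var}_{f_X}[f(x,t_n)]\,dx
=\tfrac{1}{n}\iint_{[0,1]^2} K(r;x,y,t_n)^2 f_X(y)\,dx\,dy-\tfrac{1}{n}\int_0^1 u(x,t_n)^2 dx.
\]
The second term is $O(1/n)$ (since $u(\cdot,t_n)\to f_X$ in $L^2$), hence negligible compared with $1/(n\sqrt{t_n})$. For the first term I would use the representation \eqref{kernel0} (and the explicit rewriting \eqref{kernel1} referenced in the text) to split $K$ into the standard periodic heat kernel plus a bounded correction of size $O(1)$. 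Since the periodic heat kernel is the sum over $m\in\mathbb{Z}$ of Gaussians $\tfrac{1}{\sqrt{2\pi t}}\exp(-(x-y+m)^2/(2t))$, a direct computation yields $\int_0^1 K(r;x,y,t)^2\,dx \sim 1/(2\sqrt{\pi t})$ uniformly in $y\in[0,1]$, and integrating against $f_X$ (which has unit mass) gives \eqref{Var_asym}.

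\textbf{Step 2 (bias, smooth case $f_X'(0)=f_X'(1)$).} Here $f_X\in\mathcal{D}(\mathbb{A})$, so the semigroup $e^{-t\mathbb{A}/2}$ is analytic at $t=0$ on this initial datum. Thus $u(x,t)-f_X(x)=\tfrac{t}{2}f_X''(x)+O(t^2)$ in $L^2([0,1])$, and squaring then integrating gives \eqref{Exp_asym}. Equivalently, from the integral representation \eqref{thm_st} one checks that when both $f_X(0)=rf_X(1)$ and $f_X'(0)=f_X'(1)$ hold, the contour integrals over $\partial D^{\pm}$ cancel the non-smooth part of $\hat{f}_0$ at leading order, leaving only the interior Taylor contribution.

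\textbf{Step 3 (bias, mismatched case $f_X'(0)\neq f_X'(1)$).} The interior Taylor contribution is still $O(t)$ in $L^2$, but now the boundary integrals in \eqref{thm_st} produce a boundary layer which dominates. I would argue that, after integrating by parts in $\hat{f}_0(\pm k)=\int_0^1 e^{\mp ikx}f_X(x)\,dx$ to expose the boundary terms $f_X(0),f_X(1),f_X'(0),f_X'(1)$, only the mismatch $\Delta:=f_X'(1)-f_X'(0)$ survives at the leading non-interior order. Rescaling $k\mapsto\kappa/\sqrt{t}$ in the $\partial D^{\pm}$ integrals and $x\mapsto\sqrt{t}\,\xi$ near each endpoint, the determinant $\Upsilon(k)=2(1+r)(\cos k-1)$ becomes an $O(1)$ constant (since $\cos(\kappa/\sqrt{t})-1$ is controlled by deforming the contours inside $D^{\pm}$), and the boundary layer profile becomes an explicit combination of Gaussians and complementary error functions with amplitude $\sqrt{t}\,\Delta$. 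Squaring and integrating over $\xi\in[0,\infty)$ yields the $t^{3/2}$ scaling, and the universal constant $(4-2\sqrt{2})/(3\sqrt{\pi})$ arises from a definite integral of the form $\int_0^\infty [\mathrm{erfc}(\xi/\sqrt{2})]^2\,d\xi$ or a closely related one. The two boundaries contribute independently in the small-$t$ limit and weigh the amplitude by $r/(1+r)$ and $1/(1+r)$ respectively, producing the $(r^2+1)/(1+r)^2$ prefactor after squaring and summing.

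\textbf{Main obstacle.} The hard part is Step 3, specifically justifying that the two endpoints decouple (since the linked BC ties them together globally) and extracting the precise constant $(4-2\sqrt{2})/(3\sqrt{\pi})$. I expect that the contour deformation needed to replace $\partial D^{\pm}$ by steepest-descent contours and the careful bookkeeping between the interior $O(t)$ term, the boundary layer $O(t^{3/4})$ term in $L^2$, and the cross-terms between them, is where most of the technical work lies.
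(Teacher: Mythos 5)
Your Step 1 follows essentially the paper's route (variance decomposition, kernel written as periodic heat kernel plus bounded corrections) and is fine, except that your claim that $\int_0^1 K(r;x,y,t)^2dx\sim 1/(2\sqrt{\pi t})$ \emph{uniformly in $y$} is false near the endpoints (e.g.\ at $y=0$ the kernel collapses to $\tfrac{2r}{1+r}K_1(x,t)$, giving an extra factor $(\tfrac{2r}{1+r})^2$); the asymptotics survive only because the boundary region has measure $\mathcal{O}(\sqrt{t})$, which is exactly the integrated argument the paper makes. Your Step 2 is a genuinely different (semigroup) route from the paper's explicit kernel computation and is salvageable, but the stated bound $u(\cdot,t)-f_X=\tfrac{t}{2}f_X''+\mathcal{O}(t^2)$ in $L^2$ overreaches: under $f_X\in C^2$ with $f_X\in\mathcal{D}(\mathbb{A})$ you only get $\tfrac{t}{2}f_X''+\underline{o}(t)$ (an $\mathcal{O}(t^2)$ remainder would need $\mathbb{A}f_X\in\mathcal{D}(\mathbb{A})$), though $\underline{o}(t)$ suffices for \eqref{Exp_asym}.

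The genuine gap is Step 3, which is the heart of the theorem and which you yourself flag as unresolved: the decoupling of the two endpoints, the constant $(4-2\sqrt{2})/(3\sqrt{\pi})$, and the weight $(r^2+1)/(1+r)^2$ are asserted, not derived. Moreover, two specific claims in your sketch do not hold as stated. First, $\Upsilon(k)=2(1+r)(\cos k-1)$ does not become an $\mathcal{O}(1)$ constant under $k\mapsto\kappa/\sqrt{t}$ on the deformed contours: off the real axis $\Upsilon(k)^{-1}\sim e^{ik}/(1+r)$, so it grows/decays exponentially and the localization must come from cancellation against the exponentials $[(1+r)e^{ik}-2r]$, etc., in the numerator --- precisely the bookkeeping you defer. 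Second, the universal constant does not come from $\int_0^\infty[\mathrm{erfc}(\xi/\sqrt{2})]^2d\xi$; the correct boundary-layer profile is $\int_0^{t}K_1(x,s)\,ds$, i.e.\ (after rescaling) $\tau(y)=e^{-y^2}-\sqrt{\pi}\,|y|\,\mathrm{erfc}(|y|)$, with $\int_0^\infty\tau(y)^2dy=\tfrac{1}{3}(\sqrt{2}-1)\sqrt{\pi}$, which is what produces $\tfrac{4-2\sqrt{2}}{3\sqrt{\pi}}$. The paper avoids the Fourier-side steepest-descent analysis entirely: it subtracts the affine function $f_X(0)+x(1-r)f_X(1)$ (which the kernel reproduces exactly, by the linked boundary condition), writes the bias as $\int_0^1K_1(x-y,t)w(x,y)\,dy$ plus an explicit $\mathcal{O}(t)$ term for an auxiliary $w(x,y)$ vanishing at $y=0,1$, differentiates in $t$ and integrates by parts so the derivative mismatch appears as the explicit boundary term $K_1(x,t)[f_X'(1)-f_X'(0)]\tfrac{rx-x-r}{1+r}$, then integrates in time; the weight $(rx-x-r)^2$ evaluated at the two endpoints is what yields $(r^2+1)/(1+r)^2$. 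Without carrying out either this real-space computation or a complete version of your contour analysis, part 3 of the theorem is not proved.
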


\begin{proof}
See Appendix \ref{AMISE_proof}.
\end{proof}

A direct consequence of this result is that we can select the stopping time $t$ or bandwidth to minimize the AMISE.

\begin{corollary}[Asymptotically Optimal Bandwidth Choices]
Combining the leading order bias and variance terms gives the asymptotic approximation
to the MISE:
\begin{enumerate}
	\item If $f_X'(1)=f_X'(0)$ then
	\begin{equation}
	\label{AMISE}
	\mathrm{AMISE}\{f\}(t_n)=\frac{1}{2n\sqrt{\pi t_n}}+t_n^2\int_0^1\frac{1}{4}\left[f_X^{''}(x)\right]^2dx.
	\end{equation}
	Hence, the square of the asymptotically optimal bandwidth is
	$$
	t^*=(2n\sqrt{\pi}\|f_X^{''}\|_{L^2}^{2})^{-2/5}
	$$
	with the minimum value
	$$
	\min_t\mathrm{AMISE}\{f\}(t)=\frac{5\|f_X^{''}\|_{L^2}^{2/5}}{2^{14/5}\pi^{2/5}}n^{-4/5}.
	$$
	\item If $f_X'(1)\neq f_X'(0)$ then
	\begin{equation}
	\label{AMISE2}
	\begin{split}
	\mathrm{AMISE}\{f\}(t_n)&=\frac{1}{2n\sqrt{\pi t_n}}+t_n^{3/2}\frac{4-2\sqrt{2}}{3\sqrt{\pi}}\frac{r^2+1}{(1+r)^2}[f_X'(1)- f_X'(0)]^2\\
	&=\frac{1}{2n\sqrt{\pi t}}+t^{3/2}\frac{A(r)}{3}\left[f_X'(1)- f_X'(0)\right]^2.
	\end{split}
	\end{equation}
	Hence, the square of the asymptotically optimal bandwidth is
	$$
	t^*=(2n\sqrt{\pi}A(r))^{-1/2}\left|f_X'(1)- f_X'(0)\right|^{-1}
	$$
	with the minimum value
	$$
	\min_t\mathrm{AMISE}\{f\}(t)=\frac{2^{5/4}\sqrt{\left|f_X'(1)- f_X'(0)\right|}}{3\pi^{3/8}}A(r)^{1/4}n^{-3/4}.
	$$
\end{enumerate}
\end{corollary}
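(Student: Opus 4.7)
The plan is straightforward calculus: by Theorem \ref{AMISE_1}, the AMISE is asymptotically a sum of two power-law terms in $t$, one decreasing (the integrated variance) and one increasing (the integrated squared bias). In both cases, the AMISE has the form $\phi(t) = C_1 t^{-1/2} + C_2 t^{\alpha}$ with $C_1 = 1/(2n\sqrt{\pi})$, and either $\alpha = 2$ (case 1) or $\alpha = 3/2$ (case 2). Since $\phi$ is smooth, strictly convex on $(0,\infty)$ (the second derivative is positive), and blows up at both endpoints $t \downarrow 0$ and $t \to \infty$, there is a unique minimizer, found by setting $\phi'(t) = 0$.

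For case 1, I would write out $\phi'(t) = -\tfrac{1}{2}C_1 t^{-3/2} + 2C_2 t = 0$, which rearranges to $t^{5/2} = C_1/(4C_2)$ and therefore $t^* = (C_1/(4C_2))^{2/5}$. Substituting $C_2 = \tfrac{1}{4}\|f_X''\|_{L^2}^2$ gives the stated formula $t^* = (2n\sqrt{\pi}\|f_X''\|_{L^2}^2)^{-2/5}$. To compute the minimum AMISE, I would use the first-order condition to note that at $t^*$ the bias term equals a rational multiple of the variance term: specifically $4C_2 (t^*)^{5/2} = C_1$, so $C_2 (t^*)^2 = \tfrac{1}{4}C_1 (t^*)^{-1/2}$, giving $\phi(t^*) = \tfrac{5}{4} C_1 (t^*)^{-1/2}$. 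Then I would substitute $(t^*)^{-1/2} = (2n\sqrt{\pi}\|f_X''\|_{L^2}^2)^{1/5}$ and collect powers of $2$, $\pi$, and $\|f_X''\|_{L^2}$ to arrive at the claimed $5\|f_X''\|_{L^2}^{2/5}/(2^{14/5}\pi^{2/5}) \cdot n^{-4/5}$.

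For case 2, I would proceed identically with $C_2$ replaced by $C_3 = \tfrac{A(r)}{3}[f_X'(1)-f_X'(0)]^2$ and $\alpha = 3/2$. Setting $\phi'(t) = -\tfrac{1}{2}C_1 t^{-3/2} + \tfrac{3}{2}C_3 t^{1/2} = 0$ gives $t^{*\,2} = C_1/(3C_3) = 1/(2n\sqrt{\pi}A(r)[f_X'(1)-f_X'(0)]^2)$, yielding $t^* = (2n\sqrt{\pi}A(r))^{-1/2}|f_X'(1)-f_X'(0)|^{-1}$. At the minimum, the first-order condition gives $3C_3(t^*)^{3/2} = C_1 (t^*)^{-1/2}$, so the bias term is $\tfrac{1}{3}$ times the variance term and $\phi(t^*) = \tfrac{4}{3}C_1(t^*)^{-1/2}$. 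Substituting $(t^*)^{-1/2} = (2n\sqrt{\pi}A(r))^{1/4}|f_X'(1)-f_X'(0)|^{1/2}$ and simplifying the constants gives the claimed $\tfrac{2^{5/4}\sqrt{|f_X'(1)-f_X'(0)|}}{3\pi^{3/8}}A(r)^{1/4}n^{-3/4}$.

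There is no real obstacle beyond careful bookkeeping of exponents; the only tiny subtlety worth mentioning is that case 2 requires $f_X'(0) \neq f_X'(1)$ so that $C_3 > 0$ and the critical point is well-defined, which is precisely the hypothesis of that case. I would also remark in passing that the rate $n^{-3/4}$ in case 2 is strictly slower than $n^{-4/5}$ in case 1, consistent with the fact that a mismatch in first derivatives at the boundaries produces an additional $\mathcal{O}(t^{3/4})$ boundary bias that dominates the interior $\mathcal{O}(t)$ bias.
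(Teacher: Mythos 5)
Your proposal is correct and matches the paper's (implicit) argument: the corollary is stated as a direct consequence of Theorem \ref{AMISE_1}, obtained exactly as you do by minimizing the two-term expression $C_1 t^{-1/2}+C_2 t^{\alpha}$ via the first-order condition and substituting back, and your exponent bookkeeping reproduces the stated constants in both cases.
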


A few remarks are in order. First, it is interesting to note that in the case of $f_X'(1)=f_X'(0)$, the optimum choice $t^*$ and the minimum AMISE do \textit{not} depend on $r$, and are the same as the more familiar `whole line' situation --- in other words, we can confidently use existing methods in the literature (such as recommended above) to choose a stopping time. Second, it seems plausible that we could estimate $f_X'(1)- f_X'(0)$ (or the value of $r$) \textit{adaptively} and change the boundary conditions in the model (\ref{eq:pde:candidate}) accordingly. A full discussion of solving the heat equation with linked boundary conditions for the first spatial derivative is beyond the scope of this paper but can be done using the same methods we present here. Future work will aim to incorporate an adaptive estimate of the true boundary conditions (both for the density function and its first derivative - we do this for the density function in \S \ref{synth_num_exam}) and resulting adaptive boundary conditions. We mention a result in this direction which will appear when we compare our model to that of \cite{Botev2010}, whose code is based around the discrete cosine transform, the continuous version of which solves the heat equation subject to the boundary conditions
$
f'_c(0)=f_c'(1)=0.
$
We have used the subscript $c$ to avoid confusion with our solution $f$ to (\ref{eq:pde:candidate}). The analogous result to Theorem \ref{AMISE_1} is the following theorem which can be proven using the same techniques and hence we have omitted the proof. Similarly, one can then derive the optimum choice of $t$ and the minimum AMISE $\mathcal{O}(n^{-3/4})$ (slower rate) under the condition that $(f_X'(1),f_X'(0))\not=(0,0)$.

\begin{theorem}[Boundary Effects on Asymptotic Bias]
\label{AMISE_2}
Let $t_n$ be such that $\lim_{n\rightarrow\infty}t_n=0$ and also $\lim_{n\rightarrow\infty}n\sqrt{t_n}=\infty$. Suppose that $f_X\in C^2([0,1])$. Then the following hold as $n\rightarrow\infty$:
\begin{enumerate}
	\item The integrated variance has the asymptotic behavior
	\begin{equation}
	\label{Var_asym_2}
	\int_0^1\mathrm{Var}_{f_X}[f_c(x,t_n)]dx\sim\frac{1}{2n\sqrt{\pi t_n}}.
	\end{equation}
	\item If $f_X'(0)=f_X'(1)=0$ then
	\begin{equation}
	\label{Exp_asym_2}
	\int_0^1\{\mathbb{E}_{f_X}[f_c(x,t_n)]-f_X(x)\}^2dx\sim t_n^2\int_0^1\frac{1}{4}\left[f_X^{''}(x)\right]^2dx.
	\end{equation}
\item 	If $f_X'(0)\neq 0$ or $f_X'(1)\neq 0$ then
	\begin{equation}
	\label{Exp_asym2_2}
	\int_0^1\{\mathbb{E}_{f_X}[f_c(x,t_n)]-f_X(x)\}^2dx\sim t_n^{3/2}\frac{4-2\sqrt{2}}{3\sqrt{\pi}}[f_X'(1)^2+f_X'(0)^2].
	\end{equation}
\end{enumerate}
\end{theorem}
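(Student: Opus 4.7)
The plan is to reuse the architecture of the proof of Theorem~\ref{AMISE_1}, simply replacing the linked-boundary kernel by the Neumann heat kernel associated with the cosine transform. Writing $G_t(z)=(2\pi t)^{-1/2}\exp(-z^2/(2t))$, the method of images gives
\begin{equation*}
K_c(x,y,t)=\sum_{m\in\mathbb{Z}}\bigl[G_t(x-y-2m)+G_t(x+y-2m)\bigr],
\end{equation*}
and equivalently $K_c(x,y,t)=1+2\sum_{n\geq 1}\cos(n\pi x)\cos(n\pi y)e^{-n^2\pi^2 t/2}$. Only the two ``nearest'' terms $G_t(x-y)$ and the reflections $G_t(x+y)$, $G_t(x+y-2)$ are not exponentially small in $t^{-1}$, so after a standard tail estimate, the analysis reduces to a whole-line Gaussian KDE plus two symmetric boundary-reflection corrections. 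Conservation of mass $\int_0^1 K_c(x,y,t)\,dy=1$ is inherited from the $n=0$ cosine mode, and will be the lever that rewrites the bias as a weighted average of $f_X(y)-f_X(x)$.

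For the variance, use $\mathrm{Var}_{f_X}[f_c(x,t)] = n^{-1}\int_0^1 K_c(x,y,t)^2 f_X(y)\,dy - n^{-1}(\mathbb{E}_{f_X}[f_c(x,t)])^2$. The second term is uniformly $O(n^{-1})$ and is swallowed by the dominant term. For the first, the diagonal contribution $G_t(x-y)^2$ yields $\int_0^1\!\int_0^1 G_t(x-y)^2 f_X(y)\,dy\,dx \sim \frac{1}{2\sqrt{\pi t_n}}$ by a short Laplace argument and $\int G_t^2\,dz = (2\sqrt{\pi t})^{-1}$. The boundary image contributions, e.g. $\int_0^\infty\!\int_0^\infty G_t(x+y)^2\,dx\,dy = \tfrac{1}{4\pi}$, and the cross term $\int G_t(x-y)G_t(x+y)f_X(y)$, are $O(1)$, hence negligible compared with the leading $O(t_n^{-1/2})$. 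This gives \eqref{Var_asym_2}.

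For the bias, use the mass-conservation rewriting $\mathbb{E}_{f_X}[f_c(x,t)]-f_X(x)=\int_0^1 K_c(x,y,t)[f_X(y)-f_X(x)]\,dy$. In the \emph{interior regime} (distance of $x$ from $\{0,1\}$ is $\gg\sqrt{t_n}$), all image terms are exponentially suppressed and $K_c(x,y,t_n)$ acts like the free-space Gaussian; a Taylor expansion of $f_X$ about $x$ with the standard Gaussian moment identities yields the pointwise bias $\tfrac{t_n}{2}f_X''(x)+O(t_n^2)$. In the \emph{boundary regime}, say $x\in[0,\delta_n]$ for some $\delta_n=o(1)$ with $\delta_n/\sqrt{t_n}\to\infty$, one uses the even extension $\tilde f_X(y):=f_X(|y|)$ to write $\mathbb{E}_{f_X}[f_c(x,t_n)]\approx (G_{t_n}\ast\tilde f_X)(x)$ up to exponentially small corrections from $x=1$ and far images. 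Because $\tilde f_X$ is continuous but only $C^1$ at $0$ precisely when $f_X'(0)=0$, writing $\tilde f_X(y)-f_X(x) = f_X'(0)(|y|-x) + \tfrac12 f_X''(0)(y^2-x^2) + O(|y|^3+x^3)$ and changing variables $y = x + \sqrt{t_n}u$, $x=\sqrt{t_n}\xi$, produces
\begin{equation*}
\mathbb{E}_{f_X}[f_c(x,t_n)]-f_X(x)=\sqrt{t_n}\,f_X'(0)\,h(\xi)+O(t_n),\qquad h(\xi):=\sqrt{\tfrac{2}{\pi}}e^{-\xi^2/2}-\xi\,\mathrm{erfc}(\xi/\sqrt{2}).
\end{equation*}
When $f_X'(0)=0$, this leading boundary term vanishes and the interior asymptotic takes over uniformly; integrating the squared interior bias then gives \eqref{Exp_asym_2} after observing that the boundary corrections contribute only $o(t_n^2)$. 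When $f_X'(0)\neq 0$ or $f_X'(1)\neq 0$, squaring and integrating over the boundary layer gives $t_n^{3/2}[f_X'(0)]^2\int_0^\infty h(\xi)^2\,d\xi$, plus the symmetric contribution from $x=1$; the contribution from the interior is of order $t_n^2 = o(t_n^{3/2})$ and is therefore subleading.

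The main obstacle is the explicit constant. Expanding $h(\xi)^2$, one must evaluate $\int_0^\infty\bigl[\tfrac{2}{\pi}e^{-\xi^2}-2\xi\sqrt{2/\pi}e^{-\xi^2/2}\mathrm{erfc}(\xi/\sqrt 2)+\xi^2\mathrm{erfc}(\xi/\sqrt 2)^2\bigr]d\xi$ and identify it with $\tfrac{4-2\sqrt{2}}{3\sqrt{\pi}}$; the first piece gives $1/\sqrt{\pi}$ immediately, while the $\mathrm{erfc}$ pieces are handled by integration by parts and the standard identity $\int_0^\infty\mathrm{erfc}(\xi/\sqrt 2)^2 d\xi = \sqrt{2/\pi}(2-\sqrt{2})$ (and analogous identities for the other moment). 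The remaining bookkeeping — showing that the cross term between the Gaussian and its nearest image is swallowed by the boundary contribution already identified, and that the two boundaries contribute additively because the corresponding layers are disjoint for $t_n$ small — is routine; it parallels exactly the corresponding step in the proof of Theorem~\ref{AMISE_1} with the even extension playing the role that the linked-boundary Green function played there.
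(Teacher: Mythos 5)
Your proposal is correct, and it is worth noting that the paper itself omits a proof of this theorem, stating only that it follows by the techniques used for Theorem~\ref{AMISE_1}; so the natural comparison is with that proof. Your route differs in execution. The paper's template extracts the boundary-layer bias indirectly: it writes the bias as $\int K_1(x-y,t)w(x,y)\,dy$ plus a derivative term, differentiates in $t$, integrates by parts so that the non-matching derivative values appear as a boundary term $K_1(x,t)[\cdots]$, and then integrates in time to get $\int_0^t K_1(x,s)\,ds$, which is evaluated via the function $\tau(x)=e^{-x^2}-\sqrt{\pi}\,|x|\,\mathrm{erfc}(|x|)$ and the identity $\int_0^\infty\tau^2=\tfrac{1}{3}(\sqrt{2}-1)\sqrt{\pi}$. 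You instead use the method of images for the Neumann kernel, mass conservation, and a direct boundary-layer rescaling $x=\sqrt{t_n}\,\xi$ of the even extension, producing the profile $h(\xi)=\sqrt{2/\pi}\,e^{-\xi^2/2}-\xi\,\mathrm{erfc}(\xi/\sqrt{2})$; since $h(\xi)=\sqrt{2/\pi}\,\tau(\xi/\sqrt{2})$, your constant $\int_0^\infty h^2\,d\xi=\tfrac{4-2\sqrt{2}}{3\sqrt{\pi}}$ agrees with the paper's, and the additivity of the two disjoint layers gives $f_X'(0)^2+f_X'(1)^2$ as stated. Your variance argument (diagonal Gaussian term dominant, image and cross terms $\mathcal{O}(1)$) likewise matches the paper's treatment. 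Both approaches deliver the same result; yours is arguably more transparent about where the $t_n^{3/2}$ rate comes from, while the paper's avoids an explicit interior/boundary splitting. One small point to tighten: with only $f_X\in C^2$, the remainder in your boundary-layer expansion is $\mathcal{O}(t_n)+o\big((x+\sqrt{t_n})^2\big)$ rather than uniformly $\mathcal{O}(t_n)$, so you should fix a concrete layer width (e.g. $\delta_n=t_n^{2/5}$, which satisfies $\delta_n/\sqrt{t_n}\to\infty$ and $\delta_n^5=o(t_n^{3/2})$) to confirm these remainders are negligible after squaring and integrating; this is bookkeeping, not a gap in the idea.
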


\section{Numerical Approximations of the PDE Estimator}

Before giving numerical examples with the new estimator, we consider practical methods for solving the PDE \eqref{eq:pde:candidate}, in order to evaluate the KDE $
f(x,t)=\frac{1}{n}\sum_{k=1}^nK(r;x,X_k,t),$ on a regular grid. There are two different practical computational methods to compute the density estimator based on the PDE \eqref{eq:pde:candidate}:
\begin{description}
 \item[1. Series Expansion:] Essentially solving the continuous model \eqref{eq:pde:candidate} via the series or contour integral representation in Theorem~\ref{rep_thm}. 
\item[2. Backward Euler method:] Solving a discretized or binned version of \eqref{eq:pde:candidate}, as explained in the rest of this section. In Theorem~\ref{thm:discrete:model:converges}, we show that this binned estimator converges to the continuous PDE estimator.
\end{description}
The two methods have relative advantages and disadvantages. The backward Euler method is a first order finite difference method (however, this is not a problem in practice as argued below), but it is simple and easy to use, especially if the initial data is already discretely binned. The backward Euler method also maintains the key property of positivity and satisfies the same maximum principle properties as the continuous solution (see Appendix~\ref{4 corners matrix} and Lemma~\ref{thm:discrete_bound}). The reason for not using second order methods such as Crank--Nicolson is that for large time steps this would not preserve non-negativity of the solution. In other words, the discrete solution can no longer be interpreted as a probability distribution (a well-known result says that any general linear method that is unconditionally positivity preserving for all positive ODEs must have order $\leq 1$ \cite{bolley1978conservation}). However, methods such as Crank--Nicolson can also easily be used for the discrete model if desired, but for brevity we do not discuss such methods further. The series expansion of the continuous PDE model is typically highly accurate for $t>0$, but less easy to implement. We provide MATLAB codes for both methods:\\\noindent{}\url{https://github.com/MColbrook/Kernel-Density-Estimation-with-Linked-Boundary-Conditions}.

To derive the appropriate time-stepping method, we do the following:
\begin{enumerate}
\item We approximate the exact solution $f$ by a vector $\v u$. That is, $u(x_i; \cdot) \approx  f(x_i,\cdot)$. Here $x_i = i h$ is the $i$th grid point on the grid of $m+2$ equally spaced points in the domain $[0,1]$, for $i=0,1,\ldots,m,m+1$.
 The spacing between two consecutive grid points is
$
h = \frac{1}{m+1}.
$
Note here that $m$ is typically smaller than $n$, the number of samples that form the empirical measure.
\item The two boundary conditions in \eqref{eq:pde:candidate} give two equations involving values at the two boundary nodes, i.e. at node $0$ and at node $m+1$.
That is,
  \begin{eqnarray}
 \quad  u_0 &=& r u_{m+1}, \\
u_1 - u_0 &=& u_{m+1} - u_{m} .
 \end{eqnarray}
This motivates us to make the following \textit{definitions} for the boundary nodes:
 \begin{equation}
 u_0 \idef \frac{r}{r+1} (u_1+u_m) , \qquad u_{m+1} \idef \frac{1}{r+1} (u_1+u_m) .
 \label{eq:def:ghost:nodes}
 \end{equation}
We are left with a set of $m$ equations involving $m$ unknown values $u_1, \ldots, u_m$, at the $m$ interior nodes $1, \ldots, m$, where we use a standard second-order finite difference approximation of the (spatial) second derivative.
\item 
We consider the corresponding $m \times m$ \emph{four-corners matrix} with the following structure:
\begin{equation}
\m A =
\left(
\begin{tabular}{ccccc}
$2-\frac{r}{r+1}$ & $-1$   & &   & $-\frac{r}{r+1}$ \\
 $-1$ & $2$ & $-1$ \\
  & $\ddots$ & $\ddots$ & $\ddots$ \\
 & & $-1$ & $2$ & $-1$ \\
 $-\frac{1}{r+1}$ &  & &   $-1$ & $-\frac{1}{r+1}$
\end{tabular}
\right).
\label{eq:four:corners:matrix}
\end{equation}
\end{enumerate}

Given a time $T$ at which we wish to evaluate the solution, we consider a time step $\Delta t =2h^2$. For ease of the analysis, we assume that $T$ is a multiple of $\Delta t$, though his can be avoided by making the last time step smaller if needed. We use a superscript $k$ to denote the solution at time $k\Delta t$ (i.e. the $k$th step), then the backwards Euler method can be written as
\begin{equation}
\label{backwards_euler}
\v u^{k+1}=\left(\m I+\m A\right)^{-1}\v u^k, \quad k=0,..., T/\Delta t -1,
\end{equation}
where $\m I$ denotes the $m\times m$ identity matrix. The matrix inverse can be applied in $\mathcal{O}(m)$ operations using the fact that $\m A$ is a rank one perturbation of a tridiagonal matrix. Even though we take small time steps, the total time $T=\mathcal{O}(n^{-2/5})$ is small. It follows that the total complexity is $\mathcal{O}(m^3n^{-2/5})$, giving an error (in the interior) of order $\mathcal{O}(h^2)=\mathcal{O}(m^{-2})$. The error of the continuous model scales as $\mathcal{O}(n^{-2/5})$. If there is freedom in selecting the number of bins $m+2$, this suggests choosing $m=\mathcal{O}(n^{1/5})$ which leads to a modest $\mathcal{O}(n^{1/5})=\mathcal{O}(m)$ complexity. A key property of the matrix \eqref{eq:four:corners:matrix} is that it has zero column sum, off-diagonals are negative or zero, and the main diagonal entries are positive. This allows the interpretation of \eqref{backwards_euler} as a \textit{discrete-time Markov process}. In Appendix \ref{4 corners matrix}, we prove the following theorem for completeness (using explicit formulae for the eigenvalues and eigenvectors of $\m A$).
 \begin{theorem}[Convergence of Binned to Diffusion Estimator]
\label{thm:discrete:model:converges} 
The solution of the binned estimator \eqref{backwards_euler} with the four corner matrix in \eqref{eq:four:corners:matrix} converges to the solution of the continuous problem \eqref{eq:pde:candidate} as $m \rightarrow \infty$:
\[
\sup_{\epsilon\leq t\leq T}\sup_{0\leq k\leq m+1}|u(k/(m+1);t)-f(k/(m+1);t)|\rightarrow 0,\qquad n \rightarrow \infty.
\]
\end{theorem}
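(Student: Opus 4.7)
The plan is to compare the discrete backward Euler scheme and the continuous diffusion semigroup through their (generalized) spectral decompositions, mirroring the series representation \eqref{series2} of Theorem \ref{rep_thm}. The structural observation driving the proof is that the four-corners matrix $\m A$ from \eqref{eq:four:corners:matrix} inherits a Jordan block structure analogous to the continuous operator $\mathbb{A}$ in \eqref{domain}: when $r\neq 1$ it is not diagonalizable and admits pairs of (generalized) eigenvectors that are natural discretizations of $\sin(k_n x)$ and $\phi_n(x)/(2k_n)=(r+(1-r)x)\cos(k_nx)/(2k_n)$, with $k_n=2\pi n$. Once the spectral data are in hand, the step $(\m I+\m A)^{-1}$ acts on each block as a rational analogue of the semigroup factor $\exp(-k_n^2 t/2)$, and convergence reduces to a term-by-term argument for the two series.

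First I would compute explicit eigenvalues and generalized eigenvectors of $\m A$ via the ansatze $\m v_n=(\sin(k_n ih))_{i=1}^m$ and $\m w_n=((r+(1-r)ih)\cos(k_nih))_{i=1}^m$ with $h=1/(m+1)$, together with the $\lambda=0$ modes coming from the discretization of the linear function $\phi_0$. A direct calculation using the standard second-difference identity and the modified corner entries should yield a $2\times 2$ Jordan block of the form $\m A\m v_n=\lambda_n\m v_n$, $\m A\m w_n=\lambda_n\m w_n+c_n\m v_n$ with $\lambda_n=4\sin^2(k_nh/2)$ and coupling constant $c_n=O(h)$ chosen to mirror the continuous coupling $1-r$. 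Because the choice $\Delta t=2h^2$ gives $(1+\lambda_n)^{-T/\Delta t}\to\exp(-k_n^2T/2)$ as $m\to\infty$ for each fixed $n$, a Taylor expansion yields pointwise convergence, uniformly on $t\in[\epsilon,T]$, of the discrete block semigroup to the corresponding block in \eqref{series2}, including the non-separable correction factor $k_n t(1-r)$ produced by the Jordan structure. Expanding the binned initial datum $\v u^0$ in this basis then gives a discrete analogue of \eqref{series2} whose first $N$ terms converge to the first $N$ terms of the continuous series at the interior grid nodes.

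The principal obstacle, and the reason the lower cutoff $\epsilon>0$ is essential, is a uniform tail bound for $n>N$ independent of $m$. On the continuous side, the Gaussian factor $\exp(-k_n^2\epsilon/2)$ dominates any polynomial growth in $k_n$ and in the generalized Fourier coefficients, so the tail can be made arbitrarily small. The discrete counterpart $(1+\lambda_n)^{-k}\le\exp(-c\lambda_n k)\le\exp(-c' k_n^2\epsilon)$ for $k\ge\epsilon/\Delta t$ follows from elementary inequalities once $\lambda_n$ is bounded, but because $\m A$ is non-normal its biorthogonal (dual) basis may a priori grow with $n$; the required uniform tail bound therefore reduces to a Riesz-basis estimate for the generalized eigenvectors of $\m A$ that is stable as $m\to\infty$. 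This is the heart of the proof and is the discrete analogue of the Riesz-basis property of $\{\sin(k_nx),\phi_n\}$ in $L^2([0,1])$ that underlies Theorem \ref{rep_thm}; I expect it to follow from the block-triangular similarity above combined with the Parseval identity for the trigonometric part. With this tail bound in place, convergence of finitely many terms on $[\epsilon,T]$ delivers uniform convergence at the interior grid nodes, and the extension to the boundary nodes $i\in\{0,m+1\}$ is immediate from the ghost-node definition \eqref{eq:def:ghost:nodes} together with the continuity of $f(\cdot,t)$ guaranteed by Theorem \ref{wk_thm}.
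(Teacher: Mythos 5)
There is a genuine gap, and it sits exactly at the structural claim your whole argument rests on. You assert that for $r\neq 1$ the matrix $\m A$ of \eqref{eq:four:corners:matrix} is not diagonalizable and admits exact $2\times 2$ Jordan blocks $\m A\m v_n=\lambda_n\m v_n$, $\m A\m w_n=\lambda_n\m w_n+c_n\m v_n$. This is false: the four-corners matrix is diagonalizable for every finite $m$. Its spectrum consists of two interlacing families of \emph{simple} eigenvalues, $2-2\cos(2\pi k/m)$ and $2-2\cos\bigl(2\pi k/(m+1)\bigr)$, with eigenvectors $v^k_j=r\sin((j-1)\theta_k)-\sin(j\theta_k)$ and $w^k_j=\sin(j\theta_k)$ (see \eqref{eq:eigen:vector:formulae} and the explicit formulas of Yueh--Cheng type cited in Appendix~\ref{4 corners matrix}). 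The double eigenvalues $k_n^2$ and the Jordan structure of the continuous operator $\mathbb{A}$ only emerge in the limit $m\to\infty$, as the two eigenvalues in each pair coalesce. So the "direct calculation" you propose would not produce the block structure you need. Worse, this defect propagates into the step you yourself call the heart of the proof: a Riesz-basis bound for the eigenvectors of $\m A$ that is uniform in $m$ does not exist, precisely because within each nearly degenerate pair the eigenvectors become asymptotically parallel (note that one must rescale by $m/(4\pi^2k^2)$, as in $\frac{m}{4\pi^2k^2}[(r-1)w^k_j-v^k_j]\to\phi_k(x)$, to extract the second limit function). Hence the biorthogonal coefficients in your expansion of $\v u^0$ grow like $m$, the condition number of the eigenbasis diverges, and the term-by-term tail estimate you sketch does not close. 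To repair it you would have to treat each near-degenerate pair as a single $2\times 2$ block and prove a stability bound for these blocks uniformly in $m$ and $n$ --- which is essentially a full stability analysis in disguise, and none of it is supplied.

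For comparison, the paper takes a much shorter route: it invokes the Lax equivalence theorem (consistency plus stability implies convergence), with well-posedness already secured by Theorem~\ref{wk_thm}, consistency being the standard second-difference computation, and stability established in the maximum norm by Lemma~\ref{thm:discrete_bound}. That lemma avoids the non-normality issue entirely: it shows $(\m I+\m A)^{-1}$ preserves non-negativity and column sums (a discrete Markov/maximum-principle argument), and then bounds $\|(\m I+\m A)^{-K}\|_{l^\infty}$ uniformly in $K$ by explicitly propagating the all-ones vector through the eigen-decomposition. If you want to keep a spectral flavor in your write-up, the honest statement is that the spectral data of $\m A$ converge to the generalized spectral data of $\mathbb{A}$ (as recorded in Appendix~\ref{4 corners matrix}), but the convergence proof itself should go through a uniform stability bound of the Lax type rather than through a mode-by-mode expansion in an ill-conditioned eigenbasis.
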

 
\begin{proof}
See Appendix \ref{4 corners matrix}.
\end{proof}



Further interesting properties of the discrete system are discussed in Appendix \ref{4 corners matrix}. In Theorem \ref{thm:discrete:model:converges}, we have restricted $t\geq \epsilon>0$ to include the possibility that the initial condition may not be a proper function, but an empirical measure. We finally remark that sometimes the solution is needed at later times (e.g. $\mathcal{O}(1)$), for example when querying the solution at various times $t$ as part of minimizing least squares cross validation to determine a good choice of $T$. In that case, we recommend computing the matrix exponential
\[\textstyle
 \v u(t) = \exp \left(-\frac{t}{2h^2} \m A  \right) \v u(0).
 \]
There are many possible methods to compute the matrix exponential \cite{moler2003nineteen}, such as MATLAB's \texttt{expm} code based on \cite{higham2005scaling,al2010new}.

\section{Numerical Experiments}

\subsection{Numerical examples with synthetic data}
\label{synth_num_exam}

First, we test the estimator on examples where the true density $f_X$ is known. We begin with the trimodal distribution shown in Figure \ref{syn1a}. We will demonstrate two versions of the method. First, when the exact value of $r$ is known (labelled ``Linked 1''), and second where we estimate the value of $r$ by
$$
r_{\mathrm{est}}=\frac{\sum_{j=1}^n\chi_{<n^{-1/2}}(X_j)}{\sum_{j=1}^n\chi_{>1-n^{-1/2}}(X_j)}
$$
(labelled ``Linked 2''). We expect both to perform similarly for sufficiently large $n$. For stopping times, we have used the software that adaptively chooses the bandwidth, discussed in Section \ref{asymp_properties}. In other words, we do not give our algorithms any information other than the given sample. We compare with three other methods. The first is the density estimation proposed in \cite{Botev2010} based on the discrete cosine transform (labelled ``Cosine''). The second is the well-known and arguably state-of-the-art beta kernel method of \cite{chen1999beta}, which we label ``Beta'' in the plots. This method is free from boundary bias, at the cost of an increased boundary variance. Finally, we also compare with a method which uses copula kernels \cite{jones2007kernel} and which has been found to be competitive with the beta kernel approach of \cite{chen1999beta}. This method has an automatic bandwidth selector which we shall use, and we label it ``Copula'' in the plots. The latter two methods are freely available in the R package \texttt{evmix} \cite{EV} which can be found at \url{https://CRAN.R-project.org/package=evmix}.

We estimate the error using the $L^2$ and $L^\infty$ norms at the points $l\times 10^{-3}$ for $l=0,...,10^3$. The only change is when considering the copula method, where we take $l=1,...,10^3-1$ instead since we found this method to be unstable near the boundaries. Figure \ref{syn1a} shows a typical approximation of the distribution function using our proposed method and the other methods for a sample size of $n=10^4$. Our proposed method is more accurate near the boundaries of the domain (see magnified section of plots) and behaves similarly in the middle of the domain. We found that using the estimate $r_{\mathrm{est}}$ instead of the exact value of $r$ did not have a great effect on the error. In other words, we can apply our model without needing to know the value of $r$.

Figure \ref{syn1b} (left) shows the $L^2$ measure of error averaged over $100$ independent samples for each $n$. The $L^2$ errors for both ``Linked'' methods and the ``Cosine'' method agreed almost perfectly with the minimum AMISE and the analysis in Section~\ref{asymp_properties} for large $n$. Using our model with an estimate of $r$ increases the convergence rate from $\mathcal{O}(n^{-3/4})$ to $\mathcal{O}(n^{-4/5})$. Both ``Linked'' methods and the ``Cosine'' method are found to be more accurate than the ``Beta'' and ``Copula'' methods. The tailing-off convergence for the ``Copula'' method was due to a need to implement a lower bound for the bandwidth. Below this limit, we found the ``Copula'' method to be unstable. Figure \ref{syn1b} (right) shows the same plot but now for the $L^\infty$ measure of error. Here we see a more pronounced difference between the methods, with both ``Linked'' methods producing much smaller errors than the other methods. We found the same behavior in these plots for a range of other tested distributions. Finally, we comment on the CPU times for each method, shown in Figure \ref{syn1c} (averaged over the 100 samples for each $n$). In order to produce a fair comparison, we have included the CPU time taken for automatic bandwidth selection when using the ``Linked'' methods. All methods appear to have CPU times that grow linearly with $n$. The ``Cosine'' method in fact scales like $\mathcal{O}(n\log(n))$ due to the use of the discrete cosine transform. The linked estimator is faster by about an order of magnitude than the other methods. This is due to the exponential decay of the series for $t>0$ - only a small number of terms need to be summed in order to get very accurate results.

\begin{figure}
\centering
\includegraphics[width=0.495\textwidth,trim={32mm 95mm 35mm 95mm},clip]{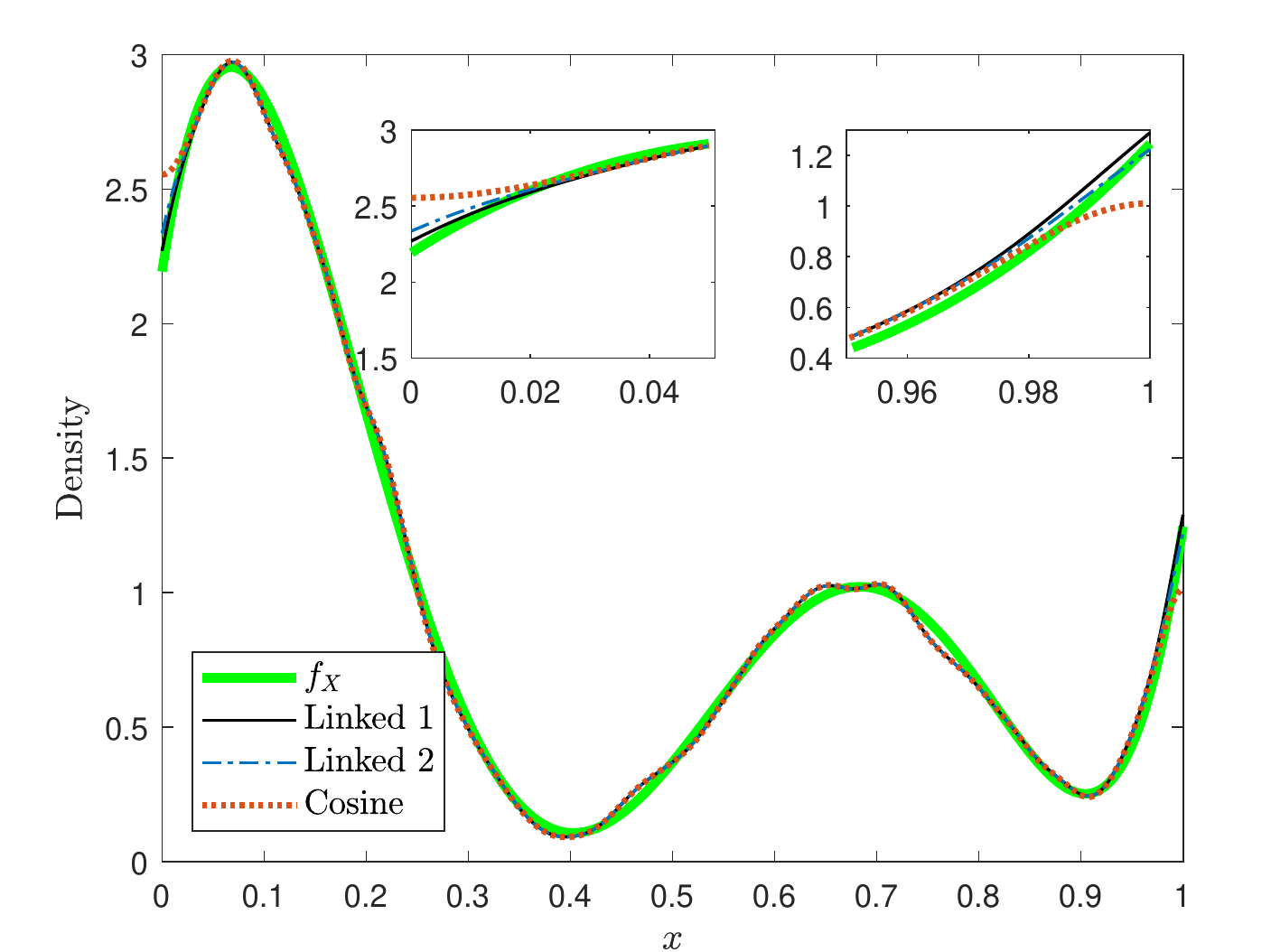}
\includegraphics[width=0.495\textwidth,trim={32mm 95mm 35mm 95mm},clip]{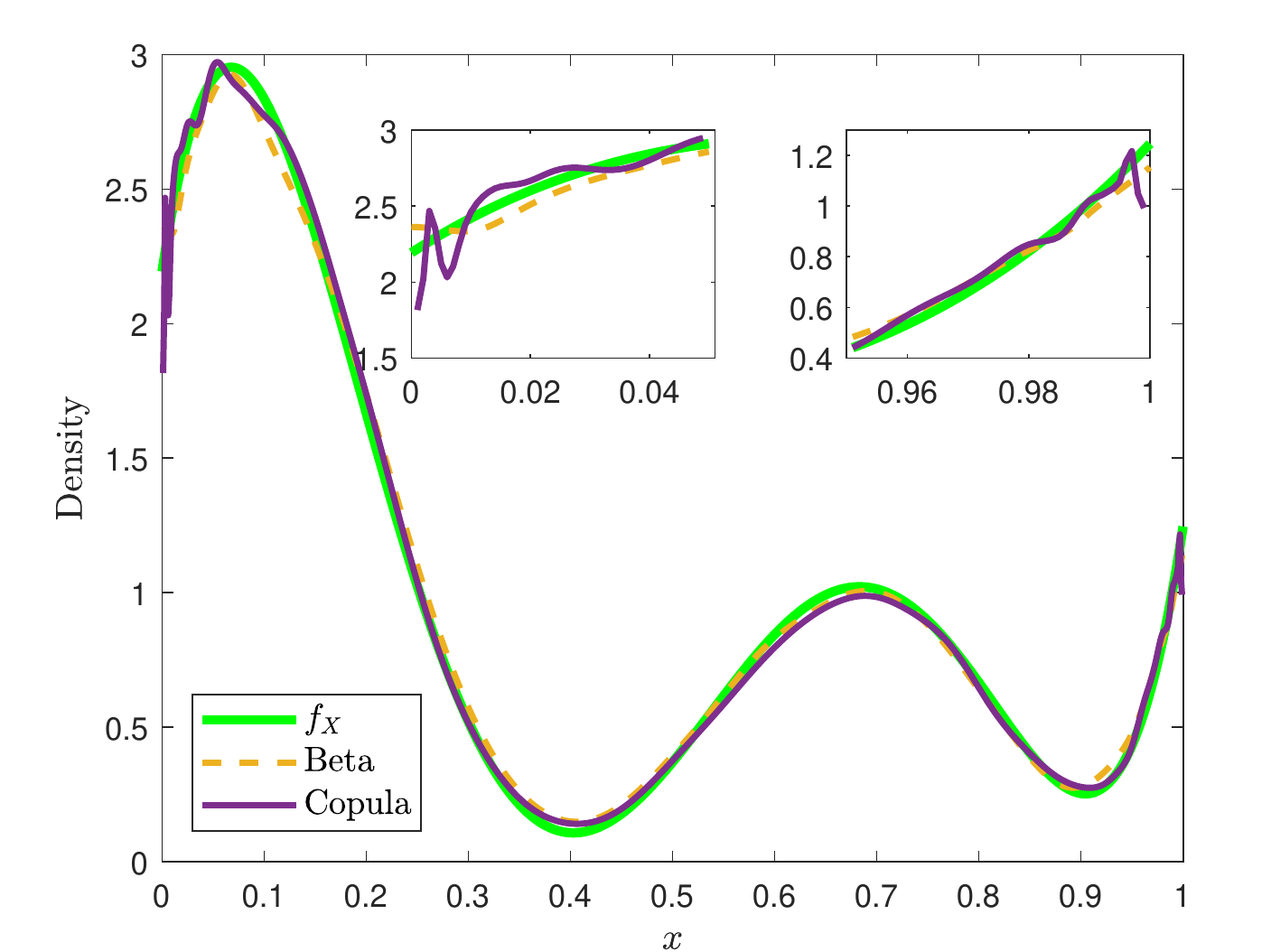}
\caption{Example of different methods for a sample size $n=10^4$. The proposed diffusion model (``Linked'') is much more accurate near the boundaries than the cosine model (``Cosine'') as highlighted by the magnified sections. The method ``Copula'' is found to be unstable near the boundaries.}
\label{syn1a}
\end{figure}

\begin{figure}
\centering
\includegraphics[width=0.495\textwidth,trim={32mm 93mm 35mm 95mm},clip]{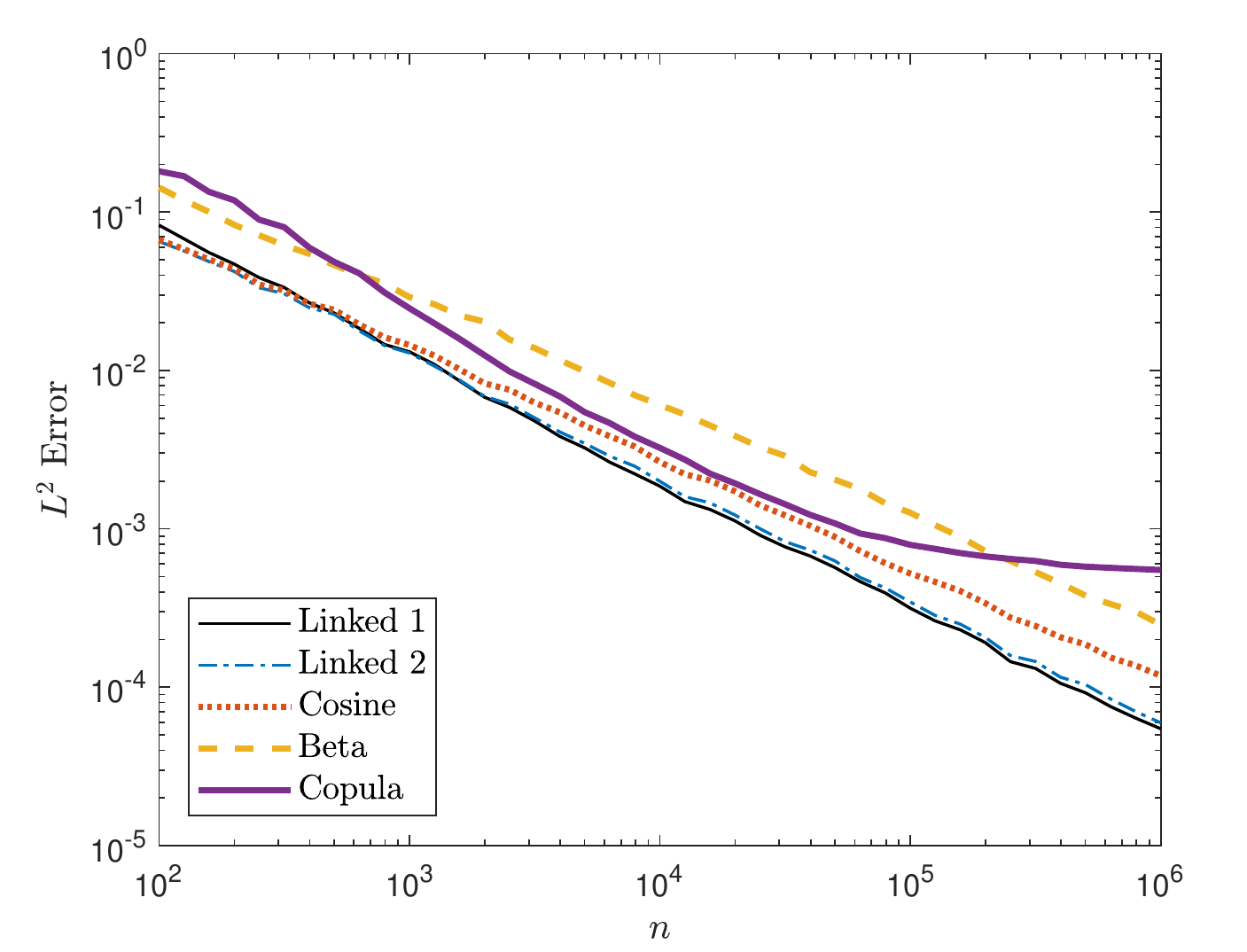}
\includegraphics[width=0.495\textwidth,trim={32mm 93mm 35mm 95mm},clip]{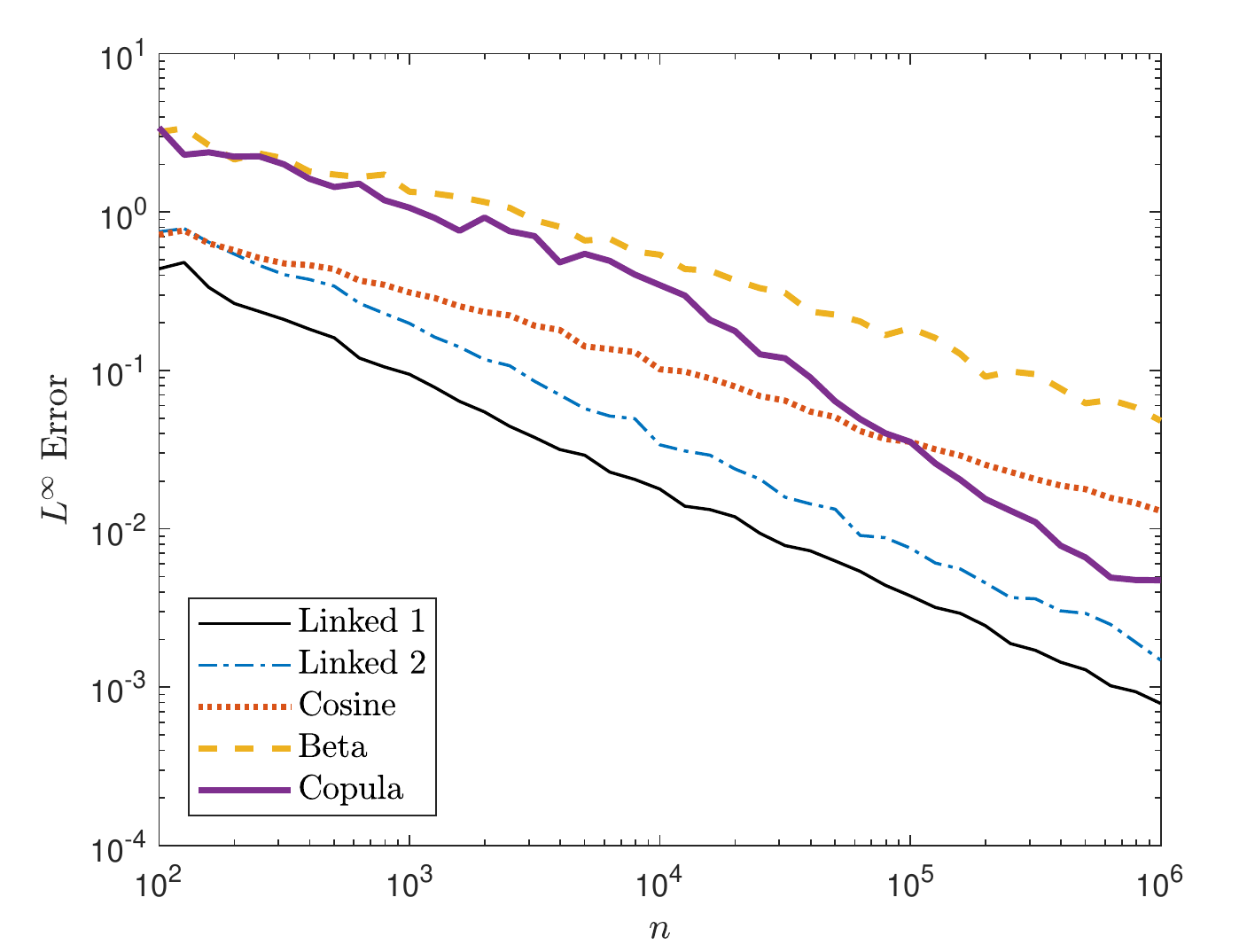}
\caption{Left: $L^2$ errors of methods averaged over $100$ samples for each $n$. Right: $L^\infty$ errors of methods averaged over $100$ samples for each $n$. The $L^2$ errors agree well with the minimum AMISE from Section \ref{asymp_properties}, whereas the increased accuracy gained near the boundary by using the linked boundary model is highlighted by the $L^\infty$ errors.}
\label{syn1b}
\end{figure}

\begin{figure}
\centering
\includegraphics[width=0.5\textwidth,trim={32mm 93mm 32mm 95mm},clip]{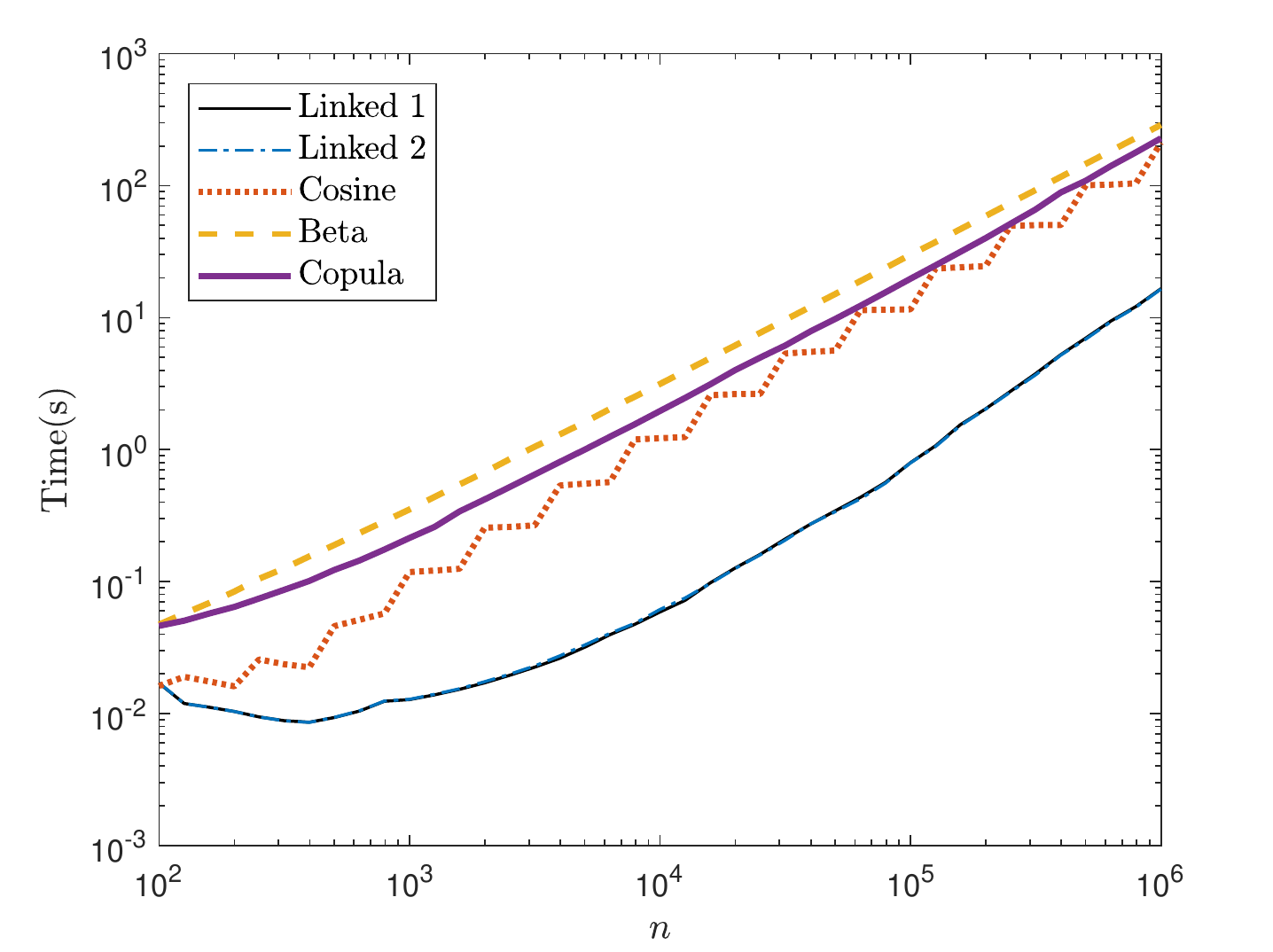}
\caption{CPU times for each method averaged over $100$ samples for each $n$. Experiments were performed on a basic four year old laptop. Each method appears to grow almost linearly (up to logarithmic factors), with the linked boundary estimator an order of magnitude faster than the other methods.}
\label{syn1c}
\end{figure}

\begin{table}
\centering
\begin{tabular}{c|llllllllll}
\toprule
\multicolumn{1}{c}{$a$}             & \multicolumn{1}{c}{$1.1$}                & \multicolumn{1}{c}{$1.2$} & \multicolumn{1}{c}{$1.3$} & \multicolumn{1}{c}{$1.4$} & \multicolumn{1}{c}{$1.5$}  \\
\midrule \midrule
\multicolumn{1}{l}{\textbf{Linked}} & \num{2.98E-3} & \num{1.33E-3} & \colorbox[rgb]{0.75,0.75,0.75}{\num{6.82E-4}} & \colorbox[rgb]{0.75,0.75,0.75}{\num{3.22E-4}} & \colorbox[rgb]{0.75,0.75,0.75}{\num{2.38E-4}}            \\
\midrule
\multicolumn{1}{l}{\textbf{LC}}  &  \colorbox[rgb]{0.75,0.75,0.75}{\num{1.05E-3}} & \num{1.14E-3} & \num{1.26E-3} & \num{1.38E-3} & \num{1.52E-3}  \\
\midrule
\multicolumn{1}{l}{\textbf{LCS}}                      & \num{1.23E-3} & \colorbox[rgb]{0.75,0.75,0.75}{\num{1.03E-3}} & \num{9.42E-4}  &  \num{1.04E-3}                            &     \num{1.19E-3}                                                                    \\
\bottomrule
\end{tabular}
~\\
~\\
\begin{tabular}{c|llllllllll}
\toprule
\multicolumn{1}{c}{$a$}              & \multicolumn{1}{c}{$1.6$} & \multicolumn{1}{c}{$1.7$} & \multicolumn{1}{c}{$1.8$} & \multicolumn{1}{c}{$1.9$} & \multicolumn{1}{c}{$2$} \\
\midrule \midrule
\multicolumn{1}{l}{\textbf{Linked}} & \colorbox[rgb]{0.75,0.75,0.75}{\num{1.58E-4}}          & \colorbox[rgb]{0.75,0.75,0.75}{\num{1.13E-4}}          & \colorbox[rgb]{0.75,0.75,0.75}{\num{8.01E-5}}         & \colorbox[rgb]{0.75,0.75,0.75}{\num{5.96E-5}}          & \colorbox[rgb]{0.75,0.75,0.75}{\num{5.05E-5}}           \\
\midrule
\multicolumn{1}{l}{\textbf{LC}}   &\num{1.65E-3}  & \num{1.80E-3} &   \num{1.94E-3} & \num{2.09E-3}  &   \num{2.27E-3} \\
\midrule
\multicolumn{1}{l}{\textbf{LCS}}                 &  \num{1.30 E-3}                              &       \num{1.40 E-3}                         &  \num{1.66 E-3}                              &      \num{1.74E-3}                          & \num{2.16E-3}                                                              \\
\bottomrule
\end{tabular}
\caption{Mean $L^2$ squared error over 10 simulations for different $a$.}
\label{tab1}
\end{table}

\begin{table}
\centering
\begin{tabular}{c|llllllllll}
\toprule
\multicolumn{1}{c}{$a$}             & \multicolumn{1}{c}{$1.1$}                & \multicolumn{1}{c}{$1.2$} & \multicolumn{1}{c}{$1.3$} & \multicolumn{1}{c}{$1.4$} & \multicolumn{1}{c}{$1.5$}  \\
\midrule \midrule
\multicolumn{1}{l}{\textbf{Linked}} & \colorbox[rgb]{0.75,0.75,0.75}{\num{7.32E-2}} & \colorbox[rgb]{0.75,0.75,0.75}{\num{4.19E-2}} & \colorbox[rgb]{0.75,0.75,0.75}{\num{2.52E-2}} & \colorbox[rgb]{0.75,0.75,0.75}{\num{1.31E-2}}  & \colorbox[rgb]{0.75,0.75,0.75}{\num{7.97E-3}}    \\
\midrule
\multicolumn{1}{l}{\textbf{LC}}                      &   \num{5.34E-1}                                            &     \num{6.40E-1}                           &\num{7.51E-1}                                &         \num{8.71E-1}                       &   \num{1.00E0}     \\
\midrule
\multicolumn{1}{l}{\textbf{LCS}}                      &   \num{1.84E-1}                                            &   \num{1.26E-1}                             &   \num{1.42E-1}                             &     \num{1.72E-1}                           &    \num{1.95E-1}       \\
\bottomrule
\end{tabular}
~\\
~\\
\begin{tabular}{c|llllllllll}
\toprule
\multicolumn{1}{c}{$a$}              & \multicolumn{1}{c}{$1.6$} & \multicolumn{1}{c}{$1.7$} & \multicolumn{1}{c}{$1.8$} & \multicolumn{1}{c}{$1.9$} & \multicolumn{1}{c}{$2$} \\
\midrule \midrule
\multicolumn{1}{l}{\textbf{Linked}}   & \colorbox[rgb]{0.75,0.75,0.75}{\num{4.42E-3}} & \colorbox[rgb]{0.75,0.75,0.75}{\num{2.85E-3}}          & \colorbox[rgb]{0.75,0.75,0.75}{\num{1.18E-3}}         & \colorbox[rgb]{0.75,0.75,0.75}{\num{4.78E-4}}          & \colorbox[rgb]{0.75,0.75,0.75}{\num{2.39E-4}}          \\
\midrule
\multicolumn{1}{l}{\textbf{LC}}                                            &            \num{1.14E0}                    &       \num{1.28E0}                         &         \num{1.44E0}                       &   \num{1.60E0}                             &             \num{1.78E0}                    \\
\midrule
\multicolumn{1}{l}{\textbf{LCS}}                                            &           \num{2.27E-1}                     &    \num{2.47E-1}                            &           \num{2.82E-1}                     &    \num{3.22E-1}                            &      \num{3.70E-1}                           \\
\bottomrule
\end{tabular}
\caption{Mean $L^\infty$ squared error over 10 simulations for different $a$.}
\label{tab2}
\end{table}

\begin{figure}[t]
\centering
\includegraphics[width=0.49\textwidth,trim={32mm 95mm 35mm 95mm},clip]{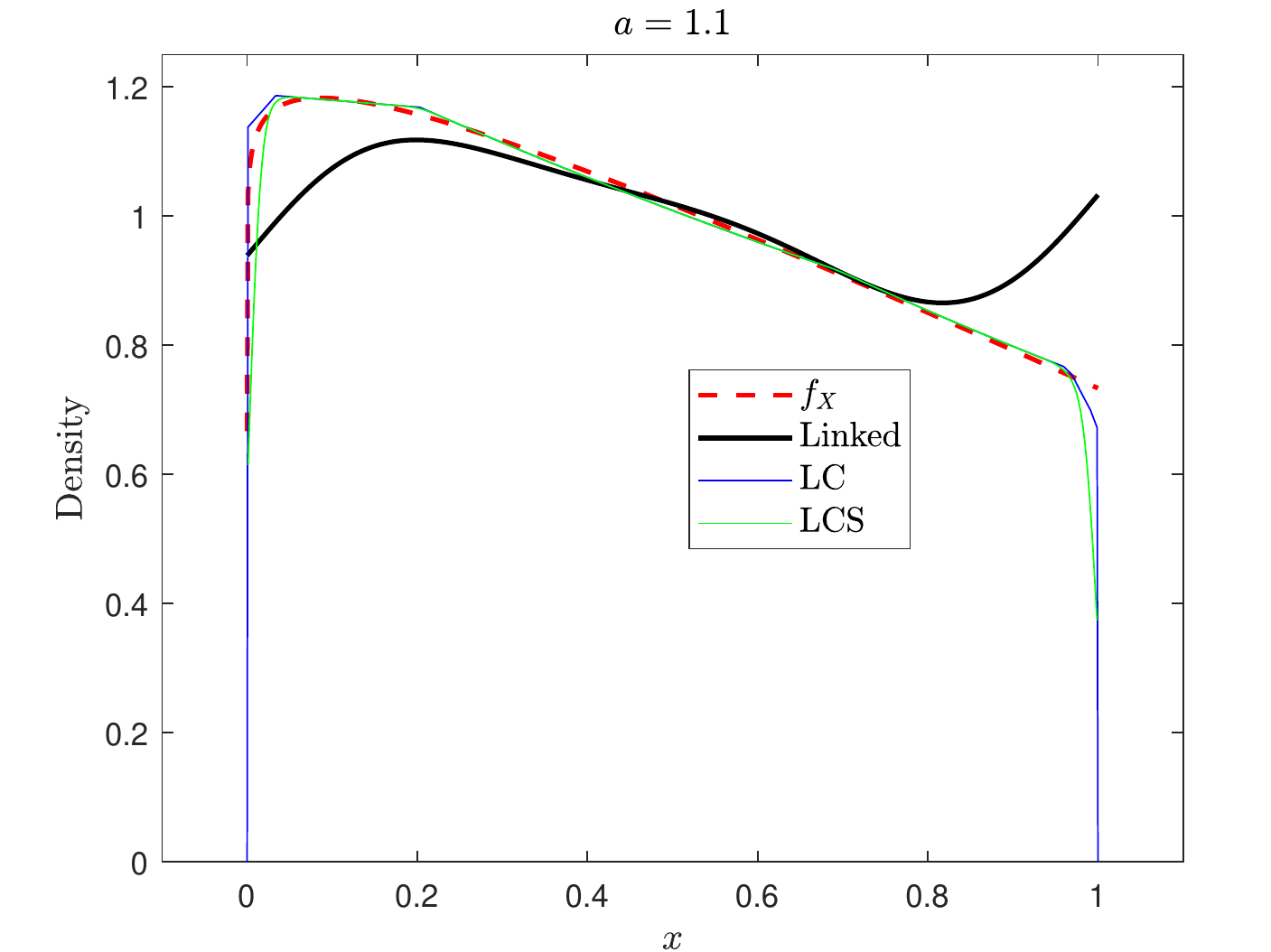}
\includegraphics[width=0.49\textwidth,trim={32mm 95mm 35mm 95mm},clip]{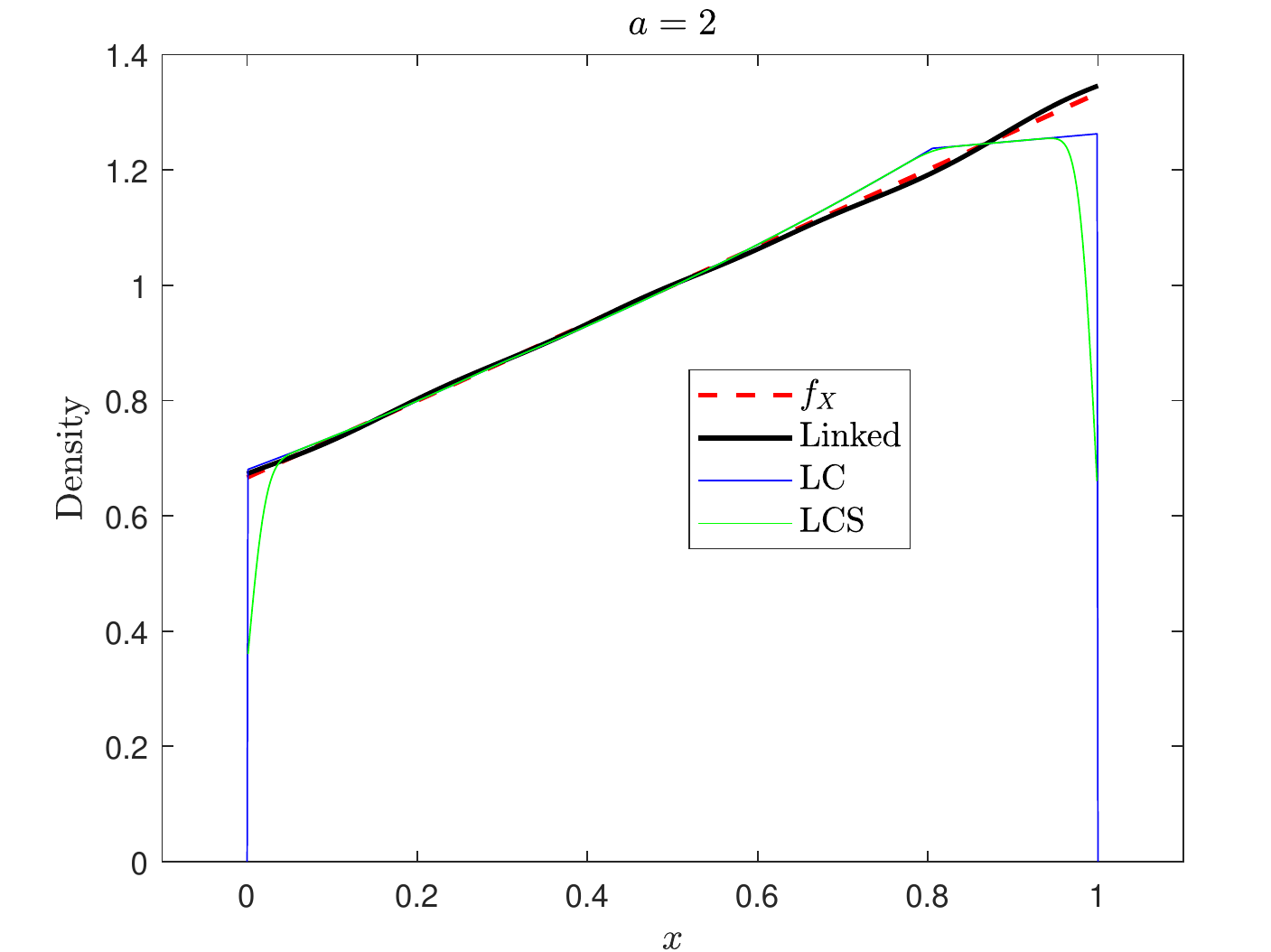}
\caption{Typical estimates for $n=10^4$ and $a=1.1$, $a=2$. We used the R package \texttt{logcondens} for the log-concave projection method.}
\label{syn2}
\end{figure}

Next, we consider the case when $f_X$ is log-concave and not necessarily smooth. Denoting the PDF of the beta distribution with parameters $(\alpha,\beta)$ by $b(\alpha,\beta;x)$, we let
$$
f_X(x)=\frac{b(1,2;x)+2b(a,1;x)}{3}.
$$
The parameter $a$ controls the smoothness of $f_X$ near $x=0$. We have compared our method to a method that computes log-concave maximum likelihood estimators \cite{dumbgen2007active,dumbgen2010logcondens}. This seeks to compute the log-concave projection of the empirical distribution through an active set approach. Code is freely available in \texttt{logcondens} \cite{logR} which can be found at \url{https://CRAN.R-project.org/package=logcondens}. Details on such methods can be found in \cite{samworth2017recent}, with a study of the more involved case of censored data in \cite{dumbgen2014maximum}. Tables \ref{tab1} and \ref{tab2} show the mean squared $L^2$ and $L^\infty$ errors respectively over $10$ simulations for $n=10^5$, as we vary $a$ for the linked boundary diffusion estimator and the log-concave projection method (abbreviated to LC), as well as its smoothed version (LCS). In each case, we have shaded the most accurate estimator. The linked boundary diffusion estimator performs much better when measured in the uniform norm but is slightly worse in the $L^2$ sense when the distribution function becomes less smooth. This is demonstrated in Figure \ref{syn2} for a typical estimation using $n=10^4$. To produce the tables, the linked boundary diffusion estimator took about 0.5s on average per simulation, the log-concave projection took about 5s, but its smoothed version was much slower, taking about 73s.

\subsection{Numerical example with cell data}
\label{num_exam}

\begin{figure}
    \centering
    \includegraphics[scale=1.1]{./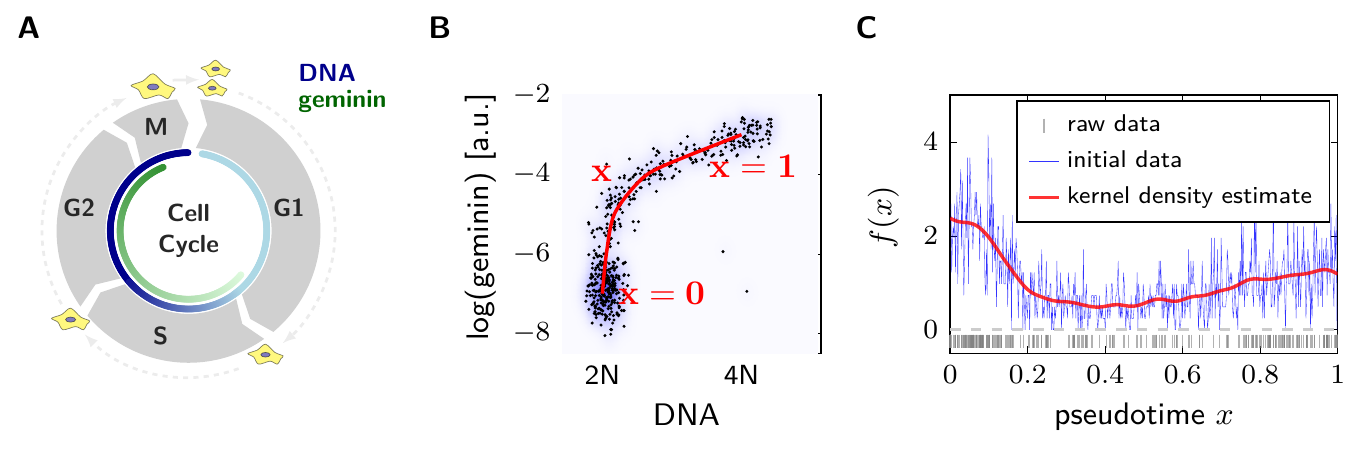}
    \caption{(A) Schematic cell cycle with geminin expression
        starting at the end of G1. (B) DNA and geminin signal from individual
        cells can be used to obtain a pseudo-temporal ordering of the
        population. An average cell follows the indicated path (red) through the
        dataset. (C) Pseudotime values (gray), binned data (blue) and kernel
        density estimate (red). The kernel density estimate was obtained by solving our continuous PDE \eqref{eq:pde:candidate} by our discrete numerical method with the `four corners matrix' in \eqref{eq:four:corners:matrix}. The stopping time, $t=0.00074$, came from the stopping time software of one of the authors: \url{https://au.mathworks.com/matlabcentral/fileexchange/14034-kernel-density-estimator}.}
    \label{fig:Example}
\end{figure}

This section demonstrates the application of the methods that we propose to a problem in biology with the data taken from \cite{Kuritz2017}. As mentioned in the introduction, Figure~\ref{fig:ksdensity:BAD} shows an example of what goes wrong when current methods are applied. Figure~\ref{fig:Example} C demonstrates our proposed method, which successfully incorporates the desired linked boundary condition.

This example originates from the study of biological processes, in particular, cell cycle studies in cancer research (Figure~\ref{fig:Example}~A). A recently developed theory \cite{Kafri2013,Kuritz2017} which relies on the distribution of cells along the cell cycle 
enables the study of entire cell cycle progression kinetics. The method utilizes data from single cell experiments like flow cytometry or single cell RNA sequencing, where the abundance of up to thousands of cellular components for every individual cell in a population is measured. Cells in a unsynchronized cell population are spread over all stages of the cell cycle, which can be seen in the exemplary dataset where levels of DNA and geminin in single cells were measured by flow cytometry (Figure~\ref{fig:Example}~B). The red curve in
Figure~\ref{fig:Example}~B indicates the path that the average cell takes when it goes through the cell cycle. Pseudotime algorithms perform a dimensionality reduction by assigning a pseudotime value to each cell, which can be interpreted as its position on the average curve. In this example, the pseudotime is a quantitative value of the progression through the cell cycle. However, it is in general not equal to real time. As the number of cells in a particular stage is related to the average transit time through that stage, one can derive a mapping from pseudotime to real time based on ergodic principles \cite{Kafri2013,Kuritz2017,Kuritz2020}. This mapping relies on the distribution of cells on the pseudotime scale. As mentioned in the introduction, the distribution at the beginning and the end of the cell cycle are linked due to cell division by
\begin{equation}
    f(0,t) = 2 \,f(1,t) \;.
    \label{eq:BC}
\end{equation}
Ignoring this fact when estimating the density on the pseudotime scale results in an erroneous transformation and thus inaccurate kinetics. The KDE with linked boundary condition ($r=2$) produces a distribution that satisfies the conditions \eqref{eq:BC} on the density due to cell division (Figure~\ref{fig:Example}~C). The MAPiT toolbox for single-cell data analysis \cite{Kuritz2020} applies our new KDE with linked boundary conditions to analyze cell cycle dependent molecular kinetics.

\section{Conclusion}
Our study was motivated by a dataset from a biological application. This biological application required a method of density estimation that can handle the situation of linked boundaries, which are crucial for gaining correct kinetics. More broadly, boundary bias issues are known to be a difficult problem in the context of kernel density estimation. To our knowledge, the linked boundary conditions that we handle here have not been previously addressed. We have proposed a new diffusion KDE that can successfully handle the linked boundary conditions. By using the unified transform, we obtained an explicit solution. In particular, we proved that this diffusion estimator is a bona fide probability density, which is also a consistent estimator at the linked boundaries, and derived its asymptotic integrated squared bias and variance (which shows an increase in the rate of convergence with sample size).

We also proposed two numerical methods to compute the estimator --- one is based on its series or integral representation and the other on the backward Euler method. We proved that the discrete/binned estimator converges to the continuous estimator. We found the new method competes well with other existing methods, including state-of-the-art methods designed to cope with boundary bias, both in terms of speed and accuracy. In particular, the new method is more accurate close to the boundary. Our new KDE with linked boundary conditions is now used in the MAPiT toolbox for single-cell data analysis \cite{Kuritz2020} to analyze cell cycle dependent molecular kinetics.

There remain some open questions regarding the proposed models. First, it is possible to adapt the methods in this paper to multivariate distributions. Second, it is possible to adapt these methods to other types of boundary conditions such as constraints on the moments of the distribution (and other non-local conditions). In this regard, we expect that the flexibility of the unified transform in PDE theory will be useful in designing smoothing kernel functions with the desired statistical properties. 

\paragraph{Acknowledgments \& Contributions:} 

MJC was supported by EPSRC grant EP/L016516/1. ZIB was supported by ARC grant DE140100993. KK was supported by DFG grant AL316/14-1 and by the Cluster of Excellence in Simulation Technology (EXC 310/2) at the University of Stuttgart. SM was supported by the ARC Centre of Excellence for Mathematical and Statistical Frontiers (ACEMS). MJC performed the theoretical PDE/statistical analysis of both the continuous and discrete models, and the numerical tests. SM developed and tested the binned version of the estimator. ZIB proposed the PDE model and assisted MJC and SM in writing the paper. KK provided the cell data and assisted in the writing of the numerical section. MJC is grateful to Richard Samworth, Tom Trogdon and David Smith for comments, and to Arieh Iserles for introducing him to the problem. The authors are grateful to the referees for comments that improved the manuscript.

\appendix

\section{Proofs of Results in Section \ref{cts_model_properties}}


\subsection{Formal derivation of solution formula}
\label{sol_deriv}
We begin with a formal description of how to obtain the solution formulae in Theorem \ref{rep_thm}. The most straightforward way to construct the solution is via the unified transform, and the following steps provide a formal solution which we must then rigorously prove is indeed a solution.

The first step is to write the PDE in divergence form:
$$
[\exp(-ikx+k^2t/2)f]_t-\frac{1}{2}[\exp(-ikx+k^2t/2)(f_x+ikf)]_x=0, \quad k\in\mathbb{C}.
$$
We will employ Green's theorem,
\begin{equation} \textstyle
\iint_{\Omega}\Big(\frac{\partial F}{\partial x}-\frac{\partial G}{\partial y}\Big) dxdy=\int_{\partial\Omega}\big(Gdx+Fdy\big),
\end{equation}
over the domain $(0,1)\times(0,t)$. Here one must assume apriori estimates on the smoothness of the solution $f$ which will be verified later using the candidate solution. Define the transforms:
\begin{align*}
&\textstyle\hat{f}_0(k):=\int_0^1 \exp(-ikx)f_0(x)dx,\quad &&\textstyle\hat{f}(k,t):=\int_0^1 \exp(-ikx)f(x,t)dx,\\
&\textstyle\tilde{g}(k,t):=\int_0^t\exp(k\tau)f(1,\tau)d\tau,\quad &&\textstyle\tilde{h}(k,t):=\int_0^t\exp(k\tau)f_x(1,\tau)d\tau,
\end{align*}
where again we assume these are well defined. Green's theorem and the boundary conditions imply (after some small amount of algebra) the so called `global relation', coupling the the transforms of the solution and initial data:
\begin{equation}
\label{GR}
\begin{split}
\hat{f}(k,t)\exp(k^2t/2)=&\textstyle\hat{f}_0(k)-\frac{1}{2}[\tilde{h}(k^2/2,t)+ikr\tilde{g}(k^2/2,t)]\\
&+\textstyle\frac{\exp(-ik)}{2}[\tilde{h}(k^2/2,t)+ik\tilde{g}(k^2/2,t)], \quad k\in\mathbb{C}.
\end{split}
\end{equation}
The next step is to invert via the inverse Fourier transform, yielding
\begin{equation}
\begin{split}
f(x,t)=\textstyle\frac{1}{2\pi}\int_{-\infty}^\infty &\exp(ikx-k^2t/2)\big\{\hat{f}_0(k)-\frac{1}{2}[\tilde{h}(k^2/2,t)+ikr\tilde{g}(k^2/2,t)]\\
&+\textstyle\frac{\exp(-ik)}{2}[\tilde{h}(k^2/2,t)+ik\tilde{g}(k^2/2,t)]\big\}dk.
\end{split}
\end{equation}
However, this expression contains the unknown functions $\tilde g$ and $\tilde h$. To get rid of these we use some complex analysis and symmetries of the global relation (\ref{GR}). Define the domains
\begin{equation}
D^+=\{k\in\mathbb{C}^+:\mathrm{Re}(k^2)<0\},\quad D^-=\{k\in\mathbb{C}^-:\mathrm{Re}(k^2)<0\},\quad D=D^+\cup D^{-}. 
\end{equation}
These are shown in Figure \ref{domains}.
A quick application of Cauchy's theorem and Jordan's lemma means we can re-write our solution as
\begin{equation}
\label{sol1}
\begin{split}
f(x,t)=&\textstyle\frac{1}{2\pi}\int_{-\infty}^\infty \exp(ikx-k^2t/2)\hat{f}_0(k)dk\\
&\textstyle-\frac{1}{2\pi}\int_{\partial D^+}\frac{\exp(ikx-k^2t/2)}{2}[\tilde{h}(k^2/2,t)+ikr\tilde{g}(k^2/2,t)]dk\\
&\textstyle-\frac{1}{2\pi}\int_{\partial D^-}\frac{\exp(ik(x-1)-k^2t/2)}{2}[\tilde{h}(k^2/2,t)+ik\tilde{g}(k^2/2,t)]dk.
\end{split}
\end{equation}
We now use the symmetry under $k\rightarrow-k$ of the global relation (\ref{GR}) and the fact that the argument in each of $\tilde{g}$ and $\tilde{h}$ is $k^2/2$ to set up the linear system:
\begin{equation*}
\begin{split}
\frac{1}{2}
\begin{pmatrix} 
[\exp(-ik)-1] & ik[\exp(-ik)-r] \\
[\exp(ik)-1] & -ik[\exp(ik)-r]
\end{pmatrix}\begin{pmatrix} 
\tilde{h}(\frac{ k^2}{2},t) \\
\tilde{g}(\frac{ k^2}{2},t) 
\end{pmatrix}=\begin{pmatrix} 
\hat{f}(k,t)\exp(\frac{ t k^2}{2})-\hat{f}_0(k)\\
\hat{f}(-k,t)\exp(\frac{t k^2}{2})-\hat{f}_0(-k)
\end{pmatrix}.
\end{split}
\end{equation*}
Defining the determinant function $\Upsilon(k)=2(1+r)(\cos(k)-1),$ solving the linear system leads to the relations:
\begin{align*}\textstyle
\frac{\tilde{h}(k^2,t)+ikr\tilde{g}(k^2,t)}{2}&=\textstyle\frac{1}{\Upsilon(k)}\Big\{\hat{f}_0(k)[(1+r)\exp(ik)-2r]\\
&\quad\quad\quad\quad+\textstyle\hat{f}_0(-k)(1-r)\exp(-ik) \\
&\quad\quad\quad\quad\quad-\textstyle\exp(k^2t/2)\hat{f}(k,t)[(1+r)\exp(ik)-2r]\\
&\quad\quad\quad\quad\quad\quad-\textstyle\exp(k^2t/2)\hat{f}(-k,t)(1-r)\exp(-ik)\Big\},\\
\textstyle\frac{\tilde{h}(k^2/2,t)+ik\tilde{g}(k^2/2,t)}{2}&=\frac{1}{\Upsilon(k)}\Big\{\hat{f}_0(k)[2\exp(ik)-(1+r)]+\hat{f}_0(-k)(1-r)\\
&\textstyle\quad\quad\quad\quad-\exp(k^2t/2)\hat{f}(k,t)[2\exp(ik1)-(1+r)]\\
&\textstyle\quad\quad\quad\quad\quad-\exp(k^2t/2)\hat{f}(-k,t)(1-r)\Big\}.
\end{align*}
Since $\Upsilon(k)$ is zero whenever $\cos(k)=1$, before we substitute these relations into our integral solution we deform the contours $\partial D^+$ and $\partial D^-$ as shown in Figure \ref{domains} to avoid the poles of $\Upsilon(k)^{-1}$ along the real line.

\begin{figure}
\centering
\includegraphics[height=45mm,trim={0mm 0mm 0mm 0mm},clip]{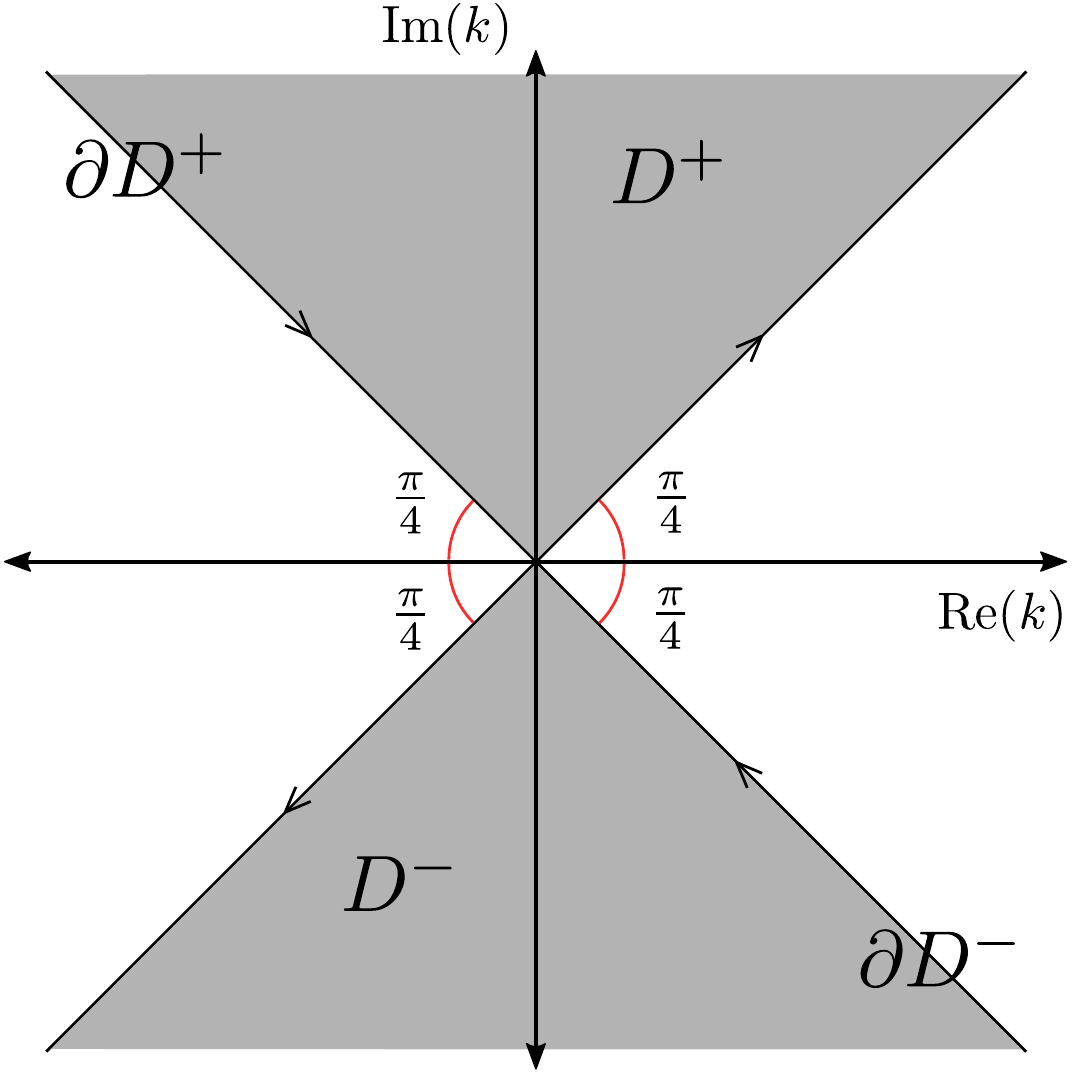}
\hspace{15mm}
\includegraphics[height=45mm,trim={0mm 0mm 0mm 0mm},clip]{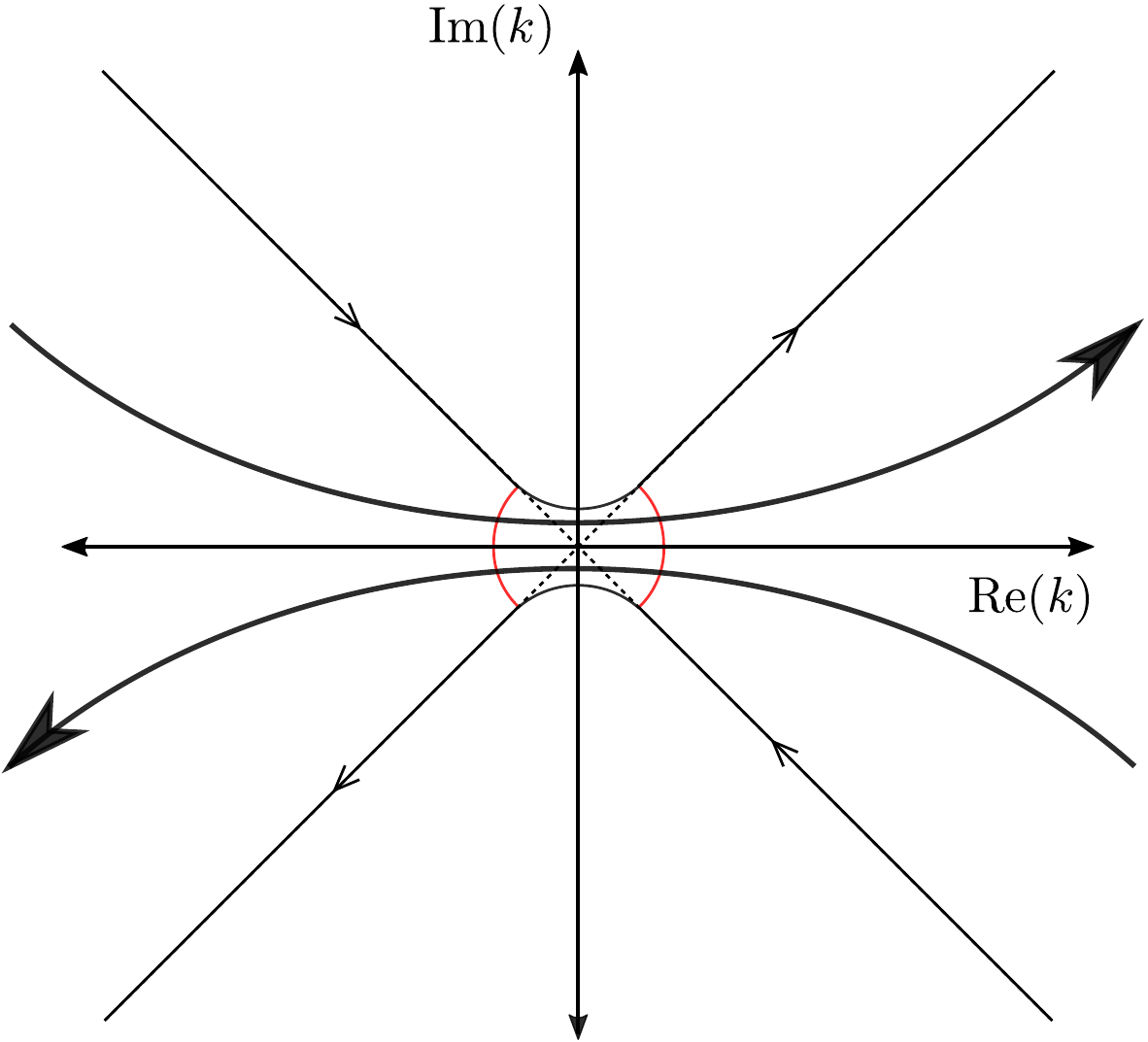}
\caption{Left: The domains $D^{\pm}$ as well as the orientation of the boundaries $\partial D^{\pm}$. Right: The deformed contours to avoid the singularity at $k=0$. The bold arrow shows a path on which both the $x$ and $t$ exponential parts of the integrand are exponentially decaying which can be used for efficient numerical evaluation.}
\label{domains}
\end{figure}

Upon substitution, we are still left with unknown contributions proportional to
\begin{align*}
&\textstyle I_1(x,t):=\int_{\partial D^+}\frac{\exp(ikx)}{\Upsilon(k)}\big\{\hat{f}(k,t)[(1+r)\exp(ik)-2r]\!+\hat{f}(-k,t)(1-r)\exp(-ik)\big\}dk\\
& \textstyle I_2(x,t):=\int_{\partial D^-}\frac{\exp(ik(x-1))}{\Upsilon(k)}\big\{\hat{f}(k,t)[2\exp(ik)-(1+r)]+\hat{f}(-k,t)(1-r)\big\}dk.
\end{align*}
We will argue that the integral $I_1(x,t)$ along $\partial D^+$ vanishes and the argument for $I_2(x,t)$ follows the same reasoning. First observe that as $k\rightarrow\infty$ in $\mathbb{C}^+$, $\Upsilon(k)^{-1}\sim \exp(ik)/(1+r)$. Also, we must have that
\begin{align*} \textstyle
\exp(ik)\hat{f}(k,t)=\int_0^1\exp(ik(1-x))f(x,t)dx
\end{align*}
is bounded in $\mathbb{C}^+$. $\hat{f}(-k,t)$ is also bounded in $\mathbb{C}^+$ and hence the function
$$ \textstyle
\frac{\hat{f}(k,t)[(1+r)\exp(ik)-2r]+\hat{f}(-k,t)(1-r)\exp(-ik)}{\Upsilon(k)}
$$
is bounded in $\mathbb{C}^+$. It follows that we can close the contour in the upper half plane and use Jordan's lemma to see that $I_1(x,t)$ vanishes. We then obtain the integral form of the solution in Theorem \ref{rep_thm}. 


To obtain the series form we can write
$
2\exp(ik)-(1+r)=-\exp(ik)\Upsilon(k)+\exp(ik)[(1+r)\exp(ik)-2r],
$
which implies
\begin{align*}
&\textstyle\int_{\partial D^-}\frac{\exp(ik(x-1)-k^2t/2)}{\Upsilon(k)}\hat{f}_0(k)[2\exp(ik)-(1+r)]dk=\\
&\textstyle\int_{\partial D^-}\exp(ikx-k^2t/2)\hat{f}_0(k)dk
-\!\!\int_{\partial D^-}\!\!\! \frac{\exp(ikx-k^2t/2)}{{\Upsilon(k)}}\hat{f}_0(k)[(1+r)\exp(ik)-2r]dk.
\end{align*}
Taking into account the orientation of $\partial D^{-}$, upon deforming the first of these integrals back to the real line, we see that it cancels the first integral in (\ref{thm_st}). 
Hence we have
\begin{equation}
\label{solution2}\textstyle
2\pi f(x,t)=-\int_{\partial D}\!\!\! \frac{e^{ikx-k^2t/2}}{{\Upsilon(k)}}\big\{\hat{f}_0(k)[(1+r)e^{ik}-2r]+\hat{f}_0(-k)(1-r)e^{-ik}\big\}dk.
\end{equation}
Define the function
\begin{align*}\textstyle
F(x,t;k):=\frac{e^{ikx-k^2t/2}\big\{\hat{f}_0(k)[(1+r)e^{ik}-2r]+\hat{f}_0(-k)(1-r) e^{-ik}\big\}}{2(1+r)}.
\end{align*}
The integrand in (\ref{solution2}) has a double pole at $k_n=2n\pi$ so we deform the contour $\partial D$ to $\partial \tilde{D}$ shown in Figure \ref{residue}. Cauchy's residue theorem then implies that
\begin{equation}
\label{series1}\textstyle
f(x,t)=-\frac{1}{2\pi}\int_{\partial D}\frac{F(x,t;k)}{\cos(k)-1} dk=\sum_{n\in\mathbb{Z}}-2iF'(k_n).
\end{equation}
It is then straightforward to check the equality of (\ref{series1}) and (\ref{series2}).

\begin{figure}
\centering
\includegraphics[height=30mm,trim={0mm 0mm 0mm 0mm},clip]{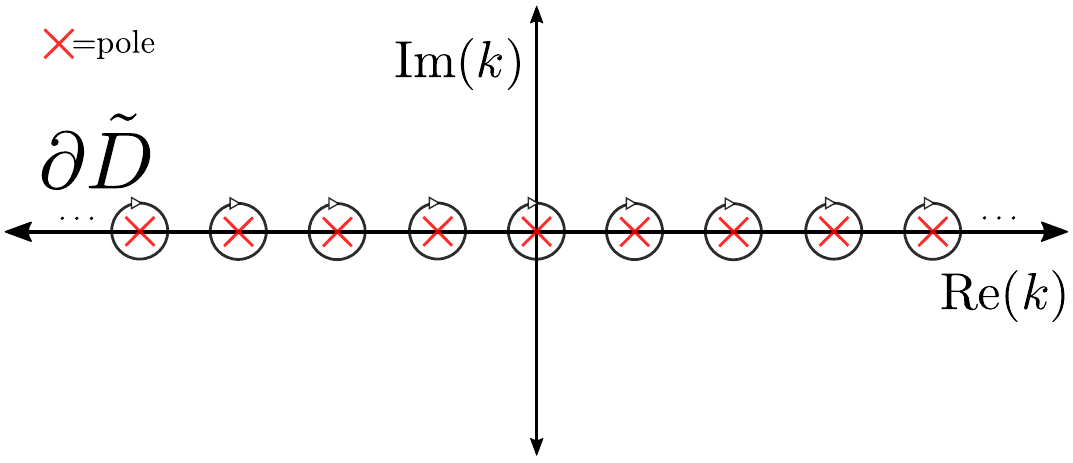}
\caption{Deformation of the contour to circle the poles. The contributions along the real line between these circles cancel.}
\label{residue}
\end{figure}

\subsection{Proof of Theorems \ref{wk_thm} and \ref{rep_thm}}
\label{well-posedness}

\begin{proof}[Proof of Theorems \ref{wk_thm} and \ref{rep_thm}]
For $t>0$, it is clear that the function $f$ given by (\ref{thm_st}) is smooth in $x,t$ and real analytic in $x$, as well as solving the heat equation. This follows from being able to differentiate under the integral sign due to the $\exp(-k^2t/2)$ factor and the fact that extending $x$ to a complex argument yields an analytic function. Note also that the argument in Section \ref{sol_deriv} does rigorously show equivalence between the series and integral forms of $f$. It is easy to check via the series (\ref{series2}) that the function $f$ satisfies the required boundary conditions and hence (\ref{weak_sol}) also holds by simple integration by parts. Regarding the convergence properties as $t\downarrow0$ when extra regularity of the initial condition is assumed, Proposition \ref{cty_bdy} deals with the case of continuous $f_0$, whilst Proposition \ref{cty_bdy2} deals with $f_0\in L^p([0,1])$ for $1\leq p<\infty$.

Hence there are two things left to prove; the fact that $\mu_t:=f(\cdot,t)dx$ lies in $C^w(0,T;M([0,1]))$ as well as uniqueness in $C^w(0,T;M([0,1]))$ (and $C(0,T;L^p([0,1]))$ for $1\leq p<\infty$).

To prove that $\mu_t\in C^w(0,T;M([0,1]))$, let $g\in C([0,1])$ and consider the integral kernel defined by (\ref{kernel0}). By Fubini's theorem we have
$$ \textstyle
\int_0^1 f(x,t)g(x)dx=\int_0^1\int_0^1 K(r;x,y,t)g(x)dxdf_0(y).
$$
By Proposition \ref{cty_bdy} the integral
$$ \textstyle
\int_0^1 K(r;x,y,t)g(x)dx
$$
converges for all $x$ and is uniformly bounded as $t\downarrow0$. We will use the explicit calculation of the endpoints limits at $x=0,1$. By the dominated convergence theorem, we have
\begin{align*} \textstyle
\lim_{t\downarrow 0}&\textstyle\int_0^1 f(x,t)g(x)dx=\big(\frac{r}{1+r}g(0)+\frac{1}{1+r}g(1)\big)f_0(\{0\})\\
&+\textstyle\big(\frac{r}{1+r}g(0)+\frac{1}{1+r}g(1)\big)f_0(\{1\})+\int_{x\in(0,1)}g(x)df_0(x)\\
&=\textstyle f_0(g)+\frac{g(1)-g(0)}{1+r}[f_0(\{0\})-rf_0(\{1\})]=f_0(g),
\end{align*}
which proves the required weak continuity.

To prove uniqueness, suppose that there exists $\mu_t,\tau_t\in M([0,1])$ which are both weak solutions with $\mu_0=\tau_0=f_0$. Set $m_t=\mu_t-\tau_t$. We will consider expansions of functions in the generalized eigenfunctions of the adjoint problem. It is straightforward to check that the adjoint problem (with the boundary conditions in (\ref{adjoint_bcs})) is Birkhoff regular and hence the generalized eigenfunctions are complete in $L^2([0,1])$. In fact we can show that any continuous function $g\in C([0,1])$ of bounded variation with $g(0)=g(1)$ can be approximated uniformly by linear combinations of these functions. This follows by either arguing as we did in the proof of Proposition \ref{cty_bdy} (the case of non-matching derivatives holds but is more involved) or follows from Theorem 7.4.4 of \cite{mennicken2003non}. 
Now suppose that $\lambda$ lies in the spectrum of the adjoint $\mathbb{A}^*$ defined by
\begin{equation} \textstyle
\mathbb{A}^*=-\frac{d^2}{dx^2},\quad \mathcal{D}(\mathbb{A}^*)=\{u\in H^2([0,1]):u_x(1)= ru_x(0),u(0)= u(1)\}.
\label{eq:adjoint:A:star}
\end{equation}
In our case, the generalized eigenfunctions associated with $\lambda$ correspond to a basis of $\mathcal{N}((\mathbb{A}-\lambda I)^l)$ where $l=1$ or $2$. 
If $l=2$, and the nullity of $(\mathbb{A}-\lambda I)^2$ is greater than $\mathbb{A}-\lambda I$, we can choose a basis $\{g_1,g_2\}$ such that $(\mathbb{A}-\lambda I)g_2=g_1$. 
For the general case and chains of generalized eigenfunctions, we refer the reader to \cite{mennicken2003non}. Now suppose that $g\in \mathcal{N}(\mathbb{A}-\lambda I)$, then $g$ must be smooth on $[0,1]$. It follows that for $t>0$
$$ \textstyle
2\frac{d}{dt}m_t(g)=-\lambda m_t(g).
$$
Note that $m_0(g)=0$ and hence we must have that $m_t(g)=0$ for all $t\geq 0$. Similarly, suppose that $\{g_1,g_2\}\subset\mathcal{N}((\mathbb{A}-\lambda I)^2)$ with $(\mathbb{A}-\lambda I)g_2=g_1$. Then by the above reasoning we have $m_t(g_1)=0$ for all $t\geq 0$ and hence
$$ \textstyle
2\frac{d}{dt}m_t(g_2)=-\lambda m_t(g_2)-m_t(g_1)=-\lambda m_t(g_2).
$$
Again we see that $m_t(g_2)=0$ for all $t\geq 0$. Though we don't have to consider it in our case, it is clear that the same argument would work for chains of longer lengths. The expansion theorem discussed above together with the dominated convergence theorem shows that if $g\in C([0,1])$ of bounded variation with $g(0)=g(1)=0$, then $m_t(g)=0$ for all $t\geq 0$. This implies that if $U\subset (0,1)$ is open then $m_t(U)=0$ for all $t\geq 0$. In particular, we must have
$$
m_t=a(t)\delta_0+b(t)\delta_1
$$
with $a,b$ continuous. In fact, for any $f\in\mathcal{F}(r)$ we have
$$ \textstyle
\frac{d}{dt}[a(t)+b(t)]f(1)=\frac{a(t)}{2}f_{xx}(0)+\frac{b(t)}{2}f_{xx}(1),
$$
from which we easily see that $a=b=0$ and hence uniqueness follows. This also shows uniqueness in the space $C(0,\mathbb{A};L^p([0,1]))$, where no argument at the endpoints is needed.
\end{proof}

\subsection{Proof of Theorem \ref{thm:boundary consistency}}
\label{consist_append}

The proof requires that we study the solution of the PDE as $t\downarrow0$. We break down the proof into a number of smaller results, which allows us to use them elsewhere. Recall the definition in (\ref{kernel0}). We shall also need the function
\begin{equation}
\label{f_def}\textstyle
K_1(x,t):=\sum_{n\in\mathbb{Z}}{\exp(ik_nx-k_n^2t/2)},
\end{equation}
defined for $t>0$. Using the Poisson summation formula, we can write $K_1$ as
$$\textstyle
K_1(x,t)=\frac{1}{\sqrt{2\pi t}}\sum_{n\in\mathbb{Z}}\exp\Big(-\frac{(x-n)^2}{2t}\Big),
$$
a periodic summation of the heat kernel. The following lemma is well-known and hence stated without proof.

\begin{lemma}
\label{heat_kernel_classic}
Let $w\in C([0,1])$ then
\begin{equation}
\label{classic2}\textstyle
\int_0^1 K_1(x-y,t) w(y)dy
\end{equation}
is bounded by $\|w\|_{\infty}$ and converges pointwise to $w(x)$ for any $x\in (0,1)$ and to $(w(0)+w(1))/2$ for $x=0,1$ as $t\downarrow0$. If $w(0)=w(1)$ then (\ref{classic2}) converges to $w(x)$ uniformly over the interval $[0,1]$.
\end{lemma}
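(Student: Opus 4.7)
The plan is to exploit the Poisson summation form
\[
K_1(x,t)=\frac{1}{\sqrt{2\pi t}}\sum_{n\in\mathbb{Z}}\exp\!\bigl(-(x-n)^2/(2t)\bigr),
\]
which expresses $K_1(x-y,t)$ as a periodic heat kernel. First I would establish the boundedness claim by showing $\int_0^1 K_1(x-y,t)\,dy=1$ for every $x\in[0,1]$. Since all summands are positive, Fubini applies and after the substitution $u=x-y-n$ each summand contributes an integral of a Gaussian over the interval $[x-n-1,x-n]$; as $n$ ranges over $\mathbb Z$ these intervals tile $\mathbb R$ disjointly, producing $\frac{1}{\sqrt{2\pi t}}\int_{\mathbb R}e^{-u^2/(2t)}du=1$. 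The boundedness by $\|w\|_\infty$ follows immediately.

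For the uniform statement under $w(0)=w(1)$, I would extend $w$ by $1$-periodicity to a continuous function $\tilde w$ on $\mathbb R$. The same Fubini/change-of-variables argument used above gives
\[
\int_0^1 K_1(x-y,t)\,w(y)\,dy=\frac{1}{\sqrt{2\pi t}}\int_{\mathbb R}\exp\!\bigl(-(x-y)^2/(2t)\bigr)\tilde w(y)\,dy,
\]
which is a standard Gaussian mollification of a bounded, uniformly continuous function on $\mathbb R$. Uniform convergence to $\tilde w(x)=w(x)$ on $[0,1]$ then follows from the classical mollifier argument (split the real line into $|x-y|<\delta$ and $|x-y|\geq\delta$; control the first by the modulus of continuity of $\tilde w$ and the second by Gaussian tail bounds).

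For pointwise convergence without the matching endpoint hypothesis, I would isolate the Gaussian summands relevant near the query point. For a fixed $x\in(0,1)$ only the $n=0$ term in the Poisson sum has its center inside $[0,1]$; for any fixed $\delta<\min(x,1-x)$ the $n\neq 0$ contributions are $O(t^{-1/2}e^{-c/t})$, hence vanish as $t\downarrow 0$, while the $n=0$ contribution is a standard Gaussian mollification of $w$ around the interior point $x$, converging to $w(x)$ by continuity of $w$. For $x=0$ the $n=0$ Gaussian is centered at the left endpoint so, restricted to $y\in[0,1]$, its mass is asymptotically $\tfrac12$; the $n=-1$ Gaussian is centered at the right endpoint and contributes the other half; all other $n$ are exponentially suppressed. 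A standard ``half-mollifier'' argument (write $w(y)=w(0)+o(1)$ for $y$ near $0$ and $w(y)=w(1)+o(1)$ for $y$ near $1$, with the errors controlled by the modulus of continuity of $w$) yields the limit $\tfrac12(w(0)+w(1))$. The case $x=1$ is symmetric via $n=0$ and $n=1$.

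The only nontrivial bookkeeping is the endpoint case, where one must simultaneously handle the two half-Gaussians at $y=0$ and $y=1$ and quantitatively argue that their contributions to $\int_0^1 K_1(x-y,t)\,dy$ each tend to $\tfrac12$—this is where the symmetry between the $n=0$ and $n=-1$ (or $n=1$) terms must be used carefully. Everything else is Gaussian tail estimates and the uniform continuity of $w$ on the compact interval $[0,1]$.
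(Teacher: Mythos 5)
Your proof is correct. The paper does not in fact supply an argument here---the lemma is explicitly stated as well-known and left unproved---and your route via the Poisson-summation (periodized heat kernel) form of $K_1$, the exact normalization $\int_0^1 K_1(x-y,t)\,dy=1$ obtained by tiling $\mathbb{R}$ with the shifted intervals, standard Gaussian mollification for the interior points and for the periodic extension when $w(0)=w(1)$, and the half-Gaussian bookkeeping with the $n=0$ and $n=\mp1$ terms at the endpoints is exactly the standard argument the paper implicitly appeals to.
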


We will also need the following.

\begin{lemma}
\label{no_t}
Let $f_0\in L^1([0,1])$, then
$$ \textstyle
t\sum_{n\in\mathbb{N}}\left|\exp(-k_n^2t/2)k_n\hat{f}_0(k_n)\right|\rightarrow 0\quad\text{ as }\quad t\downarrow0.
$$
\end{lemma}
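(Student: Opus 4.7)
The plan is to combine two properties of the Fourier coefficients $\hat{f}_0(k_n)$. First, the trivial uniform bound $|\hat{f}_0(k_n)|\leq \|f_0\|_{L^1([0,1])}$. Second, the Riemann--Lebesgue lemma, which applies because $f_0\in L^1([0,1])$ and gives $\hat{f}_0(k_n)\to 0$ as $n\to\infty$. Given $\epsilon>0$, choose $N\in\mathbb{N}$ with $|\hat{f}_0(k_n)|<\epsilon$ for all $n>N$, and split the sum to obtain
\begin{equation*}
t\sum_{n\in\mathbb{N}}\left|e^{-k_n^2t/2}k_n\hat{f}_0(k_n)\right|\;\leq\; t\|f_0\|_{L^1}\!\sum_{n=1}^{N} k_n\;+\;\epsilon\,\cdot\, t\sum_{n=N+1}^{\infty} k_n\,e^{-k_n^2t/2}.
\end{equation*}
The first term is $t$ times a constant depending only on $N$, hence vanishes as $t\downarrow 0$. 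The entire argument will then be complete once we bound the second term uniformly in $t$.

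The crux is therefore to show that $S(t):=t\sum_{n=1}^{\infty}k_n\,e^{-k_n^2t/2}$ is uniformly bounded on $(0,1]$. Setting $\phi(x)=2\pi x\,e^{-2\pi^2 x^2 t}$, we have $\phi'(x)=2\pi\,e^{-2\pi^2 x^2 t}(1-4\pi^2 x^2 t)$, so $\phi$ is increasing on $[0,(2\pi\sqrt{t})^{-1}]$ and decreasing afterwards; the global maximum is of order $t^{-1/2}$. Splitting the series at $M=\lceil(2\pi\sqrt{t})^{-1}\rceil$ and applying the standard sum-integral comparison on each monotone piece, together with the exact Gaussian evaluation
\begin{equation*}
\int_0^{\infty}2\pi x\, e^{-2\pi^2 x^2 t}\,dx=\frac{1}{2\pi t},
\end{equation*}
yields an absolute constant $C$ (independent of $t$) with $S(t)\leq C$. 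This is the only step that requires any care, as the non-monotonicity of $\phi$ rules out a single one-step integral comparison; the split at the peak handles it cleanly.

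Putting the two bounds together gives $\limsup_{t\downarrow 0}\,t\sum_{n\in\mathbb{N}}|e^{-k_n^2t/2}k_n\hat{f}_0(k_n)|\leq C\epsilon$, and since $\epsilon>0$ is arbitrary the $\limsup$ is zero, proving the lemma. I expect no serious obstacle beyond the tail bound on $S(t)$; the splitting argument via Riemann--Lebesgue is standard, and the factor of $t$ in front of the finite head sum of size $N$ is what makes that piece disappear.
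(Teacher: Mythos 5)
Your proof is correct and takes essentially the same route as the paper: split the sum using the Riemann--Lebesgue lemma, kill the finite head with the prefactor $t$, and bound the tail by sum--integral comparison with the Gaussian integral $\int_0^\infty y\,e^{-y^2t/2}\,dy=\mathcal{O}(1/t)$, so the $\limsup$ is at most a constant times $\epsilon$. The only cosmetic difference is how the non-monotonicity of the summand is handled (you split at the peak; the paper shifts the integrand by one mesh width), which does not change the argument.
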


\begin{proof}
By the Riemann--Lebesgue lemma, we have that $\lim_{n\rightarrow\infty}\hat{f}_0(k_n)=0$. So given $\epsilon>0$, let $N$ be large such that if $n\geq N$ then $\left|\hat{f}_0(k_n)\right|\leq \epsilon$. Then
$$ \textstyle
t\sum_{n>N}\left|\exp(-k_n^2t/2)k_n\hat{f}_0(k_n)\right|\leq  \frac{t\epsilon}{2\pi}\sum_{n>N}\exp(-2n^2\pi^2t)4n\pi^2.
$$
Let $h=2\pi$. 
The sum is an approximation of the integral $\int_{h(N+1)}^{\infty}\exp(-y^2t/2)ydy$ and we have
$$
t\sum_{n>N}\left|\exp(-k_n^2t/2)k_n\hat{f}_0(k_n)\right|\leq  \frac{\epsilon}{2\pi}\int_0^\infty \exp(-y^2t/2){t(y+2h)}dy< \tilde C\epsilon,
$$
for some constant $\tilde C$. It follows that
$$
\limsup_{t\downarrow0}t\sum_{n\in\mathbb{N}}\left|\exp(-k_n^2t/2)k_n\hat{f}_0(k_n)\right|\leq \tilde C\epsilon.
$$
Since $\epsilon>0$ was arbitrary, the lemma follows.
\end{proof}

The following Proposition then describes the limit properties of our constructed solution as $t\downarrow 0$ in the case of continuous initial data.

\begin{proposition}
\label{cty_bdy}
Let $f_0\in C([0,1])$ and $K$ be given by (\ref{kernel0}). For $t\in(0,1]$ define
$$ \textstyle
f(x,t):=\int_0^1K(r;x,y,t)f_0(y)dy,\quad q(x,t):=\int_0^1K(r;y,x,t)f_0(y)dy,
$$
(note the interchange of $x,y$ as arguments of $K$ for the definition of $q$). Then there exists a constant $C$ (dependent on $r$) such that
\begin{equation}
\label{bdd_sol}
\sup_{x\in[0,1],t\in(0,1]} \max\{\left|f(x,t)\right|,\left|q(x,t)\right|\}\leq C\|f_0\|_{\infty}.
\end{equation}
Furthermore,
\begin{align*}
\lim_{t\downarrow0}f(x,t)&=\begin{cases}
f_0(x),\quad x\in(0,1)\\
\frac{r}{1+r}[f_0(0)+f_0(1)],\quad x=0\\
\frac{1}{1+r}[f_0(0)+f_0(1)],\quad x=1
\end{cases}\\
\lim_{t\downarrow0}q(x,t)&=\begin{cases}
f_0(x),\quad x\in(0,1)\\
\frac{1}{1+r}[rf_0(0)+f_0(1)],\quad x=0,1
\end{cases}.
\end{align*}
Finally, in the case that $f_0(0)=r f_0(1)$, $f(x,t)$ converges to $f_0(x)$ uniformly over $x\in[0,1]$ as $t\downarrow0$.
\end{proposition}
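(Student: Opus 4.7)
The plan is to rewrite \eqref{kernel0} in the form
\begin{equation*}
K(r;x,y,t) = K_1(x-y,t)\bigl[1 + \tfrac{1-r}{1+r}(x-y)\bigr] + \tfrac{1-r}{1+r}(x+y-1)K_1(x+y,t) + \tfrac{1-r}{1+r}\, t\bigl[\partial_y K_1(x+y,t) - \partial_y K_1(x-y,t)\bigr],
\end{equation*}
which follows by regrouping the $(x+ik_n t)$ and $(x+ik_n t-1)$ factors and using the identity $\sum_n ik_n e^{ik_n(x\mp y)-k_n^2 t/2} = \mp\partial_y K_1(x\mp y,t)$. From the Poisson-summation form $K_1(z,t) = (2\pi t)^{-1/2}\sum_n \exp(-(z-n)^2/(2t))$ and the fact that the intervals $\{[x-n-1,x-n]\}_{n\in\mathbb{Z}}$ partition $\mathbb{R}$, I would derive the uniform estimate $\int_0^1 |t\,\partial_y K_1(x\pm y,t)|\,dy \leq \int_{\mathbb{R}} t\,|G_t'(z)|\,dz = \sqrt{2t/\pi}$, where $G_t$ is the Gaussian heat kernel on $\mathbb{R}$. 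Combined with $\int_0^1 K_1(x\pm y,t)\,dy = 1$ and the $r$-dependent bound on the algebraic prefactors, this immediately yields \eqref{bdd_sol} for $f$; the analogous decomposition of $K(r;y,x,t)$ supplies the identical bound for $q$.

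\textbf{Pointwise limits.} Since the $t\partial_y K_1$ pieces vanish uniformly as $t\downarrow 0$ by the $\sqrt{2t/\pi}$ estimate, the limits come from the $K_1$ pieces alone. For $f$ I would apply Lemma~\ref{heat_kernel_classic} to the continuous test functions $\phi_1(y) = [1 + \tfrac{1-r}{1+r}(x-y)]f_0(y)$ and, after the substitution $u = 1-y$ together with the $1$-periodicity and evenness of $K_1$, $\phi_2(u) = \tfrac{1-r}{1+r}(x-u)f_0(1-u)$. For $x \in (0,1)$ both $\phi_1(x) = f_0(x)$ (the bracket is $1$ at $y=x$) and $\phi_2(x) = 0$, giving $f(x,t) \to f_0(x)$. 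At $x \in \{0,1\}$ Lemma~\ref{heat_kernel_classic} returns $[\phi_i(0)+\phi_i(1)]/2$, and evaluating these half-sums with the prefactors $1\pm\tfrac{1-r}{1+r}$ produces $\tfrac{r}{1+r}[f_0(0)+f_0(1)]$ at $x=0$ and $\tfrac{1}{1+r}[f_0(0)+f_0(1)]$ at $x=1$. For $q$, the same method applied to $K(r;y,x,t)$ gives $q(x,t) \to f_0(x)$ in the interior; at $x \in \{0,1\}$ both $K_1(y\pm x,t)$ reduce to $K_1(y,t)$ (using $1$-periodicity at $x=1$), so the two $K_1$ pieces merge into one half-sum that evaluates to the common boundary value $\tfrac{rf_0(0)+f_0(1)}{1+r}$.

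\textbf{Uniform convergence under $f_0(0)=rf_0(1)$.} Under the linked boundary condition the endpoint limits above coincide with $f_0(0)$ and $f_0(1)$, so pointwise convergence already holds on all of $[0,1]$. To upgrade to uniform convergence my plan is a density/semigroup argument in the closed subspace $X := \{g \in C([0,1]) : g(0) = rg(1)\}$. The map $P_t : f_0 \mapsto f(\cdot,t)$ preserves $X$ and is uniformly bounded on it by the first paragraph. I would then show that $D := \{g \in C^\infty([0,1]) : g(0) = rg(1),\, g'(0) = g'(1)\}$ is sup-dense in $X$: given $f_0 \in X$, extend it to $\mathbb{R}$ via the linked-BC-periodic rule $\tilde f_0(x) = r^{-\lfloor x\rfloor}f_0(\{x\})$ (continuous on $\mathbb{R}$ thanks to the linked BC), mollify to obtain a smooth $g^\epsilon$ that automatically inherits the linked BC (mollification alone gives $g^\epsilon_x(0) = rg^\epsilon_x(1)$, not what is needed), and finally subtract a correction supported in a window of width $\sim\epsilon^2$ near one endpoint to enforce $g^\epsilon_x(0) = g^\epsilon_x(1)$ while keeping the correction's sup norm $O(\epsilon)$. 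For any $g \in D$, integration by parts using both boundary conditions shows that the Fourier-type coefficients in \eqref{series2} decay as $O(k_n^{-2})$ --- in particular the combination $\hat s_0(k_n) - (1-r)\hat s_1(k_n) = \int_0^1 \sin(k_n x)[1 + (r-1)x]g(x)\,dx$ is $O(k_n^{-2})$ because the weighted integrand $\tilde g(x)=[1+(r-1)x]g(x)$ satisfies $\tilde g(0) = \tilde g(1)$ via the linked BC --- so the series converges absolutely and uniformly in $(x,t) \in [0,1]\times[0,\infty)$, yielding $f^g(\cdot,t) \to g$ uniformly. A standard $3\epsilon$ argument then extends uniform convergence to all of $X$. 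The most delicate step will be the density construction, specifically producing an $O(\epsilon)$-small correction that realises the required $O(1/\epsilon)$ derivative jump at one endpoint.
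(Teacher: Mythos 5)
Your handling of the uniform bound and the pointwise limits is correct and essentially the paper's own route: the paper likewise regroups \eqref{kernel0} into the decomposition \eqref{kernel1}, discards the $t$-weighted derivative terms (it does so via Lemma~\ref{no_t}, a Riemann--Lebesgue argument; your $L^1$ estimate $t\int_{\mathbb{R}}|G_t'(z)|\,dz=\sqrt{2t/\pi}$ is a clean alternative that in addition is uniform in $x$ and gives a rate), and then applies Lemma~\ref{heat_kernel_classic} after the same substitution $y\mapsto 1-y$ for the $K_1(x+y,t)$ piece; your endpoint bookkeeping for both $f$ and $q$ reproduces the stated limits.

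Where you genuinely differ is the last claim (uniform convergence when $f_0(0)=rf_0(1)$). The paper splits $f_0(x)=f_0(0)+x(1-r)f_0(1)+p_0(x)$, so that $p_0(0)=p_0(1)=0$ and Lemma~\ref{heat_kernel_classic} gives uniform convergence for $p_0$, and then verifies by a short residue computation that the affine part, which satisfies the boundary conditions and is stationary, is reproduced exactly by the kernel (equation \eqref{want_this}). Your semigroup/density ($3\epsilon$) argument on the space $\{g:g(0)=rg(1)\}$ is a viable, more standard ``core of the semigroup'' alternative, and your integration-by-parts checks are right: $g'(0)=g'(1)$ makes $\hat c_0(k_n)=O(k_n^{-2})$ and $\tilde g(0)=\tilde g(1)$ makes $\hat s_0(k_n)-(1-r)\hat s_1(k_n)=O(k_n^{-2})$, while the explicit $k_nt\hat c_0(k_n)$ term is $O(\sqrt{t}\,k_n^{-2})$. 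But it carries two obligations the paper's route avoids. First, uniform convergence of \eqref{series2} on $[0,1]\times[0,\infty)$ only shows that $f^g(\cdot,t)$ converges uniformly to the sum of the series at $t=0$; identifying that sum with $g$ is precisely the completeness/expansion statement and does not come for free. You can close this within your own proof: the uniform limit is continuous and coincides with $g$ on $(0,1)$ by your pointwise step, hence equals $g$ on $[0,1]$ (alternatively, invoke the Birkhoff-regular expansion theorem the paper cites in its uniqueness proof). Second, the density step is the delicate part, as you note: the extension $\tilde f_0(x)=r^{-\lfloor x\rfloor}f_0(\{x\})$ breaks down at $r=0$ (which the paper allows) and the endpoint derivative correction, though feasible exactly as you describe ($O(1/\epsilon)$ jump realised on a window of width $\epsilon^2$ with $O(\epsilon)$ sup norm), is far more involved than the paper's one-line splitting. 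With those two patches your argument is complete; what it buys is robustness (no residue identity, generalizes readily to other regular boundary conditions), at the price of length.
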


\begin{proof}
We can write

\begin{equation}
\label{kernel1}
\begin{split}\textstyle
K(r;x,y,t)&= \textstyle K_1(x-y,t)\big[1+(x-y)\frac{1-r}{1+r}\big]+K_1(x+y,t)(x+y-1)\frac{1-r}{1+r}\\
&\quad\quad\quad\quad+\frac{t(1-r)}{1+r}\big[K_1'(x+y,t)+K_1'(x-y,t)\big].
\end{split}
\end{equation}
Here $'$ means the derivative with respect to the spatial variable.

To study the limit as $t\downarrow0$, we note that we can ignore the terms with a factor of $t$ using Lemma~\ref{no_t}. By a change of variable we have
$$ \textstyle
\int_0^1 K_1(x+y,t)(x+y-1)f_0(y)dy=\int_0^1 K_1(x-y,t)(x-y)f_0(1-y)dy.
$$
The bound (\ref{bdd_sol}) now follows from Lemma \ref{heat_kernel_classic}, as do the pointwise limits from a straightforward somewhat tedious calculation. 

Now suppose that $f_0(0)=r f_0(1)$ and split the initial data as follows:
\begin{equation}
\label{split1}
f_0(x)=f_0(0)+x(1-r)f_0(1)+p_0(x).
\end{equation}
Then $p_0\in C([0,1])$ with the crucial property that $p_0(0)=p_0(1)=0$. Arguing as above and using Lemma \ref{heat_kernel_classic}, we see that the following limit holds uniformly
$$ \textstyle
\lim_{t\downarrow 0}\int_0^1 K(r;x,y,t)p_0(y)dy=p_0(x).
$$
So it only remains to show that
\begin{equation}
\label{want_this} \textstyle
\int_0^1 K(r;x,y,t)[f_0(0)+y(1-r)f_0(1)]dy=f_0(0)+x(1-r)f_0(1).
\end{equation}
Let $l(x)=f_0(0)+x(1-r)f_0(1)$ and set $a=f_0(1)$. An explicit calculation yields
$$ \textstyle
\hat{l}(k)=\frac{i}{k}(\exp(-ik)-r)a+\frac{1}{k^2}(\exp(-ik)-1)a(1-r).
$$
We then have
\begin{align*}\textstyle
\hat{l}_0(k)[(1+r)\exp(ik)-2r]+\hat{l}_0(-k)(1-r)\exp(-ik)=&\textstyle-a\big[\frac{ri}{k}+\frac{1-r}{k^2}\big]\Upsilon(k).
\end{align*}
We can then apply the residue theorem to the representation (\ref{solution2}) to obtain (\ref{want_this}).
\end{proof}

In the case where the true density is not continuous but belongs to
 $L^p([0,1])$ for $p\geq 1$, we have the following.

\begin{proposition}
\label{cty_bdy2}
Let $1\leq p<\infty$, $f_0\in L^p([0,1])$ and $K$ be given by (\ref{kernel0}). For $t\in(0,1]$ define
\begin{align*} 
f(x,t):=&\textstyle\int_0^1K(r;x,y,t)f_0(y)dy.
\end{align*}
Then $f(\cdot,t)$ converges to $f_0$ in $L^p([0,1])$ as ${t\downarrow0}$.
\end{proposition}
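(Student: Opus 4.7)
The strategy is the standard three-line density argument: approximate $f_0$ in $L^p$ by a sufficiently well-behaved continuous function, handle the continuous case via Proposition \ref{cty_bdy}, and control the approximation error by showing that the solution operator is uniformly bounded on $L^p$.

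\textbf{Step 1 (uniform $L^p$ bound).} Writing $T_t f_0(x) := \int_0^1 K(r;x,y,t) f_0(y)\, dy$, I first show that there is a constant $C$, independent of $t\in(0,1]$, such that $\|T_t f_0\|_{L^p([0,1])} \leq C\|f_0\|_{L^p([0,1])}$. Using the decomposition (\ref{kernel1}), the kernel splits into pieces of the form $a(x,y) K_1(x-y,t)$, $b(x,y) K_1(x+y,t)$, and $t K_1'(x\pm y,t)$ with bounded coefficients. Regarding $f_0$ as $1$-periodic, the first piece is a torus convolution with $\|K_1(\cdot,t)\|_{L^1(\mathbb{T})}=1$, so Young's inequality yields an $L^p$ contraction; the second is reduced to the first by the change of variables $y\mapsto -y\pmod 1$. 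For the $tK_1'$ pieces, the Poisson summation representation of $K_1$ gives
\[
\int_0^1 |t K_1'(z,t)|\, dz \;\leq\; \frac{1}{\sqrt{2\pi t}}\int_{\mathbb{R}} |z|\,e^{-z^2/(2t)}\, dz \;=\; \sqrt{\tfrac{2t}{\pi}},
\]
so these contributions are in fact $O(\sqrt{t})$ as $t\downarrow 0$ and another application of Young's inequality handles them.

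\textbf{Step 2 (density).} The set $\mathcal{D}:=\{g\in C([0,1]):g(0)=rg(1)\}$ is dense in $L^p([0,1])$, since it already contains $C_c((0,1))$ (functions vanishing near the endpoints trivially satisfy $0=r\cdot 0$), and $C_c((0,1))$ is dense in $L^p([0,1])$ for $1\leq p<\infty$.

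\textbf{Step 3 (convergence on $\mathcal{D}$ and conclusion).} For $g\in\mathcal{D}$, Proposition \ref{cty_bdy} yields $T_t g\to g$ uniformly on $[0,1]$, hence in $L^p$. Given $\varepsilon>0$, pick $g\in\mathcal{D}$ with $\|f_0-g\|_{L^p}<\varepsilon$; then
\[
\|T_t f_0 - f_0\|_{L^p} \;\leq\; \|T_t(f_0-g)\|_{L^p} + \|T_t g - g\|_{L^p} + \|g - f_0\|_{L^p} \;\leq\; (C+1)\varepsilon + \|T_t g - g\|_{L^p},
\]
and the right-hand side can be made arbitrarily small by first choosing $\varepsilon$, then sending $t\downarrow 0$.

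The main obstacle is Step~1: the $tK_1'$ terms in (\ref{kernel1}) are not obviously harmless, and controlling them requires the explicit Gaussian representation of $K_1$ together with the $\sqrt{t}$ decay estimate above. Everything else follows from Young's inequality on the torus, Proposition \ref{cty_bdy}, and a standard $\varepsilon$--approximation argument.
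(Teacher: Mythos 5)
Your argument is correct, but it takes a different route from the paper. The paper's proof works directly with the decomposition \eqref{kernel1}: it observes that the non-convolution parts involve only $f_0(y)$ and $yf_0(y)$, both of which lie in $L^p([0,1])$, so the classical $r=1$ (periodic heat kernel) $L^p$-convergence applies to each piece after the change of variables $y\mapsto 1-y$ for the $K_1(x+y,t)$ term, while the terms carrying a factor of $t$ are dismissed by Lemma \ref{no_t} (a Riemann--Lebesgue-type estimate on the Fourier coefficients, valid since $f_0\in L^1$ by H\"older). You instead prove a uniform operator bound $\sup_{t\in(0,1]}\|T_t\|_{L^p\to L^p}\leq C$ via Young's inequality on the torus, using the quantitative estimate $\|tK_1'(\cdot,t)\|_{L^1(\mathbb{T})}\leq\sqrt{2t/\pi}$ (which is correct and in fact slightly sharper than what Lemma \ref{no_t} gives, since it is an operator-norm $\mathcal{O}(\sqrt t)$ statement), and then transfer convergence from the dense class $C_c((0,1))\subset\{g\in C([0,1]):g(0)=rg(1)\}$, where Proposition \ref{cty_bdy} provides uniform convergence, by the standard $\varepsilon/3$ argument. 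The paper's route is shorter and needs no uniform bound on the whole family $\{T_t\}$, only the known periodic case applied to two fixed $L^p$ functions; your route leans on Proposition \ref{cty_bdy} for the continuous case but yields the additional structural fact that the estimators are uniformly bounded on $L^p$ for $t\in(0,1]$, which is of independent interest and makes the argument robust to perturbations of the kernel. One cosmetic point: the first piece of the kernel is not literally a convolution because of the bounded factor $1+(x-y)\tfrac{1-r}{1+r}$, so you get a bound with constant $\|a\|_\infty$ rather than a contraction; this does not affect your argument, which only needs uniform boundedness.
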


\begin{proof}
Note that the case $r=1$ is well-known. The fact that $f_0 \in L^1([0,1])$ by H{\"o}lder's inequality together with Lemma \ref{no_t} show that we can ignore the parts multiplied by $t$ in the kernel representations \eqref{kernel1}. The fact that $yf_0(y)\in L^p([0,1])$ implies the convergence by simply summing the parts in \eqref{kernel1} and using the $r=1$ case with a change of variable for the $K_1(x+y,t)$ part.
\end{proof}

\begin{proof}[Proof of Theorem \ref{thm:boundary consistency}]
We have that
$$
\mathbb{E}_{f_X}(f(x,t))=\frac{1}{n}\sum_{k=1}^n\int_0^1 K(r;x,y,t)dy=\int_0^1 K(r;x,y,t)dy.
$$
The first part of the theorem therefore follows from Proposition \ref{cty_bdy}. For the second part, assume that $f_X\in C^1([0,1])$ and $x_t=x+\mathcal{O}(\sqrt{t})$. The relation (\ref{missing_piece}) implies the result since we have that
$$
\int_0^t\left|K_1(x,s)\right|ds\leq C\sqrt{t}
$$
for some $C$ independent of $x$ and $\left|f_X(x)-f_X(x_t)\right|=\mathcal{O}(\sqrt{t})$.
\end{proof}

\subsection{Proof of Proposition \ref{max_prin}}
\label{ndprf}

\begin{proof}[Proof of Proposition \ref{max_prin}]
We first show that in this case the solution is continuous on $[0,1]\times[0,T)$ for any $T\in(0,\infty]$. The case of continuity at points $t>0$ has already been discussed so suppose that $(x_n,t_n)\rightarrow(x,0)$ then
$$
\left|f(x_n,t_n)-f_0(x)\right|\leq \left|f_0(x_n)-f_0(x)\right|+\left|f(x_n,t_n)-f_0(x_n)\right|.
$$
The first term converges to zero by continuity of $f_0$ whilst the second term converges to zero by the proven uniform convergence as $t\downarrow0$. Using the limit given by Proposition \ref{large_time}, we will take $T=\infty$ without loss of generality.

Since the solution is regular in the interior and continuous on the closure, this immediately means that we can apply the maximum principle to deduce that
$$
\sup_{(x,t)\in\overline{\Omega}}f(x,t)=\sup_{(x,t)\in\partial{}\Omega}f(x,t),
$$
where $\Omega=(0,1)\times (0,T)$. A similar result holds for the infinum. Evaluating (\ref{series1}) at $x=0$ leads to
$$ \textstyle
f(0,t)=\frac{2r}{1+r}\sum_{n\in\mathbb{Z}}{\exp(-k_n^2 t/2)}\hat{f}_0(k_n)=\frac{r\sqrt{2}}{(1+r)\sqrt{\pi t}}\int_{-\infty}^\infty \exp(-x^2/(2t))f_0(y)dy,
$$
where we have used the function $K_1$ defined by (\ref{f_def}) and extended $f_0$ periodically (values at the endpoints contributed nothing). Hence 
$$ \textstyle
\frac{2ar}{1+r}\leq f(0,t)\leq\frac{2br}{1+r}.
$$
Similar calculations yield
$$ \textstyle
\frac{2a}{1+r}\leq f(1,t)\leq\frac{2b}{1+r}.
$$
The fact $\max\{2r/(1+r),2/(1+r)\}\geq 1$ (recall $r\geq 0$) finishes the proof.
\end{proof}

\subsection{Proof of Theorem \ref{cons_thm}}
\label{ndprf2}
\begin{proof}[Proof of Theorem \ref{cons_thm}]
We have that
$$
\int_0^1f(x,t)dx=\int_0^1\int_0^1K(r;x,y,t)df_0(y)dx=\int_0^1\int_0^1K(r;x,y,t)dxdf_0(y)
$$
by Fubini's theorem. Using the series representation (\ref{kernel1}) and integrating term by term (justified due to the exponential decaying factors) we have
$$
\int_0^1K(r;x,y,t)dx=1+\frac{1-r}{1+r}\sum_{n\in\mathbb{Z}}\exp(-k_n^2t/2)\int_0^1\big[x\exp(ik_n(x-y))+(x-1)\exp(ik_n(x+y))\big]dx.
$$
All other terms vanish since the integral of $\exp(ik_nx)$ is $0$ unless $n=0$. We can change variables for the second term in the integrand to see that the above is equal to
\begin{align*}
&1+\frac{1-r}{1+r}\sum_{n\in\mathbb{Z}}\exp(-k_n^2t/2)\int_0^1\big[x\exp(ik_n(x-y))-x\exp(-ik_n(x-y))\big]dx\\
=&1+\frac{1-r}{1+r}\sum_{n\in\mathbb{Z}}\exp(-k_n^2t/2)\int_0^12ix\sin(k_n(x-y))dx=1,
\end{align*}
where we have used the fact that $\sin(k_n(x-y))$ is odd in $k_n$ and $\exp(-k_n^2t/2)$ is even in the last equality. Since $f_0$ is a probability measure, it follows that $\int_0^1f(x,t)dx=1$, i.e. part (1) holds.

We next show that the integral kernel $K(r;x,y,t)$ is non-negative for $r\geq0,t>0$ and $x,y\in[0,1]$. Suppose this were false for some $(x_0,y_0)\in[0,1]^2$. The Poisson summation formula gives
$$
K(r;0,y,t)=\frac{2r}{1+r}K_1(y,t)>0,\quad K(r;1,y,t)=\frac{2}{1+r}K_1(y,t)>0,
$$
and hence $(x_0,y_0)\in(0,1)^2$. Choose $f_{0}=u_n$ that integrates to $1$ where $u_n(y)\geq 0$ and $u_n(y)=0$ unless $\|y-y_0\|\leq 1/n$. Then for large $n$, $u_n$ satisfies the required boundary conditions (vanishes in a neighborhood of the endpoints) and we must have that
$$
f_n(x_0,t):=\int_0^1K(r;x_0,y,t)u_n(y)dy\geq 0,
$$
by Proposition \ref{max_prin}. But it clearly holds by continuity of the integral kernel that $\lim_{n\rightarrow\infty}f_n(x_0,t)=K(r;x_0,y_0,t)<0$, a contradiction. This proves part (2) of the theorem.
\end{proof}

\section{Proof of Theorem \ref{AMISE_1}}
\label{AMISE_proof}

\begin{proof}
We begin with the proof of 1. Recall that
$$ \textstyle
\mathrm{Var}_{f_X}[f(x,t)]=\frac{\mathbb{E}_{f_X}[K(x,Y,t)^2]}{n}-\frac{\mathbb{E}_{f_X}[K(x,Y,t)]^2}{n},
$$
where $K$ is the kernel given by (\ref{kernel1}). The second of these terms is bounded by a constant multiple of $1/n$ so we consider the first. Recall the decomposition (\ref{kernel1}):
\begin{equation*}
\begin{split} \textstyle
K(r;x,y,t)&= \textstyle K_1(x-y,t)\big[1+(x-y)\frac{1-r}{1+r}\big]+K_1(x+y,t)(x+y-1)\frac{1-r}{1+r}\\
&\quad\quad\quad\quad+\frac{t(1-r)}{1+r}\big[K_1'(x+y,t)+K_1'(x-y,t)\big],
\end{split}
\end{equation*}
where $K_1$ is the standard periodic heat kernel. For $x,y\in[0,1]$ we have that
\begin{align*}
K_1(x-y,t)&\textstyle\sim_{t\downarrow0}\frac{1}{\sqrt{2\pi t}}\big[\exp\big(-\frac{(x-y)^2}{2t}\big)+\exp\big(-\frac{(x-y-1)^2}{2t}\big)+\exp\big(-\frac{(x-y+1)^2}{2t}\big)\big]\\
K_1(x+y,t)&\textstyle\sim_{t\downarrow0}\frac{1}{\sqrt{2\pi t}}\big[\exp\big(-\frac{(x+y)^2}{2t}\big)+\exp\big(-\frac{(x+y-1)^2}{2t}\big)+\exp\big(-\frac{(x+y-2)^2}{2t}\big)\big],
\end{align*}
with the rest of the expansion exponentially small as $t\downarrow0$ and the asymptotics valid upon taking derivatives. Using this, it is straightforward to show that we can write
$$ \textstyle
K(r;x,y,t)=\frac{1}{\sqrt{2\pi t}}G(r;x,y,t),
$$
where $G$ is bounded. From the above asymptotic expansions, we can write
\begin{align*}
G(r;x,y,t)&=\textstyle\exp\Big(-\frac{(x-y)^2}{2t}\Big)+h_1(x,y,t)\exp\Big(-\frac{(x-y-1)^2}{2t}\Big)\\
&\quad \textstyle +h_2(x,y,t)\exp\Big(-\frac{(x-y+1)^2}{2t}\Big)+h_3(x,y,t)\exp\Big(-\frac{(x+y)^2}{2t}\Big)\\
&\quad\quad \textstyle+h_4(x,y,t)\exp\Big(-\frac{(x+y-2)^2}{2t}\Big)+E(x,y,t),
\end{align*}
where the $h_i$ are bounded and the error term $E(x,y,t)$ is exponentially small as $t\downarrow0$ uniformly in $x,y$. Furthermore, we have 
$$ \textstyle
\lim_{t\downarrow 0}\int_0^1\int_0^1f_X(y)K(r;x,y,t)h_1(x,y,t)\exp\Big(-\frac{(x-y-1)^2}{2t}\Big)dydx=0
$$
by the dominated convergence theorem (by considering the inner integral as a function of $x$). Similar results hold for the other $h_i$ multiplied by their relative Gaussian functions. Similarly, we have
$$ \textstyle
\lim_{t\downarrow 0}\frac{1}{\sqrt{t}}\int_0^1\int_0^1\exp\Big(-\frac{(x-y)^2}{2t}\Big)f_X(y)h_1(x,y,t)\exp\Big(-\frac{(x-y-1)^2}{2t}\Big)dydx=0
$$
and likewise for the other $h_i$ multiplied by their relative Gaussian functions. The integral
$$ \textstyle
\frac{1}{\sqrt{\pi t}}\int_0^1f_X(y)\exp\Big(-\frac{(x-y)^2}{t}\Big)dy
$$
is bounded and converges pointwise for almost all $x\in[0,1]$ to $f_X(x)$. It follows that
\begin{equation}\textstyle
\label{Var_asym2}
\int_0^1\frac{\mathbb{E}_{f_X}[K(x,Y,t)^2]}{n}dx=\frac{1}{2n\pi t}\int_0^1\int_0^1f_X(y)\exp\Big(-\frac{(x-y)^2}{t}\Big)dydx+\underline{\text{o}}(\frac{1}{n\sqrt{t}}).
\end{equation}
The rate (\ref{Var_asym}) now follows.

We now prove 2 and 3. Define the function $p_0(x)$ via $f_X(x)=f_X(0)+x(1-r)f_X(1)+p_0(x)$, then the proof of Proposition \ref{cty_bdy} showed that
\begin{equation}
\label{expectation_rep}
\mathbb{E}_{f_X}[f(x,t)]-f_X(x)=\int_0^1K(r;x,y,t)[p_0(y)-p_0(x)]dy.
\end{equation}
Define the function
\begin{equation}
\label{aux_function}
w(x,y)=p_0(y)-p_0(x)+(x-y)\frac{1-r}{1+r}(p_0(y)+p_0(1-y)),
\end{equation}
then (\ref{expectation_rep}) and (\ref{kernel1}) imply that $\mathbb{E}_{f_X}[f(x,t)]-f_X(x)$ is equal to
\begin{equation}
\label{expectation_rep2}
\textstyle
\int_0^1K_1(x-y,t)w(x,y)dy+\frac{t(1-r)}{1+r}\int_0^1K_1(x-y,t)[p_0'(y)-p_0'(1-y)]dy,
\end{equation}
where we have integrated by parts for the last term and used $p_0(0)=p_0(1)=0$. Define the function
\begin{equation} \textstyle
F(x,t)=\int_0^1K_1(x-y,t)w(x,y)dy,
\end{equation}
then taking the partial derivative with respect to time, integrating by parts and using $w(x,0)=w(x,1)=0$ we have
\begin{equation}\textstyle
\frac{\partial F}{\partial t}(x,t)=K_1(x,t)[p_0'(1)-p_0'(0)]\frac{rx-x-r}{1+r}+\frac{1}{2}\int_0^1K_1(x-y,t)\frac{\partial^2w}{\partial y^2}(x,y)dy.
\end{equation}
First we assume that $p_0'(0)\neq p_0'(1)$. In this case the above shows that
\begin{equation}\textstyle
\mathbb{E}_{f_X}[f(x,t)]-f_X(x)=\frac{rx-x-r}{1+r}[p_0'(1)-p_0'(0)]\int_0^tK_1(x,s)ds +\mathcal{O}(t),
\label{missing_piece}
\end{equation}
where the $\mathcal{O}(t)$ is uniform in $x$. Using the above asymptotics for $K_1(x,t)$ and the dominated convergence theorem, it follows that
\begin{equation}
\begin{split}
&\textstyle \int_0^1\big\{\mathbb{E}_{f_X}[f(x,t)]-f_X(x)\big\}^2dx \\
&\sim_{t\downarrow0} \textstyle\frac{[p_0'(1)-p_0'(0)]^2}{2\pi(1+r)^2}\int_0^1(rx-x-r)^2\Big[\int_0^t\frac{\exp\big(-\frac{x^2}{2s}\big)}{\sqrt{s}}+\frac{\exp\big(-\frac{(x-1)^2}{2s}\big)}{\sqrt{s}}ds\Big]^2dx.
\end{split}
\end{equation}
Let $\tau(x)=\exp(-x^2)-\sqrt{\pi}\left|x\right|\mathrm{erfc}(\left|x\right|)$, then we can perform the integral in the square brackets in terms of $\tau$ to yield
\begin{align}
&\textstyle\int_0^1\big\{\mathbb{E}_{f_X}[f(x,t)]-f_X(x)\big\}^2dx\\
&\textstyle\sim_{t\downarrow0}t\Big\{\frac{2[p_0'(1)-p_0'(0)]^2}{\pi(1+r)^2}\int_0^1(rx-x-r)^2\big[\tau(\frac{x}{\sqrt{2t}})+\tau(\frac{x-1}{\sqrt{2t}})\big]^2dx\Big\}\\
&\textstyle\sim_{t\downarrow0}t^{3/2}\Big\{\frac{2[p_0'(1)-p_0'(0)]^2}{\pi(1+r)^2}(r^2+1)\sqrt{2}\int_0^\infty\tau(y)^2dy\Big\}.
\end{align}
To finish the proof in this case, we have that
$$ \textstyle
\int_0^\infty\tau(y)^2dy=\frac{1}{3}(\sqrt{2}-1)\sqrt{\pi}.
$$
Next suppose that $p_0'(0)= p_0'(1)$. 
In this case we have
\begin{align}\textstyle
\frac{\partial F}{\partial t}(x,t) & = \textstyle\frac{1}{2} \int_0^1 K_1(x-y,t) \frac{\partial^2w}{\partial y^2}(x,y)dy \\ 
& = \textstyle\frac{1}{2}\frac{\partial^2 w}{\partial y^2}(x,x)+U(x,t),
\end{align}
for some bounded function $U(x,t)$ which converges to $0$ as $t\downarrow0$ for almost all $x\in[0,1]$. It follows that
\begin{equation}\textstyle
F(x,t)=t\frac{1}{2}\frac{\partial^2w}{\partial y^2}(x,x)+t\tilde{F}(x,t),
\end{equation}
for some bounded function $\tilde{F}(x,t)$ which converges to $0$ as $t\downarrow0$ for almost all $x\in[0,1]$. Similarly, we have
\begin{equation}\textstyle
\frac{t(1-r)}{1+r}\int_0^1K_1(x-y,t)[p_0'(y)-p_0'(1-y)]dy=\frac{t(1-r)}{1+r}[p_0'(x)-p_0'(1-x)]+tV(x,t),
\end{equation}
for some bounded function $V(x,t)$ which converges to $0$ as $t\downarrow0$ for almost all $x\in[0,1]$. It follows from (\ref{expectation_rep2}) that
\begin{equation}\textstyle
\label{expectation_rep3}
\mathbb{E}_{f_X}[f(x,t)]-f_X(x)=t\Big\{\frac{1}{2}\frac{\partial^2w}{\partial y^2}(x,x)+\frac{(1-r)}{1+r}[p_0'(x)-p_0'(1-x)]\Big\}+tW(x,t),
\end{equation}
for some bounded function $W(x,t)$ which converges to $0$ as $t\downarrow0$ for almost all $x\in[0,1]$. But we have
$$ \textstyle
\frac{\partial^2w}{\partial y^2}(x,x)=f_X^{''}(x)-2\frac{1-r}{1+r}[p_0'(x)-p_0'(1-x)].
$$
The dominated convergence theorem then implies that
\begin{equation}\textstyle
\label{expectation_rep4}
\int_0^1\{\mathbb{E}_{f_X}[f(x,t)]-f_X(x)\}^2dx\sim_{t\downarrow0}t^2\int_0^1\frac{1}{4}[f_X^{''}(x)]^2dx.
\end{equation}
\end{proof}

\section{Four Corners Matrix and Proof of Theorem~\ref{thm:discrete:model:converges}}
\label{4 corners matrix}

The `Four Corners Matrix' \eqref{eq:four:corners:matrix}, is a \textit{non-symmetric} example of a `tridiagonal Toeplitz matrix with four perturbed corners' \cite{yueh2008explicit,StrMac14}. Although we do not pursue it further, one can also show (by extending the techniques of \cite{StrMac14}) that all functions of \eqref{eq:four:corners:matrix} are the sum of (i) a Toeplitz part, which can be thought of as the solution without boundary conditions; and (ii) a Hankel part, which is precisely the correction due to the boundary conditions. Exact and explicit formulas for the eigenvalues and eigenvectors are available and we will use these to prove Theorem~\ref{thm:discrete:model:converges}
 
There is a unique zero eigenvalue, corresponding to the stationary density as $t \rightarrow \infty$. The stationary density is an affine function in the continuous PDE setting. In the discrete setting the components of the stationary eigenvector $\bm{v}$ are equally-spaced, in the sense that $\forall i,j,  \; \; v_{i} - v_{i+1} =  v_{j} - v_{j+1}  = \textrm{constant}$. All non-zero eigenvalues of $\m A$ are positive and we are in the setting of \cite[Theorem 3.2 (i)]{yueh2008explicit}. 
In the case that $r\neq1 $, we can group the spectral data into two classes with eigenvalues:
\[
\lambda_k = 2- 2 \cos \theta_k , \qquad k = 1,\ldots, m,
\]
where
\[
\theta_k =
\begin{cases}
k \frac{2 \pi }{m}  &  \mbox{     if } 1 \le k \le \floor*{\frac{m-1}{2}} \\
 (k - \floor*{\frac{m-1}{2}}  - 1) \frac{2 \pi}{m+1}  & \mbox{     if } \floor*{\frac{m-1}{2}} +1\le k \le m.
\end{cases}
\]
The zero eigenvalue, when $k=\floor*{\frac{m-1}{2}} +1$, has already been discussed.
Other eigenvalues correspond to eigenvectors with components (listed via subscripts)
\begin{equation}
\begin{cases}
v^k_j= r\sin((j-1)\theta_k)-\sin(j\theta_k)  &  \mbox{     if } 1 \le k \le \floor*{\frac{m-1}{2}} \\
w^{k-\floor{\frac{m-1}{2}} -1}_j=\sin(j\theta_k)  & \mbox{     if } \floor*{\frac{m-1}{2}} +2\le k \le m.
\end{cases}
\label{eq:eigen:vector:formulae}
\end{equation}

Some properties of the discrete model and its links to the continuous model are:
\begin{itemize}
\item {All eigenvalues of the Four Corners Matrix are purely real.} 
Also, the eigenvalues of the operator in the continuous model are likewise purely real. This is perhaps surprising since the matrix is not symmetric, and the operator is not self-adjoint.
\item The Four Corners Matrix $\m A$ is diagonalizable. In contrast, the operator for the continuous PDE is not diagonalizable, and instead, the analogy of the Jordan Normal Form from linear algebra applies to the operator. Despite this, the following still hold:
\begin{enumerate}
	\item The eigenvalues of the discrete model matrix $\m A$ converge to that of the continuous model (including algebraic multiplicities). This holds, for example, in the Attouch--Wets topology - the convergence is locally uniform.
	\item The eigenvectors converge to the generalized eigenfunctions of the continuous operator. Letting $j=\floor{(m+1)x}$ we have ($k\neq0$)
	\begin{align*}
	&\lim_{m\rightarrow\infty} w^k_j= \sin(2\pi kx)\\
	&\lim_{m\rightarrow\infty}\frac{m}{4\pi^2k^2}\big[(r-1)w^k_j-v^k_j\big]=\phi_k(x).
	\end{align*}
\end{enumerate}
\end{itemize}

We prove Theorem~\ref{thm:discrete:model:converges} by invoking the celebrated Lax Equivalence Theorem, which states that `stability and consistency implies convergence' \cite{richtmyer1994difference}. We will take consistency for granted. Typically when proofs in the literature use the Lax Equivalence Theorem, it is also taken for granted that the PDE is well-posed. Fortunately, we have already established that the PDE is indeed well-posed in Theorem~\ref{wk_thm}. It remains only to show stability. Even though the matrix $\m A$ has non-negative eigenvalues, this does not immediately imply stability of the backward Euler method since $\m A$ is not normal, i.e. $\m A$ does not commute with $\m A^*$. We establish stability for our problem by showing that bounds for the continuous model in Proposition \ref{max_prin} have corresponding bounds in the discrete model as follows, where we use a subscript $l^p$ to denote the operator $l^p$ norm. In particular, Lemma~\ref{thm:discrete_bound} shows the discrete model is \textit{stable} in the maximum norm. Convergence then follows from the Lax Theorem.

\begin{lemma}[Stability and Well-posedness of Discrete Approximation]
\label{thm:discrete_bound}
Let $m\geq 2$, then the backward Euler method \eqref{backwards_euler} preserves probability vectors and satisfies the bound
\begin{equation*}
\left\|\left(\m I +\m A\right)^{-K}\right\|_{l^\infty}\leq\max\Big\{\frac{2r}{1+r},\frac{2}{1+r}\Big\},\quad \forall K\in\mathbb{N}.
\end{equation*}
 \end{lemma}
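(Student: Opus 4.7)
I plan to establish the two assertions separately. For probability preservation, I show that \((\m{I} + \m{A})^{-1}\) has non-negative entries and unit column sums. The matrix \(\m{I} + \m{A}\) is a \(Z\)-matrix (non-positive off-diagonals, positive diagonals) whose eigenvalues \(1+\lambda_k \geq 1 > 0\), using the non-negativity of the eigenvalues of \(\m{A}\) recalled earlier in this section; hence it is a non-singular \(M\)-matrix and \((\m{I} + \m{A})^{-1}\) is entry-wise non-negative. The zero column-sum property of \(\m{A}\) gives \(\m{1}^T (\m{I} + \m{A})^{-1} = \m{1}^T\), so the columns of \((\m{I} + \m{A})^{-1}\) sum to one. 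A non-negative matrix with unit column sums maps probability vectors to probability vectors, and this is inherited under iteration.

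For the operator bound, entry-wise non-negativity of \((\m{I} + \m{A})^{-K}\) gives \(\|(\m{I} + \m{A})^{-K}\|_{l^\infty} = \|\v{u}^K\|_\infty\) where \(\v{u}^K := (\m{I} + \m{A})^{-K} \m{1}\); the task therefore reduces to proving \(\|\v{u}^K\|_\infty \leq M := \max\{\frac{2r}{r+1},\frac{2}{r+1}\}\). Extending \(\v{u}^k\) to the ghost nodes via \eqref{eq:def:ghost:nodes}, the backward Euler update takes the uniform interior form \(u^{k+1}_j = \frac{1}{3}(u^k_j + u^{k+1}_{j-1} + u^{k+1}_{j+1})\) for \(j = 1,\dots,m\). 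The crux of the argument is a symmetry identity: setting \(\sigma^k_j := u^k_j + u^k_{m+1-j}\), adding the updates at indices \(j\) and \(m+1-j\), and using the ghost-sum relation \(u^{k+1}_0 + u^{k+1}_{m+1} = u^{k+1}_1 + u^{k+1}_m\) (immediate from \eqref{eq:def:ghost:nodes}), one finds that \(\sigma^k\) itself evolves by backward Euler for the discrete heat equation with homogeneous Neumann boundary conditions. Since \(\sigma^0 = 2 \m{1}\) is a stationary constant for that Neumann problem and its backward Euler step matrix is invertible, uniqueness forces \(\sigma^k \equiv 2 \m{1}\); the ghost values are thus pinned at \(u^k_0 = \frac{2r}{r+1}\) and \(u^k_{m+1} = \frac{2}{r+1}\) for every \(k\), both at most \(M\).

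The bound then follows by a discrete maximum principle. Inductively assume \(\max_j u^k_j \leq M\); the base case uses \(\v{u}^0 = \m{1}\) and \(M \geq 1\). Suppose for contradiction that \(\max_j u^{k+1}_j > M\), attained at some argmax \(j^* \in \{1,\dots,m\}\). Then \(u^k_{j^*} \leq M < u^{k+1}_{j^*}\) strictly, while each neighbour \(u^{k+1}_{j^*\pm 1}\)—whether an interior value bounded by \(u^{k+1}_{j^*}\), or a pinned ghost value bounded by \(M < u^{k+1}_{j^*}\)—is at most \(u^{k+1}_{j^*}\); the interior update then produces the contradiction \(u^{k+1}_{j^*} < u^{k+1}_{j^*}\). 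The main obstacle is the symmetry reduction: without the Neumann auxiliary evolution for \(\sigma^k\), a naive maximum principle at boundary argmaxes admits growth factors of the form \((r+1)/(2\min(r,1))\) per step (which exceed one for \(r \neq 1\)); it is precisely the identity \(\sigma^k \equiv 2 \m{1}\) that controls the ghost nodes uniformly in \(k\) and closes the argument at the sharp constant \(M\).
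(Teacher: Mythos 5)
Your proposal is correct, and it reaches the stated bound by a genuinely different route than the paper's proof. The first reduction is shared: like the paper, you exploit entrywise non-negativity of $(\m I+\m A)^{-1}$ and the zero column sums of $\m A$ to get probability preservation, and you reduce the operator bound to bounding $(\m I+\m A)^{-K}\mathbbm{1}$ (the paper does this via the comparison $\v Q^K\pm\v q^K\geq 0$; your max-row-sum identity for non-negative matrices is the same device, and your M-matrix/diagonal-dominance justification of non-negativity replaces the paper's direct argmin contradiction on the implicit step). The divergence is in how the bound on $(\m I+\m A)^{-K}\mathbbm{1}$ is obtained. The paper expands $\mathbbm{1}$ in the explicit eigenvectors of the four-corners matrix \eqref{eq:four:corners:matrix} via a discrete Fourier computation, evaluates $Q^K_1$ and $Q^K_m$ in closed form, uses their monotonicity in $K$ together with a discrete maximum principle, and thereby gets the sharper $m$-dependent bound $\max\bigl\{\tfrac{2(1+rm)}{(m+1)(1+r)},\tfrac{2(m+r)}{(m+1)(1+r)}\bigr\}$, whose supremum over $m$ is the constant in the lemma. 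You instead observe that the symmetrization $\sigma^k_j=u^k_j+u^k_{m+1-j}$, thanks to the ghost-sum identity $u_0+u_{m+1}=u_1+u_m$ from \eqref{eq:def:ghost:nodes}, obeys a backward Euler step for the Neumann discrete Laplacian, whose invertible step matrix fixes the constant vector; hence $\sigma^k\equiv 2$, the ghost values are pinned at $\tfrac{2r}{1+r}$ and $\tfrac{2}{1+r}$ for every $k$, and a discrete maximum principle on the uniform interior update $u^{k+1}_j=\tfrac13(u^k_j+u^{k+1}_{j-1}+u^{k+1}_{j+1})$ closes the induction at the stated constant. I verified the key identity (e.g.\ for $m=2$ one computes $u^1_1+u^1_2=2$ exactly), and your strict-inequality argmax argument is sound. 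What each approach buys: yours is more elementary, needs no explicit eigenvectors, and is precisely the discrete analogue of the continuous argument in Proposition \ref{max_prin} (where $f(x,t)+f(1-x,t)\equiv 2$ for constant data would pin $f(0,t)=\tfrac{2r}{1+r}$, $f(1,t)=\tfrac{2}{1+r}$); the paper's spectral route is heavier but yields the sharper $m$-dependent constant and explicit monotone formulas for the extreme components, which is extra quantitative information your argument does not provide.
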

As a result of this lemma, we also gain stability in any $p$--norm via interpolation.

\begin{proof}[Proof of Lemma \ref{thm:discrete_bound}]
Since the sums of each column $\m A$ are zero, it follows that
$$
\sum_{j=1}^mu_j^{k+1}=\sum_{j=1}^mu_j^{k}.
$$
Hence to prove the first part, it suffices to show that $\v u^{k+1}$ is non-negative if $\v u^{k}$ is. Suppose this were false, and let $j\in\{1,..,m\}$ be such that $u^{k+1}_j<0$ is the smallest component of $\v u^{k+1}$. We have that
$$
u_j^{k}=(1+\m A_{jj})u_j^{k+1}+\sum_{l\neq j}\m A_{j,l}u_l^{k+1}.
$$
By choice of $j$ and the fact that $\m A_{jj}$ is positive, the off-diagonals of $\m A$ are negative and the sum of the $j$th column of $\m A$ is zero, it follows that
$$
\m A_{jj}u_j^{k+1}+\sum_{l\neq j}\m A_{j,l}u_l^{k+1}\leq 0.
$$
But this then implies that $u_j^{k}\leq u_j^{k+1}$, the required contradiction.

To prove the second part, let $\v u\in\mathbb{R}_{\geq0}^m$ be any initial vector with $\|\v u\|_{\infty}\leq 1$ and let $\mathbbm{1}$ denote the vector with $1$ in all entries. The eigenvector in the kernel of $\m A$ is a linear multiple of $w^0$ defined by
$$ \textstyle
w^0_j=1+\frac{1-r}{1+rm}(j-1).
$$
Define the vector $x$ via $x(1-r)/(1+r)= \mathbbm{1} -2(1+rm)/[(m+1)(r+1)]w^0$. 
This has components
$$ \textstyle
x_j=\frac{m+1-2j}{m+1}.
$$
Extend this vector to have $x_0=0$, then an application of the discrete Fourier transform implies that we can write for $j\neq 0$
$$\textstyle
x_j=\frac{1}{m+1}\sum_{k=1}^mG_m(k)\exp\left(\frac{2\pi ikj}{m+1}\right),
$$
where
$$ \textstyle
G_m(k)=\frac{\exp(4\pi ik/(m+1))-1}{(\exp(2\pi ik/(m+1))-1)^2}=\frac{-i}{2}\frac{\sin(2\pi k/(m+1))}{\sin^2(k\pi/(m+1))}.
$$
Hence we have that
\begin{align*}
x_j&= \textstyle\frac{1}{m+1}\sum_{k=1}^{m-\floor{\frac{m-1}{2}}-1}\left[G_m(k)\exp\left(\frac{2\pi ikj}{m+1}\right)-\overline{G_m(k)}\exp\left(-\frac{2\pi ikj}{m+1}\right)\right]\\
&=\textstyle\frac{1}{m+1}\sum_{k=1}^{m-\floor{\frac{m-1}{2}}-1}\frac{\sin(2\pi k/(m+1))}{\sin^2(k\pi/(m+1))}\sin\left(\frac{2\pi kj}{m+1}\right).
\end{align*}
This implies that we can write $1$ as a linear combination of eigenvectors:
$$ \textstyle
1=\frac{2(1+rm)w^0_j}{(m+1)(1+r)}+\frac{1-r}{(m+1)(1+r)}\sum_{k=1}^{m-\floor{\frac{m-1}{2}}-1}\frac{\sin(2\pi k/(m+1))}{\sin^2(k\pi/(m+1))}w^k_j.
$$
Define $\v Q^K=\left(\m I+\m A\right)^{-K}\mathbbm{1}$, and $\v q^K=\left(\m I+\m A\right)^{-K}\v u$. In particular, using the eigenvalue decomposition we have
\begin{align*}
Q_j^K&=\textstyle\frac{2(1+rm)w^0_j}{(m+1)(1+r)}\\
& +\frac{1-r}{(m+1)(1+r)}\textstyle \sum_{k=1}^{m-\floor{\frac{m-1}{2}}-1}\frac{\sin(2\pi k/(m+1))}{\sin^2(k\pi/(m+1))}\sin\left(\frac{2\pi kj}{m+1}\right)\left(3-2\cos\left(\frac{2\pi k}{m+1}\right)\right)^{-K}.
\end{align*}
Using similar arguments to the first part of the proof, it is easy to prove the \textbf{Discrete Maximum Principle}:
$$
\sup_{K\in\mathbb{N}\cup\{0\}}\left\|\v Q^K\right\|_{\infty}=\max\left\{\sup_{K\in\mathbb{N}\cup\{0\}}Q_1^K,\sup_{K\in\mathbb{N}\cup\{0\}}Q_m^K,1\right\}.
$$
Explicitly, we have that
\begin{align*}
Q_1^K&=\textstyle\frac{2(1+rm)}{(m+1)(1+r)} \\
& +\frac{1-r}{(m+1)(1+r)} \textstyle\sum_{k=1}^{m-\floor{\frac{m-1}{2}}-1}\frac{\sin^2(2\pi k/(m+1))}{\sin^2(k\pi/(m+1))}\left(3-2\cos\left(\frac{2\pi k}{m+1}\right)\right)^{-K}.
\end{align*}
This is monotonic in $K$ with limit $2(1+rm)/[(m+1)(1+r)]$. 
Similarly, we have 
\begin{align*}
Q_m^K&=\textstyle\frac{2(m+r)}{(m+1)(1+r)}\\
& -\frac{1-r}{(m+1)(1+r)} \textstyle\sum_{k=1}^{n-\floor{\frac{m-1}{2}}-1}\frac{\sin^2(2\pi k/(m+1))}{\sin^2(k\pi/(m+1))}\left(3-2\cos\left(\frac{2\pi k}{m+1}\right)\right)^{-K},
\end{align*}
which is monotonic in $K$ with limit $2(m+r)/[(m+1)(1+r)]$. Now, we must have that each entry of $\v Q^K\pm \v q^K$ is non-negative since this is true for $K=0$. It follows that
$$
\left\|\v q^K\right\|_{\infty}\leq \left\|\v Q^K\right\|_{\infty}=\max\left\{\frac{2(1+rm)}{(m+1)(1+r)},\frac{2(m+r)}{(m+1)(1+r)}\right\}.
$$
Since the $l^\infty$ operator norm of a real matrix is independent of whether the underlying field is $\mathbb{R}$ or $\mathbb{C}$, the lemma now follows by taking suprema over $m$.
\end{proof}

\bibliographystyle{plain} 
\bibliography{boundary}
\end{document}